\documentclass[12pt]{extarticle}
\pretolerance10000
\usepackage{amsfonts,amsmath}
\usepackage{amssymb}
\usepackage{latexsym}
\usepackage{epsfig}
\usepackage{graphicx,color,graphics}
\usepackage{caption}
\usepackage{subcaption}
\usepackage{xcolor}
\usepackage{tcolorbox,color}
\usepackage{lineno}
\usepackage{pgf,tikz}
\usepackage{mathrsfs}
\usepackage{multicol}
\usepackage{enumitem}
\usepackage{mdwlist}
\usepackage{dsfont}
\usetikzlibrary{arrows}
\usetikzlibrary{patterns}
\setcounter{MaxMatrixCols}{10}
\setlength{\oddsidemargin}{-0.4432mm}
\setlength{\evensidemargin}{-0.04432mm}
\setlength{\topmargin}{-20mm} \setlength{\textheight}{23 cm}
\setlength{\textwidth}{16cm}

\newtheorem{lemma}{Lemma}[section]

\newtheorem{theorem}[lemma]{Theorem}
\newtheorem{proposition}[lemma]{Proposition}
\newtheorem{remark}[lemma]{Remark}

\newcommand{\N}{\ifmmode{{\Bbb N}}\else{\mbox{${\Bbb N}$}}\fi}
\newcommand{\R}{\ifmmode{{\Bbb R}}\else{\mbox{${\Bbb R}$}}\fi}

\providecommand{\inte}[2]{\overset{#1}{\underset{#2}{\int}}}

	\begin{document}
		\title{On the regularity for thermoelastic systems of phase-lag parabolic type}
\author{Jaime Mu\~noz Rivera$^{1,}$\footnote{jemunozrivera@gmail.com}, \ Elena Ochoa Ochoa $^{2,}$ \footnote{elenaochoaochoa18@gmail.com}, \ \  Ramón Quintanilla $^{3,}$  \footnote{ramon.quintanilla@upc.edu}.  \\   \small \it  $^{1}$Universidad del B\'io B\'io, Departamento de Matem\'aticas, Concepci\'on, Chile.\\
	\small \it $^{1}$ Laboratório Nacional de Computação Científica, Petrópolis, Brasil.\\
	 \small \it $^{2}$ Universidad Andres Bello, Departamento de Matem\'aticas, Facultad de Ciencias Exactas,\\
	\small \it Sede Concepci\'on, Autopista Concepci\'on-Talcahuano 7100, Talcahuano, Chile.\\
		\small \it $^{3}$ Universidad Polit\'ecnica de la Catalunya,  Departamento de Matem\'aticas,\\
	\small \it Colom 11,08222 Terrassa, Espa\~na.			    
}		

\date{}
		\maketitle

\begin{abstract}

In this article, we investigate the maximal smoothness (infinite differentiability) of solutions to thermoelastic models, specifically those where the heat equation is of the ``phase-lag'' or ``parabolic'' type. We derive optimal regularity results for two distinct models. The first model addresses the transverse oscillations of a fully thermoelastic plate, for which we prove that the associated semigroup is analytic. The second model considers a partially thermoelastic plate composed of two components: a thermoelastic component with nonzero temperature differences and an elastic component unaffected by temperature variations. For this model, we demonstrate that the semigroup \( S(t) \) belongs to the Gevrey class of order 4, provided the solutions are radial and symmetric. Both analyticity and Gevrey class membership are qualitative properties that intricately link regularity and stability, driven by robust dissipative mechanisms. These properties are significantly stronger than standard regularity conditions, such as belonging to the class \( C^k \) or a Sobolev space \( H^s \).

\end{abstract}

		\noindent{\it Keywords and phrases}: Euler Bernoulli equation, semigroup theory, maximal smoothness,  
		smoothing effect, Analiticity,  Gevrey class.

		\section{Introduction}
			It is well known that the juxtaposition of Fourier's law with the energy equation leads naturally to the paradox of the instantaneous propagation of thermal waves. It is also accepted that the propagation of heat at low temperatures is not well described by this form of the heat equation \cite{HETm,HETm1}. For this reason, Cattaneo and Maxwell \cite{Cattaneo} proposed an alternative theory to describe the evolution of temperature. Since the 1970s, a large number of alternative theories for heat conduction have been proposed. With each of these theories can be associated with a thermoelastic theory. Today there are a large number of thermoelastic theories and contributions to their study. It is also worth recalling the recent contributions of Iesan involving high order spatial derivatives for the heat conduction \cite{I1,I2,I3,I4,I5}.
		
			A couple of these alternative theories are those proposed by Tzou \cite{tzou} and Choudhuri \cite{Roy} who provided a constitutive relationship for the heat flow by introducing delay parameters  (usually called "phase-lag"). However, these proposals are not adequate either as they lead to a strongly explosive behavior that does not correspond to what is obtained empirically when studying heat \cite{MQR}. However, the replacement of these functions by different approximations using Taylor polynomials has been accepted by the scientific community and the number of contributions based on this type of theory is immense.
		
		In 2018, Magaña and Quintanilla \cite{MaQ1} studied the problem determined by the elasticity system coupled with a heat equation of the type discussed above. They obtained that the solutions to this problem can be determined by a "quasi-contractive" semigroup. This fact allows to conclude the existence, the uniqueness and the continuous dependence of the solutions with respect to the initial data and the supply terms. However, stability properties have not been obtained in general, and indeed it seems that no conclusion  can be drawn unless we can restrict ourselves to some sub-class of problems. At the same time, the regularity of the solutions has not been studied in detail.
		
				This paper considers this last aspect. We analyze a thermoelastic systems when the heat equation is of the "phase-lag" of "parabolic" type. Our objective is to show results of regularity of the solutions. We will consider two cases, when the material is thermoelastic  type acting on the whole domain and when the material has a localized thermoelastic component, that is, when the material has two components, one simply elastic, without dissipative mechanisms and the other component a thermoelastic systems of phase-lag parabolic type.

\begin{itemize}
\item Case 1: In section 2, let  $\Omega\subset \mathbb{R}^{2}$ be  a bounded domain  with smooth boundary $\partial \Omega$. The fully thermoelastic system of phase-lag parabolic type is given by 
\begin{align}
	\rho u_{tt}&=-\kappa \Delta ^2 u-\beta \Delta \theta \quad \text{in} \ \Omega \times \mathbb{R}^{+}_{0}, \label{b1}\\
	c_T\dfrac{\partial}{\partial t}\left(\sum_{j=0}^na_j\theta^{(j)}\right)&= \left(\sum_{j=0}^nb_j\Delta \theta^{(j)}\right) +\beta \dfrac{\partial}{\partial t}\left(\sum_{j=0}^na_j\Delta u^{(j)}\right)\quad \quad \text{in} \ \Omega \times \mathbb{R}^{+}_{0},\label{b2}
\end{align}	
verifying  the boundary conditions 
\begin{align}\label{frontera}
	u=\Delta u=\theta=0  \ \ \text{on} \  \ \partial \Omega,  \quad t>0
\end{align}
and  initial conditions
\begin{align}
	u(x,0)=u_0(x),\;\; u_t(x,0)=u_1(x), \quad \theta^{(j)}(x,0)=\eta_j(x),\quad  j=0,\cdots, n. \label{b33n2p}
\end{align}

Here $u$ is the displacement, $\theta$ is the temperature, $\rho$ is the mass density,  $c_T$  is the heat capacity $k$ is the elasticity constant, $a_n$ is the phase-lag parameters for heat flux, $b_n$  is the phase-lag parameters temperature gradient and $\beta$ is thermoelastic coupling coefficient.  We will prove that the semigroup is associated with the system \eqref{b1}-\eqref{b33n2p} is analytic.

\item Case 2:  In Section \ref{Gevrey}, we analyze a partially thermoelastic model defined on a composite plate. To this end, we define the domain configuration as follows. Let \( \Omega_0 \) and \( \Omega_2 \subset \mathbb{R}^2 \) be bounded domains with smooth boundaries, satisfying \( \overline{\Omega}_2 \subset \Omega_0 \). Define \( \Omega_1 = \Omega_0 \setminus \overline{\Omega}_2 \), and let the plate's domain be \( \Omega = \Omega_1 \cup \Omega_2 \). The exterior component \( \Omega_1 \) consists of the thermoelastic material, while the interior component \( \Omega_2 \) comprises the elastic material. The boundary of \( \Omega \) is denoted by \( \Gamma_0 \), and the interface between \( \Omega_1 \) and \( \Omega_2 \) is denoted by \( \Gamma_1 \).

\begin{figure}[h]\label{Figur1}
\setlength{\unitlength}{2.0pt}
\begin{center}
\begin{picture}(100,75)

\put(7,-7){\includegraphics[width=6.2cm]{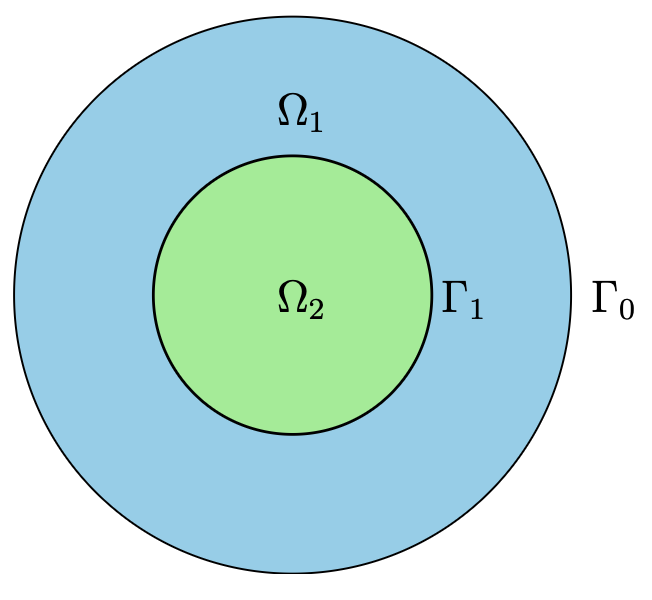}}

\end{picture}
\end{center}
	\caption{Localized thermal effect on $\Omega_1$}
\end{figure}

To facilitate notations let us  denote by $u$ the transversal oscillation of the termoelastic component over $\Omega_1$ and let us denote by $v$ the transversal oscillation of the elastic component over $\Omega_2$
\begin{align}
	\rho u_{tt}&=-\kappa_1 \Delta ^2 u-\beta \Delta \theta  &\text{in} \ \Omega_1 \times \mathbb{R}^{+}_{0}, \label{bl1}\\
	c_T\dfrac{\partial}{\partial t}\left(\sum_{j=0}^na_j\theta^{(j)}\right)&= \left(\sum_{j=0}^nb_j\Delta \theta^{(j)}\right) +\beta \dfrac{\partial}{\partial t}\left(\sum_{j=0}^na_j\Delta u^{(j)}\right)\quad  &\text{in} \ \Omega_1 \times \mathbb{R}^{+}_{0},\label{bl2}\\
	\rho v_{tt}&=-\kappa_2 \Delta ^2 v&\text{in} \ \Omega_2 \times \mathbb{R}^{+}_{0}, \label{bl3}
\end{align}	
verifying  the boundary conditions 
\begin{align}\label{fronteraloc}
	u=\Delta u=\theta=0  \ \ \text{on} \  \ \Gamma_0, \quad  {\theta}=0 \ \ \text{on} \  \  \Gamma_1,  \quad t>0.
\end{align}
Verifying the transmission conditions 
\begin{equation}\label{trans1}
	{u}(x,t)={v}(x,t),\quad \frac{\partial {u}}{\partial\nu}(x,t)=\frac{\partial {v}}{\partial\nu}(x,t) \ \ \text{on} \  \ \Gamma_1,
\end{equation}
\begin{equation}\label{trans2}
	\kappa_1 \Delta {u}+\beta{\theta} =\kappa_2 \Delta{v} ,\quad \kappa_1\frac{\partial \Delta {u}}{\partial\nu}+\beta \frac{\partial {\theta}}{\partial\nu}=\kappa_1\frac{\partial \Delta {v}}{\partial\nu} \ \ \text{on} \  \ \Gamma_1.
\end{equation}
Additionally, we consider the following initial conditions
\begin{align}
	u(x,0)=u_0(x),\;\; u_t(x,0)=u_1(x), \quad \theta^{(j)}(x,0)=\eta_j(x),\quad  j=0,\cdots, n. \label{b33n2}
\end{align}

 We will prove that the semigroup is associated with the system \eqref{bl1}-\eqref{b33n2} is Gevrey class 4,  provided that the solutions are radial and symmetrical.
\end{itemize}

The primary contribution of this work is to prove that the semigroup associated with the thermoelastic model is analytic when thermal effects are uniform across the entire domain. We then investigate the influence of localized thermal dissipation on the model’s dynamics. In this case, we establish that the corresponding semigroup belongs to the Gevrey class of order 4, provided the solutions are radial and symmetric. 
This  result is new and  highlights the regularizing effect of localized thermal mechanisms, demonstrating that such dissipation induces maximal smoothness (infinite differentiability) in the solutions. In particular our result implies

\begin{itemize}
\item If  the semigroup belongs to the Gevrey class of order $\mu$ ($0<\mu<1$) then it is instantaneous differentiable (which implies the maximal smoothness) and  verifies 
$$
\limsup_{t\rightarrow 0 }t^{\frac 2\mu-1}\|Ae^{At}\|<\infty.
$$
See \cite{Crand-Pazy} 
\item From the above property we conclude that the semigroup  is 
immediately norm continuous (immediately uniformly continuous), see \cite{Engel}. 
\item Finally, the norm continued property implies  the spectrum determined growth property (SDG), that means that the growth abscissa (type) of the semigroup 
$$
\omega_0=\lim_{t\rightarrow\infty}\frac{\ln(\|e^{At}\|)}{t}
$$
is equals to upper bound of the spectrum of $A$. This property is important to find numerically the growth abscissa. 
\end{itemize}

The remaining part of this paper is organized as follows. In Section 2, we establish the well-posedness of the model and prove that the associated semigroup, which governs the solutions, is analytic. In Section 3, we show that the corresponding semigroup belongs to the Gevrey class of order 4, provided the solutions are radial and symmetric.

\section{Global heat conduction}
Denoting by 
	$f^{(k)}=\dfrac{\partial^k f}{\partial t^k}$ 
let us  introduce the operator 
	\begin{align*}
		\widehat{f}=a_0 f+a_1f '+...+a_n f^{(n)}=\sum_{j=0}^na_jf^{(j)}.
	\end{align*}
Differentiating $n$ times system \eqref{b1}-\eqref{b2} multiplying any $(j)$ derivative by $a_j$ and summing up the product result we arrive to 

\begin{eqnarray}
	\rho \widehat{u}_{tt}&=&-\kappa\Delta^2 \widehat{u}-\beta \Delta \widehat{\theta},\label{ep1}\\
	c_T\dfrac{\partial}{\partial t}\widehat{\theta}&=&\sum_{j=0}^nb_j\Delta \theta^{(j)}+\beta \Delta \widehat{u_t}. \label{ep2}
\end{eqnarray}
Multiplying equation \eqref{ep1}  by $\widehat{u}_{t}$ and equation \eqref{ep2} by $\widehat{\theta}$ we get
\begin{align*}
	\frac 12 \frac{d}{dt}\left(\int_{\Omega }  \rho\left|\widehat{u}_{t} \right| ^2+ \kappa\left| \Delta \widehat{u} \right| ^2\ d\Omega \right)-\beta\int_{\Omega } \nabla \widehat{\theta}\cdot\nabla \widehat{u}_t \;d\Omega=0,
\end{align*}
\begin{align*}
	\frac 12 \frac{d}{dt}\left(\int_{\Omega }  c_T|\widehat{\theta} | ^2 d\Omega \right)+\int_{\Omega } \sum_{j=0}^nb_j\nabla\theta^{(j)}\cdot\nabla \widehat{\theta}\; d\Omega+\beta\int_{\Omega } \nabla \widehat{\theta}\cdot\nabla \widehat{u}_t\; d\Omega=0.
\end{align*}
Summing the above identities we get 
\begin{align*}
	\frac 12 \frac{d}{dt}\left(\int_{\Omega }  \rho\left|\widehat{u}_{t} \right| ^2+ \kappa\left| \Delta \widehat{u} \right| ^2 + c_T| \widehat{\theta} | ^2 \;d\Omega \right)=-\sum_{j=0}^n\int_{\Omega } b_j\nabla\theta^{(j)}\cdot\nabla \widehat{\theta}\; d\Omega.
\end{align*}
 On the other hand, differentiating the expression 
$$
\frac 12 \frac{d}{dt}\int_{\Omega }  \sum_{j=0}^{n-1}| \nabla\theta^{(j)}|^2\;d\Omega=
\int_{\Omega }  \sum_{j=0}^{n-1} \nabla\theta^{(j+1)}\cdot \nabla\theta^{(j)}\;d\Omega.
$$
Denoting by $E$ the energy defined as 
\begin{align*}
 E(t)=\frac 12 \int_{\Omega }  \rho\left|\widehat{u}_{t} \right| ^2+ \kappa\left| \Delta \widehat{u} \right| ^2 + c_T| \widehat{\theta} | ^2+ \sum_{j=0}^{n-1}| \nabla\theta^{(j)}|^2 \;d\Omega.
\end{align*}
We have that 
\begin{align}
	\dfrac{d}{dt}E(t)=-\sum_{j=0}^n\int_{\Omega } b_j\nabla\theta^{(j)}\cdot\nabla \widehat{\theta}\; d\Omega+\int_{\Omega }  \sum_{j=0}^{n-1} \nabla\theta^{(j+1)}\cdot \nabla\theta^{(j)}\;d\Omega\label{Energ}.
\end{align}
Note that 
\begin{eqnarray}
-\sum_{j=0}^n\int_{\Omega } b_j\nabla\theta^{(j)}\cdot\nabla \widehat{\theta}\; d\Omega&=&-\sum_{j=0,i=0}^n\int_{\Omega } b_j\nabla\theta^{(j)}\cdot\left(\sum_{j=0,i=0}^na_i\nabla \theta^{(i)}\right) d\Omega\nonumber\\
&=&-\sum_{j=0,i=0}^n\int_{\Omega } a_ib_j\nabla\theta^{(j)}\cdot\nabla \theta^{(i)}d\Omega.\label{PreL}
\end{eqnarray}
Since  $a_n$ and $b_n$ are positive numbers, we get
\begin{eqnarray*}
\sum_{i,j=0}^na_ib_jY_iY_j&=&\left( \sum_{i=0}^{n-1}a_iY_i +a_nY_n \right) \left( \sum_{j=0}^{n-1}b_jY_j +b_nY_n \right) \\
&=&\sum_{i,j=0}^{n-1}a_ib_jY_iY_j+a_nY_n\sum_{i=0}^{n-1}b_jY_j+b_nY_n\sum_{i=0}^{n-1}a_iY_i+a_nb_nY_n^2\\
&\geq & -c\sum_{i=0}^{n-1}Y_i^2+\frac{a_nb_n}{2}Y_n^2.
\end{eqnarray*}
Using the above inequality into \eqref{PreL} we conclude from \eqref{Energ} that 
\begin{align}
	\dfrac{d}{dt}E(t)\leq -\frac{a_nb_n}{2}\int_{\Omega }|\nabla\theta^{(n)}|^2\; d\Omega+c_0\int_{\Omega }  \sum_{j=0}^{n-1}| \nabla\theta^{(j)}|^2\;d\Omega
	\label{EnergIn}.
\end{align}
for  $c_0$ a positive constant.\\

Note that the system is not dissipative.

\subsection{Existence: Semigroup approach}

From now on and without lost of generality we assume that $\rho=c_T=1$. The phase space we consider is given by 
\begin{align*}
	\mathcal{H}=[H_0^1(\Omega)\cap H^2(\Omega)]\times L^2(\Omega) \times \left[H_0^1(\Omega) \right]^n\times L^2 (\Omega),
\end{align*}
where $H_0^1(\Omega), H^2(\Omega)$ and $L^2(\Omega)$ are the well known Sobolev spaces. 
From now on we will use $\mathbf{u},\mathbf{v},\vartheta$ instead of $\widehat{u},\widehat{v},\widehat{\theta}$. For any two elements of $	\mathcal{H}$  
\begin{align*}
	U=(\mathbf{u},\mathbf{v},\Theta_0,\Theta_1,\cdots,\Theta_n),\quad U^*=(\mathbf{u}^*,\mathbf{v}^*,\Theta_0^*,\Theta_1^*,\cdots,\Theta_n^*).
\end{align*}
where $\Theta_j=\theta^{(j)}$ for $j=0,\cdots n$.  Denoting 
\begin{align}
	\vartheta= a_0\Theta_0+a_1\Theta_1+...+a_n\Theta_n \nonumber.
\end{align}
The inner product we consider to $\mathcal{H}$ is 
$$
(U,U^*)_{\mathcal{H}}=\int_{\Omega}   \mathbf{v}  \overline{\mathbf{v}}^* 
+\kappa \Delta \mathbf{u}\,\overline{\Delta \mathbf{u}}^*  +\sum_{j=0}^{n-1}\nabla \Theta_{j}  \overline{\nabla \Theta_{j}}^*+  \left( \sum_{j=0}^{n}a_j\Theta_j \right) \left( \sum_{j=0}^{n}a_j \overline{\Theta_j ^*}\right)    d\Omega,
$$
hence 
\begin{eqnarray*}
\left\| 	U\right\| _{\mathcal{H}}^{2}&=&\int_{\Omega}\left( \left| \mathbf{v}\right|^2
+\kappa \left|\Delta \mathbf{u} \right| ^2 +\sum_{j=0}^{n-1}\left|\nabla \Theta_{j} \right|^2+\left| \sum_{j=0}^{n}a_j\Theta_j \right|^2    \right)  d\Omega,\\
&=&\int_{\Omega}\left( \left| \mathbf{v}\right|^2
+\kappa \left|\Delta \mathbf{u} \right| ^2 +\sum_{j=0}^{n-1}\left|\nabla \Theta_{j} \right|^2+\left| \vartheta  \right|^2    \right)  d\Omega.
\end{eqnarray*}
Under these conditions problem \eqref{b1}-\eqref{b2} can be rewritten as 
\begin{align*}
	\dfrac{dU}{dt}=\mathcal{A}U, \ \  U(0)=U_0\in \mathcal{H},
\end{align*}
where 

	\begin{align}
	\mathcal{A}U=\begin{pmatrix}
	\mathbf{v}\\
		-\Delta(\kappa\Delta \mathbf{u} +\beta(a_0\Theta_0+a_1\Theta_1+...+a_n\Theta_n))\\
		\Theta_1 \\
		\Theta_2\\
		.\\
		.\\
		.\\
		\Theta_n\\
	\displaystyle	\dfrac{\beta}{a_n}\Delta \mathbf{v}+\dfrac{1}{a_n} \sum_{j=0}^{n}b_j\Delta\Theta_j-\dfrac{1}{a_n}\sum_{j=0}^{n-1}a_j\Theta_{j}
	\end{pmatrix} . 
\label{opeA}
\end{align}
The domain of the operator is  
\begin{align*}
	D(\mathcal{A})=\left\{U\in\mathcal{H};\; \mathbf{v}\in H_0^{1}\cap H^{2},  \ \sum_{j=0}^n b_j\Theta_j\in H^2(\Omega), \;\;\kappa\Delta 	\mathbf{u}+\beta	\vartheta\in H^{2}(\Omega), \mbox{verifying }\eqref{frontera}\right\} .
\end{align*}
Note that 
\begin{align}
\left\langle \mathcal{A}U,U \right\rangle &=	 \int_{\Omega}  \kappa\Delta \mathbf{v}\overline{\Delta \mathbf{u}}-\kappa\Delta \mathbf{u}\overline{\Delta \mathbf{v}}+\beta \nabla\vartheta \overline{\nabla \mathbf{v}}-\beta \nabla\mathbf{v}\overline{\nabla \vartheta }\nonumber\\
&+\sum_{k=0}^{n-1}\nabla \Theta_{k+1}\overline{\nabla\Theta}_k-\sum_{k,\ell=0}^{n}a_k b_\ell\nabla \Theta_{\ell}\overline{\nabla\Theta}_{k} d\Omega,\label {Operator}
\end{align}
Taking the real part we have

\begin{align*}
	\mbox{Re }\left\langle \mathcal{A}U,U \right\rangle =	\mbox{Re } \int_{\Omega}\left( \sum_{k=0}^{n-1}\nabla \Theta_{k+1}\overline{\nabla\Theta}_k-\sum_{k,\ell=0}^{n}a_k b_\ell\nabla \Theta_{\ell}\overline{\nabla\Theta}_{k}\right) \ d\Omega.
\end{align*}
Using similar reasoning as in \eqref{EnergIn} . 
\begin{align*}
	\mbox{Re }\left\langle \mathcal{A}U,U \right\rangle\leq -\frac{a_nb_n}{2}\int_{\Omega }|\nabla\Theta_{n}|^2\; d\Omega+c_0\int_{\Omega }  \sum_{j=0}^{n-1}| \nabla\Theta_{j}|^2\;d\Omega,
\end{align*}
which is not dissipative. 
To apply the semigroup theory of dissipative generators, we consider a continuous perturbation of $\mathcal{A}$ given by $\mathfrak{B}=\mathcal{A}-2c_0\mathbf{I}$. It is clear that 
$D(\mathfrak{B})=D(\mathcal{A})$ and 
\begin{align}\label{nondis}
	\mbox{Re }\left\langle \mathfrak{B}U,U \right\rangle\leq -\frac{a_nb_n}{2}\int_{\Omega }|\nabla\Theta_{n}|^2\; d\Omega-c_0\left\|U \right\|^2_{\mathcal{H}} .
\end{align}
 Hence to show that $\mathfrak{B}$ is the infinitesimal generator of a contraction semigroup, it is enough to show that $0\in \varrho(\mathfrak{B})$. Indeed, denoting by $\mu=2c_0$,  we have to show that for any $F\in \mathcal{H}$, there exists only one $U\in D(\mathfrak{B})$ such that 
$-\mathfrak{B}U=F$. That is to say 
\begin{align}\label{RES}
	\mu U-\mathcal{A}U=F.
\end{align}
In terms of the components the resolvent equation is  written as 
\begin{align}
	\mu \mathbf{u}-\mathbf{v}=f_1,\label{res1}\\
	\mu \mathbf{v}+ \kappa\Delta ^2\mathbf{u}+\beta\Delta\vartheta =f_2,\label{res2}\\
	\mu\Theta_k-\Theta_{k+1}=g_k,\label{res3}\\
	\mu \Theta_n-\dfrac{1}{a_n}(\beta\Delta \mathbf{v}+\sum_{j=0}^{n}b_j\Delta\Theta_{j})-\dfrac{1}{a_n}\sum_{j=0}^{n-1}a_j\Theta_{j}=g_n,\label{resn}
\end{align}
where  $k=0,\;,\cdots \; n-1$. Under the above conditions we have 
\begin{theorem}\label{Existe1}
	The operator $\mathcal{A}$ generates a quasi-contractive semigroup.
\end{theorem}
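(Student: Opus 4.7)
The plan is to finish the Lumer-Phillips argument that the paper has already set up: dissipativity of $\mathfrak{B}=\mathcal{A}-2c_0\mathbf{I}$ is recorded in \eqref{nondis}, so what remains is the range condition, i.e. solving \eqref{RES} (equivalently \eqref{res1}-\eqref{resn}) with $\mu=2c_0$ for every prescribed $F\in\mathcal{H}$. I would treat this by reducing the system to a coupled elliptic problem in the two ``head'' unknowns $(\mathbf{u},\Theta_0)$ and then applying Lax-Milgram.

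First I would use \eqref{res1} to express $\mathbf{v}=\mu\mathbf{u}-f_1$, and iterate \eqref{res3} to write each $\Theta_{k}$ ($k=1,\ldots,n$) as a polynomial in $\mu$ applied to $\Theta_0$ plus explicit terms built from $g_0,\ldots,g_{k-1}$. Substituting these expressions into \eqref{res2} and \eqref{resn}, and denoting $P(\mu)=\sum_{j=0}^{n}a_j\mu^j$, $Q(\mu)=\sum_{j=0}^{n}b_j\mu^j$, one lands on an elliptic system of the schematic form
\begin{equation*}
\mu^2\mathbf{u}+\kappa\Delta^2\mathbf{u}+\beta P(\mu)\Delta\Theta_0=F_1,\qquad \alpha(\mu)\Theta_0-\tfrac{Q(\mu)}{a_n}\Delta\Theta_0-\tfrac{\beta\mu}{a_n}\Delta\mathbf{u}=F_2,
\end{equation*}
with $F_1,F_2\in L^2(\Omega)$ linear in $F$, and $\alpha(\mu)>0$ for the chosen $\mu=2c_0$. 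I would then multiply the second equation by the positive scalar $a_n P(\mu)/\mu$ so that, after testing against $(\overline{\mathbf{u}^*},\overline{\Theta_0^*})$ in $V=(H^2(\Omega)\cap H^1_0(\Omega))\times H^1_0(\Omega)$ and integrating by parts, the two cross terms $\beta P(\mu)\int\nabla\mathbf{u}\cdot\nabla\overline{\Theta_0^*}$ and $-\beta P(\mu)\int\nabla\Theta_0\cdot\nabla\overline{\mathbf{u}^*}$ combine into a purely imaginary skew-Hermitian contribution that disappears upon taking real parts, mirroring the algebraic cancellation already visible in \eqref{Operator}.

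The resulting sesquilinear form is clearly continuous on $V\times V$, and the above cancellation leaves the quadratic form
\begin{equation*}
\mathrm{Re}\,a\bigl((\mathbf{u},\Theta_0),(\mathbf{u},\Theta_0)\bigr)=\mu^2\|\mathbf{u}\|^2+\kappa\|\Delta\mathbf{u}\|^2+\tfrac{a_nP(\mu)}{\mu}\alpha(\mu)\|\Theta_0\|^2+\tfrac{P(\mu)Q(\mu)}{\mu}\|\nabla\Theta_0\|^2,
\end{equation*}
which is coercive on $V$ by the positivity of the coefficients together with Poincaré's inequality. Lax-Milgram then delivers a unique weak solution $(\mathbf{u},\Theta_0)\in V$; standard elliptic regularity for $\Delta^2$ with the clamped-hinged conditions and for $-\Delta$ with Dirichlet condition upgrades $\mathbf{u}$, $\vartheta$ and $\sum_{j} b_j\Theta_j$ to the regularity demanded by $D(\mathcal{A})$, and reconstructing $\mathbf{v}$ and the $\Theta_k$ through the previous eliminations produces the desired $U\in D(\mathcal{A})$ with $\mu U-\mathcal{A}U=F$. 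The step I expect to require the most care is the precise tuning of the weight $a_nP(\mu)/\mu$ that makes the cross terms cancel exactly and the subsequent verification of uniform coercivity, since both polynomials $P(\mu)$ and $Q(\mu)$ grow with $n$; once this is in place, together with density of $D(\mathcal{A})$ in $\mathcal{H}$, the Lumer-Phillips theorem gives that $\mathfrak{B}$ generates a $C_0$-semigroup of contractions, and therefore $\mathcal{A}=\mathfrak{B}+2c_0\mathbf{I}$ generates a quasi-contractive semigroup with bound $\|e^{\mathcal{A}t}\|\le e^{2c_0 t}$.
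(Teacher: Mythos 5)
Your overall strategy (Lumer--Phillips via dissipativity of $\mathfrak{B}$ plus a Lax--Milgram argument for the range condition) matches the paper's, but the way you set up Lax--Milgram is genuinely different. The paper does not eliminate any unknowns: it defines the sesquilinear form $a(U^1,U^2)=(-\mathcal{A}U^1,U^2)_{\mathcal{H}}+2c_0(U^1,U^2)_{\mathcal{H}}$ directly on the full phase space, reads its continuity off the identity \eqref{Operator} and its coercivity off \eqref{nondis}, solves $-\mathfrak{B}U=F$ weakly for the whole vector $U$ at once, and only then invokes elliptic regularity to place $U$ in $D(\mathcal{A})$. You instead use \eqref{res1} and \eqref{res3} to collapse the system onto the two head unknowns $(\mathbf{u},\Theta_0)$ and run Lax--Milgram on $V=(H^2\cap H^1_0)\times H^1_0$. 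Your reduction has the merit of producing an explicit coercive form on an explicit Sobolev space (the paper's form, which involves $\Delta\mathbf{v}$ and $\nabla\mathbf{v}$, is not obviously continuous on $\mathcal{H}$ as claimed), at the price of having to track the polynomials $P(\mu)$ and $Q(\mu)$.

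That price is where your argument has a genuine gap. The paper only assumes $a_n>0$ and $b_n>0$; the lower-order coefficients $a_j,b_j$ are of unrestricted sign. Hence at the fixed value $\mu=2c_0$ there is no reason for $P(\mu)$, $Q(\mu)$ or your $\alpha(\mu)$ to be positive --- indeed $Q(\mu)$ may vanish, and the paper's closing remark shows that real roots of $\sum_j b_j\mu^j$ genuinely fail to lie in $\varrho(\mathcal{A})$, so your reduced $\Theta_0$-equation can degenerate completely at such a $\mu$. Your coercivity display therefore does not follow as stated. The fix is cheap: Lumer--Phillips only requires the range condition for a single $\lambda>0$, so replace $\mu=2c_0$ by $\mu=2c_0+\lambda$ with $\lambda$ large, where the leading terms $a_n\mu^n$, $b_n\mu^n$, $\mu^{n+1}$ dominate and all of your sign conditions hold. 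A smaller point: after eliminating $\Theta_1,\dots,\Theta_n$, the source term $F_2$ in your $\Theta_0$-equation contains $\Delta g_j$ with $g_j\in H^1_0(\Omega)$ only, so $F_2\in H^{-1}(\Omega)$ rather than $L^2(\Omega)$; this is harmless for Lax--Milgram on $H^1_0$, but the final regularity upgrade must then be phrased for the combination $\sum_j b_j\Theta_j$ (as $D(\mathcal{A})$ requires) rather than for $\Theta_0$ alone.
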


\begin{proof} We show that $\mathfrak{B}=\mathcal{A}-2c_0\mathbf{I}$ is the infinitesimal generator of a contraction semigroup that in particular implies that  $\mathcal{A}$ generates a quasi-contractive semigroup.
In fact, to do that it is enough to show that  $0\in \varrho(\mathfrak{B})$. Denoting by 
$$
U^{i}=(\mathbf{u}^i, \mathbf{v}^i, \Theta_{0}^i, \cdots , \Theta_{n}^i),\quad i=1,2. 
$$
Let us denote by 
$$
a(U^1,U^2)=(-\mathcal{A}U^1,U^2)_{\mathcal{H}}+2c_0(U^1,U^2)_{\mathcal{H}}
$$
From identity \eqref{Operator}  we conclude that  $a(\cdot,\cdot)$ is a continue bilinear form over $\mathcal{H}$. Moreover 
using inequality \eqref{nondis} we get that the bilinear form is  coercive. 
$$
a(U,U)\geq \frac{a_nb_n}{2}\int_{\Omega }|\nabla\Theta_{n}|^2\; d\Omega+c_0\left\|U \right\|^2_{\mathcal{H}} .
$$
Using the Lax-Milgram Lemma we conclude that for any $F\in \mathcal{H}$ there exists only one solution $U\in\mathcal{H}$ such that 
$$
a(U,U^2)=(F,U^2)_{\mathcal{H}},\quad\forall U^2\in\mathcal{H}
$$
but from the definition of  $a(\cdot,\cdot)$ this implies that 
$$
-\mathfrak{B}U=F, 
$$
in the distributional sense. Then using the elliptic regularity we conclude that $U\in D(\mathcal{A})$. 
Therefore   $\mathfrak{B}$ is the infinitesimal generator of a semigroup of contractions.

\end{proof}

\bigskip

\subsection{Analyticity}
Here we prove that the operator $\mathfrak{B}$ given in \eqref{opeA} generates  an analytic  semigroup wherever  $\beta$ is different from zero. To do that  we use the following  characterization that we can find at  Liu and Zheng \cite{liub} to Hilbert spaces. See also Pazy \cite{Pazy} and Klaus-Jochen E. and Rainer N. \cite{Engel}.

\begin{theorem} \label{TeoLiu} A contractive semigroup $S(t)=e^{\mathcal{A}t}$ is analytic over a Hilbert space $\mathcal{H}$ if and only if 
$i\mathbb{R}\subset \varrho({\mathcal{A}})$ and 
$$
\|\gamma(i\gamma I-\mathcal{A})^{-1}\|_{\mathcal{L}(\mathcal{H})}\leq C,\quad \forall \gamma\in \mathbb{R}.
$$
\end{theorem}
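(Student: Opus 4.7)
The plan is to prove both implications, with the bulk of the work concentrated on the sufficiency direction $(\Leftarrow)$. The overarching strategy is to leverage the imaginary-axis resolvent estimate together with the half-plane estimate coming from contractivity to extend $\varrho(\mathcal{A})$ to a sector of opening strictly greater than $\pi$, with $\|\lambda R(\lambda,\mathcal{A})\|$ uniformly bounded on it; this is the classical sectorial criterion (Pazy, Engel--Nagel) characterizing generators of bounded analytic semigroups.

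For the necessity direction, I would start from the hypothesis that $S(t)$ is analytic and contractive, so it extends to a bounded analytic semigroup on a sector $\Sigma_\delta=\{z\neq 0:|\arg z|<\delta\}$. General theory then gives $\sigma(\mathcal{A})$ contained in a closed sector of the left half-plane disjoint from $i\mathbb{R}$, so $i\mathbb{R}\subset\varrho(\mathcal{A})$. The Laplace representation $R(\lambda,\mathcal{A})=\int_0^\infty e^{-\lambda t}S(t)\,dt$, valid for $\mathrm{Re}\,\lambda>0$, is analytically continued across $i\mathbb{R}$ by rotating the integration contour into the sector where $S(z)$ remains uniformly bounded; the resulting deformation directly yields $\|\gamma R(i\gamma,\mathcal{A})\|\leq C$.

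The sufficiency argument proceeds in three steps. First, since $S(t)$ is contractive, Hille--Yosida gives $\{\mathrm{Re}\,\lambda>0\}\subset\varrho(\mathcal{A})$ with $\|R(\lambda,\mathcal{A})\|\leq 1/\mathrm{Re}\,\lambda$. Second, I use the Neumann series centered at $i\gamma$,
$$
R(\lambda,\mathcal{A})=\sum_{k=0}^{\infty}(i\gamma-\lambda)^{k}\,R(i\gamma,\mathcal{A})^{k+1},
$$
which converges whenever $|\lambda-i\gamma|\,\|R(i\gamma,\mathcal{A})\|<1$; the hypothesis $\|R(i\gamma,\mathcal{A})\|\leq C/|\gamma|$ then shows the resolvent extends to the disk $\{\lambda:|\lambda-i\gamma|<|\gamma|/C\}$. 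As $\gamma$ ranges over $\mathbb{R}\setminus\{0\}$, these disks sweep out a sector around $i\mathbb{R}$ of the form $\{\lambda:|\arg(-\lambda)-\pi|<\pi/2+\delta\}$ with $\delta=\arctan(1/C)>0$. Third, on this enlarged resolvent set the Neumann series gives the pointwise estimate $\|R(\lambda,\mathcal{A})\|\leq 2C/|\gamma|$, which together with the half-plane bound is patched into the uniform sectorial estimate $\|\lambda R(\lambda,\mathcal{A})\|\leq M$.

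The main obstacle will be the bookkeeping required to glue the imaginary-axis estimate to the half-plane estimate into a single uniform sectorial bound with a constant $M$ independent of $\lambda$, and to ensure the opening $\delta$ is strictly positive and uniform in $\gamma$ (including behavior near $\lambda=0$ and at infinity). Once the uniform sectorial resolvent bound is established, analyticity of $S(t)$ is immediate from the Dunford--Cauchy representation
$$
S(t)=\frac{1}{2\pi i}\int_{\Gamma}e^{\lambda t}R(\lambda,\mathcal{A})\,d\lambda,
$$
taken along a suitable contour $\Gamma$ lying in the enlarged sector, together with differentiation under the integral sign in $t$.
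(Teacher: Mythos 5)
The paper does not actually prove this theorem --- its ``proof'' is the citation ``See \cite{liub}'' --- so any self-contained argument is by default a different route. Your sufficiency direction is the standard and essentially correct one: Hille--Yosida gives the right half-plane bound, the Neumann series $R(\lambda,\mathcal{A})=\sum_{k\ge 0}(i\gamma-\lambda)^{k}R(i\gamma,\mathcal{A})^{k+1}$ pushes the resolvent set across $i\mathbb{R}$ into a sector of opening $\pi/2+\delta$ on which $\|\lambda R(\lambda,\mathcal{A})\|$ is uniformly bounded, and the Dunford--Cauchy integral then yields the analytic extension. This is exactly the argument in Liu--Zheng and in Pazy, and the bookkeeping you defer (uniformity of $\delta$, the patching of the two estimates, the behavior near $\lambda=0$ where the hypothesis $\|\gamma R(i\gamma,\mathcal{A})\|\le C$ is vacuous and you must instead use $0\in\varrho(\mathcal{A})$ together with local boundedness of the resolvent) is routine and you explicitly flag it, so I do not count it as a gap.

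The genuine problem is the necessity direction. As stated --- and as you attempt to prove it --- the ``only if'' implication is false: the identity semigroup $S(t)=I$ (generator $\mathcal{A}=0$), or the Neumann heat semigroup, is analytic and contractive and satisfies $\|\gamma(i\gamma I-\mathcal{A})^{-1}\|\le C$ for all $\gamma\neq 0$, yet $0\in\sigma(\mathcal{A})$, so $i\mathbb{R}\not\subset\varrho(\mathcal{A})$. Your step ``general theory gives $\sigma(\mathcal{A})$ contained in a closed sector of the left half-plane disjoint from $i\mathbb{R}$'' is precisely where this breaks: sectoriality of the generator of a bounded analytic semigroup places $\sigma(\mathcal{A})$ in a closed sector with vertex at the origin, and that sector meets $i\mathbb{R}$ at $0$. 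The correct formulation (Liu--Zheng, Theorem 1.3.3) takes $i\mathbb{R}\subset\varrho(\mathcal{A})$ as a standing hypothesis and asserts that analyticity is equivalent only to $\limsup_{|\gamma|\to\infty}\|\gamma(i\gamma I-\mathcal{A})^{-1}\|<\infty$; your contour-rotation argument does establish that high-frequency bound once $i\mathbb{R}\subset\varrho(\mathcal{A})$ is assumed. In short, you correctly prove the direction the paper actually uses, but the converse cannot be proved in the form stated because the statement itself, as transcribed in the paper, is slightly too strong.
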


\noindent 
\begin{proof}
See  \cite{liub}.
\end{proof}

\bigskip 
\noindent
To show the analyticity of $S(t)=e^{\mathcal{A}t}$ we consider the continuous perturbation of $\mathcal{A}$ given by
$\mathfrak{B}=\mathcal{A}-2c_0\mathbf{I}$. Since the identity commutes with  $\mathcal{A}$ then we have that 
$$
S(t)=e^{\mathcal{A}t}=e^{\mathfrak{B}t}e^{2c_0t}.
$$
Therefore $e^{\mathcal{A}t}$ is analytic if and only if $e^{\mathfrak{B}t}$ is analytic. Since $\mathfrak{B}$ is dissipative \eqref{nondis},  we can apply Theorem \ref{TeoLiu} to show that $e^{\mathfrak{B}t}$ is analytic. To do that we consider the resolvent equation: 
$$
i\gamma U-\mathfrak{B}U=F. 
$$
Taking the inner product in $\mathcal{H}$ with $U$ and taking the real part we get
\begin{align*} 
\int_{\Omega} \left| \mathbf{v}\right|^2
+\kappa \left|\Delta\mathbf{ u} \right| ^2 +\sum_{j=0}^{n-1}|\nabla \Theta_{j} |^2+c|\vartheta |^2    d\Omega+\frac{a_nb_n}{2}\int_{\Omega }|\nabla\Theta_{n}|^2\; d\Omega\leq c\|U\|_{\mathcal{H}}\|F\|_{\mathcal{H}},
\end{align*}
or 
\begin{align}\label{Dissipa}
\int_{\Omega} \left| \mathbf{v}\right|^2
+\kappa \left|\Delta\mathbf{ u} \right| ^2 +\sum_{j=0}^{n}|\nabla \Theta_{j} |^2+|\vartheta |^2    d\Omega\leq c\|U\|_{\mathcal{H}}\|F\|_{\mathcal{H}}.
\end{align}

For some positive constant $c$. Hence 
system \eqref{res1}-\eqref{resn} can be written as 
\begin{align}
	i\gamma \mathbf{u}+\mu \mathbf{u}-\mathbf{v}=f_1\label{xres1},\\
	i\gamma \mathbf{v}+\mu \mathbf{v}+  \kappa\Delta ^2\mathbf{u}-\beta\Delta \vartheta  =f_2,\label{xres2}\\
	i\gamma \Theta_k +\mu \Theta_k-\Theta_{k+1}=g_k,\label{xres3}\\
	i\gamma \Theta_n+\mu \Theta_n-\dfrac{1}{a_n}\left( \beta\Delta \mathbf{v}+\sum_{j=0}^{n}b_j\Delta\Theta_{j}\right) +\dfrac{1}{a_n}\beta\sum_{j=0}^{n-1}a_j\Theta_{j}=g_n,\label{xresn}
\end{align}
where  $k=0,\;,\cdots \; n-1$.
Our first step is to show that the imaginary axes is contained in the resolvent set of the operator $\mathfrak{B}$. To do that we use the following result.
\begin{lemma} \label{iRcR}
 Under the above conditions we have that $i\mathbb{R}\subset \varrho(\mathfrak{B})$. 
 \end{lemma}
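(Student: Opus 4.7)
My plan is to combine the strict dissipativity of $\mathfrak{B}$ recorded in \eqref{nondis} with compactness of its resolvent to show first that $(i\gamma I-\mathfrak{B})$ is injective and then that it is surjective for every $\gamma\in\mathbb{R}$. Once these two steps are in place, the open mapping theorem delivers $i\gamma\in\varrho(\mathfrak{B})$.

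For the \emph{a priori} estimate, I would take the inner product in $\mathcal{H}$ of the resolvent equation $i\gamma U-\mathfrak{B}U=F$ with $U$ and extract the real part. Because $\mathrm{Re}\langle i\gamma U,U\rangle_{\mathcal{H}}=0$, inequality \eqref{nondis} yields
\[
c_0\|U\|_{\mathcal{H}}^{2}+\tfrac{a_nb_n}{2}\|\nabla\Theta_{n}\|_{L^{2}}^{2}\leq \|F\|_{\mathcal{H}}\|U\|_{\mathcal{H}},
\]
so $\|U\|_{\mathcal{H}}\leq c_0^{-1}\|F\|_{\mathcal{H}}$, uniformly in $\gamma\in\mathbb{R}$. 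Taking $F=0$ rules out any purely imaginary eigenvalue of $\mathfrak{B}$, and, in addition, this estimate will be the uniform bound needed later when applying Theorem \ref{TeoLiu}.

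To obtain surjectivity I would argue that $\mathfrak{B}$ has compact resolvent, so that $\sigma(\mathfrak{B})$ consists exclusively of isolated eigenvalues of finite multiplicity; combined with the injectivity just proved, this forces $i\mathbb{R}\cap\sigma(\mathfrak{B})=\emptyset$. Compactness follows from the fact that $D(\mathcal{A})$ imposes $\mathbf{v}\in H_{0}^{1}\cap H^{2}$, $\kappa\Delta\mathbf{u}+\beta\vartheta\in H^{2}$ and $\sum_{j=0}^{n}b_{j}\Theta_{j}\in H^{2}$, each of which embeds compactly into the corresponding $L^{2}$ or $H^{1}$ slot of $\mathcal{H}$; the algebraic cascade $\Theta_{k+1}=\mu\Theta_{k}-g_{k}$ coming from \eqref{xres3} then propagates this compactness down the chain of temperature components. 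Alternatively one can bypass compactness altogether via a connectedness argument on the set $\{\gamma\in\mathbb{R}\colon i\gamma\in\varrho(\mathfrak{B})\}$, which is nonempty (it contains $0$ by Theorem \ref{Existe1}), open (since $\varrho(\mathfrak{B})$ is open) and closed in $\mathbb{R}$ precisely because of the uniform bound $\|(i\gamma I-\mathfrak{B})^{-1}\|\leq c_0^{-1}$, hence equal to all of $\mathbb{R}$.

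The main technical point, and the step I expect to be most delicate, is recovering individual $H^{1}$-regularity of each $\Theta_{k}$ from the weaker structural condition that only the combination $\sum b_{j}\Theta_{j}$ lies in $H^{2}$; this is the ingredient needed to make either the compactness or the closedness argument rigorous, and it is handled by iterating \eqref{xres3} together with the standing assumption $a_{n},b_{n}>0$ used throughout the paper.
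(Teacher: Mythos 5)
Your second, ``alternative'' route is the one that actually works, and it is essentially the paper's own argument in different clothing. The a priori estimate you derive from \eqref{nondis} --- $c_0\|U\|_{\mathcal{H}}^2+\tfrac{a_nb_n}{2}\|\nabla\Theta_n\|^2\leq\|F\|_{\mathcal{H}}\|U\|_{\mathcal{H}}$, hence $\|U\|_{\mathcal{H}}\leq c_0^{-1}\|F\|_{\mathcal{H}}$ uniformly in $\gamma$ --- is exactly the content of \eqref{Dissipa}, and the clopen/connectedness argument starting from $0\in\varrho(\mathfrak{B})$ is the same mechanism as the paper's proof by contradiction (the paper sets $\sigma=\sup\{s:\,]-is,is[\subset\varrho(\mathfrak{B})\}$, extracts a normalized sequence with $i\gamma_nU_n-\mathfrak{B}U_n=F_n\to0$, and uses \eqref{Dissipa} to force $U_n\to0$, contradicting $\|U_n\|_{\mathcal{H}}=1$). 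Both are just the standard Neumann-series continuation along the imaginary axis powered by the uniform resolvent bound.

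The route you lead with, however, has a genuine gap: the resolvent of this operator is \emph{not} compact, so you cannot conclude that $\sigma(\mathfrak{B})$ consists only of eigenvalues. The obstruction is precisely the ``delicate point'' you flag and then wave away: $D(\mathcal{A})$ only gives $H^2$ regularity for the single combination $\sum_{j=0}^{n}b_j\Theta_j$, and the algebraic cascade \eqref{xres3} relates $\Theta_{k+1}$ to $\Theta_k$ and $g_k$ without adding any spatial regularity beyond what the phase space already carries; iterating it cannot distribute the $H^2$ regularity of the combination onto the individual components. In fact one can exhibit real numbers $\mu$ at which the symbol $\sum_{j}b_j\mu^{j}$ vanishes; at such points the resolvent equation forces $\vartheta$ to equal an expression that is merely $L^2$, so $\mu\in\sigma(\mathcal{A})$ without being an eigenvalue --- spectrum that a compact resolvent could not produce. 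So the injectivity-plus-compactness scheme collapses at the surjectivity step. The fix is simply to discard that branch and keep your connectedness argument, which needs nothing beyond the uniform bound $\|(i\gamma I-\mathfrak{B})^{-1}\|\leq c_0^{-1}$ on the portion of $i\mathbb{R}$ already known to lie in $\varrho(\mathfrak{B})$.
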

\begin{proof}
			Let us denote by 
\begin{equation*}
\mathcal{N}=\{s\in \mathbb{R}^+\colon ]-is,\,is[\subset \varrho(\mathfrak{B})\}.
\end{equation*}
Since $0\in \varrho(\mathfrak{B})$ so we have  $\mathcal{N}\neq \varnothing$. Putting  $\sigma=\sup \mathcal{N}$ we have to possibilities:  first  $\sigma =+\infty$ which implies that $i\mathbb{R}\subseteq \varrho(\mathfrak{B})$, and  that $0<\sigma$ finite. We will reason by contradiction. Let us suppose  that  $\sigma<\infty$. Then, exists a sequence $\{\gamma_n\}\subseteq \mathbb{R}$ such that $ \gamma_n\to\sigma< \infty$ and
$$\|(i\gamma_nI-\mathfrak{B})^{-1}\|_{\mathcal{L}(\mathcal{H})}\to \infty.$$
Hence, there exists a sequence $\{f_n\}\subseteq \mathcal{H}$ verifying  $\|f_n\|_{\mathcal{H}}=1$ and $\|(i\gamma_nI-\mathfrak{B})^{-1}f_n\|_{\mathcal{H}}\to \infty$. Denoting by
$$
\tilde{U}_n=(i\gamma_nI-\mathfrak{B})^{-1}f_n\quad \Rightarrow \quad f_n=i\gamma_n\tilde{U}_n-\mathfrak{B}\tilde{U}_n,
$$
and $U_n=\dfrac{\tilde{U}_n}{\|\tilde{U}_n\|_{\mathcal{H}}}$, $F_n=\dfrac{f_n}{\|\tilde{U}_n\|_{\mathcal{H}}}$ we conclude that $U_n$ verifies $\|U_n\|_{\mathcal{H}}=1$ and 
\begin{equation*}\label{convresol}
i\gamma_nU_n-\mathfrak{B} U_n=F_n\to 0.
\end{equation*}
Using \eqref{Dissipa} we get that 
$$
U_n\quad \rightarrow\quad0 ,\quad \mbox{strong in}\quad \mathcal{H}.
$$
But this is contradictory to  $\|U_n\|_{\mathcal{H}}=1$. 
Hence our conclusion follows.

\end{proof}

\begin{lemma}\label{inqq1} Under the above notations  there exists a positive constant $c$  such that  
	$$
	\int_{\Omega}\kappa|\nabla\Delta \mathbf{u}|^2 d\Omega\leq c|\gamma|\|U\|_{\mathcal{H}}^2+c\|U\|_{\mathcal{H}}\|F\|_{\mathcal{H}}.
	$$

\end{lemma}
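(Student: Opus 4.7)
The plan is to test the second resolvent equation against $-\overline{\Delta\mathbf{u}}$ and carry out two integrations by parts, exploiting that $\mathbf{u}\in D(\mathcal{A})$ satisfies $\mathbf{u}=\Delta\mathbf{u}=0$ on $\partial\Omega$. Starting from \eqref{xres2} written as $\kappa\Delta^2\mathbf{u}=f_2-i\gamma\mathbf{v}-\mu\mathbf{v}+\beta\Delta\vartheta$, one multiplies by $-\overline{\Delta\mathbf{u}}$ and integrates. The principal term becomes
\begin{equation*}
-\kappa\int_{\Omega}\Delta^2\mathbf{u}\,\overline{\Delta\mathbf{u}}\,d\Omega=\kappa\int_{\Omega}|\nabla\Delta\mathbf{u}|^2\,d\Omega,
\end{equation*}
after one integration by parts (the boundary contribution $\int_{\partial\Omega}\partial_\nu\Delta\mathbf{u}\,\overline{\Delta\mathbf{u}}\,dS$ vanishes because $\Delta\mathbf{u}=0$ on $\partial\Omega$). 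This is the quantity we want to estimate.

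Next I would rewrite the coupling term. Since $\Delta\mathbf{u}=0$ on $\partial\Omega$, integration by parts yields
\begin{equation*}
\beta\int_{\Omega}\Delta\vartheta\,\overline{\Delta\mathbf{u}}\,d\Omega=-\beta\int_{\Omega}\nabla\vartheta\cdot\overline{\nabla\Delta\mathbf{u}}\,d\Omega,
\end{equation*}
so the identity becomes
\begin{equation*}
\kappa\int_{\Omega}|\nabla\Delta\mathbf{u}|^2\,d\Omega=i\gamma\int_{\Omega}\mathbf{v}\,\overline{\Delta\mathbf{u}}\,d\Omega+\mu\int_{\Omega}\mathbf{v}\,\overline{\Delta\mathbf{u}}\,d\Omega+\beta\int_{\Omega}\nabla\vartheta\cdot\overline{\nabla\Delta\mathbf{u}}\,d\Omega-\int_{\Omega}f_2\,\overline{\Delta\mathbf{u}}\,d\Omega.
\end{equation*}
The first two terms on the right are bounded by Cauchy--Schwarz using $\|\mathbf{v}\|_{L^2},\|\Delta\mathbf{u}\|_{L^2}\leq c\|U\|_{\mathcal{H}}$, yielding a term of size $c|\gamma|\|U\|_{\mathcal{H}}^2$. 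The $f_2$ term is bounded by $c\|F\|_{\mathcal{H}}\|U\|_{\mathcal{H}}$. For the coupling term I would apply Young's inequality with a small parameter to produce $\tfrac{\kappa}{2}\|\nabla\Delta\mathbf{u}\|_{L^2}^2+c\|\nabla\vartheta\|_{L^2}^2$, and then absorb the first summand into the left-hand side.

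The remaining ingredient is to control $\|\nabla\vartheta\|_{L^2}^2$. Since $\vartheta=\sum_{j=0}^{n}a_j\Theta_j$, one has $\|\nabla\vartheta\|_{L^2}^2\leq c\sum_{j=0}^{n}\|\nabla\Theta_j\|_{L^2}^2$, and the dissipation estimate \eqref{Dissipa} gives $\sum_{j=0}^{n}\|\nabla\Theta_j\|_{L^2}^2\leq c\|U\|_{\mathcal{H}}\|F\|_{\mathcal{H}}$. Combining everything and absorbing the $\tfrac{\kappa}{2}\|\nabla\Delta\mathbf{u}\|^2$ term to the left produces the announced bound. The only subtle step is the integration by parts on the coupling term: one must recognise that the boundary flux vanishes thanks to $\Delta\mathbf{u}|_{\partial\Omega}=0$, since otherwise $\Delta\vartheta$ would force one to use regularity information on $\vartheta$ that is not available from the dissipation. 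Apart from that, the calculation is a straightforward weighted multiplier argument.
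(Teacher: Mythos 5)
Your proposal is correct and follows essentially the same route as the paper: both multiply \eqref{xres2} by (a sign of) $\overline{\Delta\mathbf{u}}$, integrate by parts using $\Delta\mathbf{u}=0$ on $\partial\Omega$ to produce $\kappa\|\nabla\Delta\mathbf{u}\|^2$ and the term $\beta\int\nabla\vartheta\cdot\overline{\nabla\Delta\mathbf{u}}$, absorb the latter by Young's inequality, and control $\|\nabla\vartheta\|^2$ via the dissipation estimate \eqref{Dissipa}. The only difference is the inessential sign convention in the multiplier.
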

	\begin{proof}Multiplying equation \eqref{xres2} by $\overline{\Delta \mathbf{u}}$, recalling the boundary conditions  and integrating by parts
	 we get 
	
\begin{equation}\label{Ident}
	\underbrace{i\gamma \int_{\Omega}\mathbf{v}\overline{\Delta \mathbf{u}}\;d\Omega+\mu \int_{\Omega}\mathbf{v}\overline{\Delta \mathbf{u}}\;d\Omega}_{\leq c|\gamma|\|U\|_{\mathcal{H}}^2}\underbrace{+\int_{\Omega}(\kappa\Delta ^2\mathbf{u}-\beta\Delta\vartheta)\overline{\Delta \mathbf{u}}\;d\Omega}_{-\int_{\Omega}\kappa|\nabla\Delta \mathbf{u}|^2 d\Omega+\int_{\Omega}\beta\nabla\vartheta\;\overline{\nabla\Delta \mathbf{u}}\;d\Omega}=\underbrace{\int_{\Omega}f_2\overline{\Delta \mathbf{u}}\;d\Omega}_{\leq c\|U\|_{\mathcal{H}}\|F\|_{\mathcal{H}}},
\end{equation}

	for $\gamma$ large. Using the Cauchy–Schwarz inequality we see
\begin{equation}\label{Ident2}
	\int_{\Omega}\left| \beta\nabla\vartheta\;\overline{\nabla\Delta \mathbf{u}}\right| \;d\Omega\leq \frac \kappa 2\int_{\Omega}|\nabla\Delta\mathbf{ u}|^2 d\Omega+\int_{\Omega}|\nabla\vartheta|^2 d\Omega .
\end{equation}
	Using inequality \eqref{Dissipa} and \eqref{Ident2} inside relation \eqref{Ident} we arrive to 
	$$
	\int_{\Omega}\kappa|\nabla\Delta\mathbf{ u}|^2 d\Omega\leq c|\gamma|\|U\|_{\mathcal{H}}^2+c\|U\|_{\mathcal{H}}\|F\|_{\mathcal{H}}.
	$$
	Hence our conclusion follows.
		\end{proof}

\begin{lemma}\label{Lem1}
	Under the above notations  for any $ \epsilon>0$ there exists a positive constant  $c_\epsilon$ such that 
 \begin{eqnarray}\label{ini}
 \int_\Omega |\gamma \Theta_n|^2\;d\Omega&\leq &\epsilon |\gamma|^2\|U\|_{\mathcal{H}}^2+c_\epsilon \|F\|_{\mathcal{H}}^2.\\
\int_{\Omega }|\gamma \Delta \mathbf{u}|^2\;d\Omega&\leq &	\epsilon |\gamma|^2\|U\|_{\mathcal{H}}^2+c_\epsilon\|F\|_{\mathcal{H}}^2.\label{ddu}
\end{eqnarray}

\end{lemma}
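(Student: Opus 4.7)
The strategy is to apply suitable multipliers to the resolvent system \eqref{xres1}--\eqref{xresn} and combine the resulting identities with the dissipation estimate \eqref{Dissipa} and Young's inequality. Each bound targets a different component: \eqref{ini} comes from testing the equation for $\Theta_n$ against $\overline{\Theta_n}$, while \eqref{ddu} comes from testing \eqref{xres2} against $\overline{\mathbf{u}}$ after eliminating $\mathbf{v}$ via \eqref{xres1}.

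For \eqref{ini}, I multiply \eqref{xresn} by $\overline{\Theta_n}$, integrate over $\Omega$, and use the boundary conditions $\mathbf{v}|_{\partial\Omega}=\Theta_j|_{\partial\Omega}=0$ to perform integration by parts on the Laplacian terms. Taking the imaginary part of the resulting identity produces
$$
\gamma\|\Theta_n\|^2 = \mathrm{Im}\Bigl[-\tfrac{\beta}{a_n}\!\int_\Omega \nabla\mathbf{v}\cdot\overline{\nabla\Theta_n}\,d\Omega \,-\, \tfrac{1}{a_n}\sum_{j=0}^n b_j\!\int_\Omega\nabla\Theta_j\cdot\overline{\nabla\Theta_n}\,d\Omega \,-\, \tfrac{\beta}{a_n}\sum_{j<n}a_j\!\int_\Omega\Theta_j\overline{\Theta_n}\,d\Omega \,+\!\int_\Omega g_n\overline{\Theta_n}\,d\Omega\Bigr].
$$
All gradient and $L^2$-factors involving $\Theta_j$ are controlled by $\sqrt{\|U\|\|F\|}$ through \eqref{Dissipa} (with Poincaré's inequality for the $L^2$-terms), while $\|\nabla\mathbf{v}\|\le c|\gamma|\|U\|+c\|F\|$ follows from \eqref{xres1}. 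Multiplying the resulting estimate by $|\gamma|$ and applying Young's inequality with weight $\epsilon$ delivers \eqref{ini}.

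For \eqref{ddu}, I multiply \eqref{xres2} by $\overline{\mathbf{u}}$, substitute $\mathbf{v}=(i\gamma+\mu)\mathbf{u}-f_1$ from \eqref{xres1}, and integrate by parts using $\mathbf{u}|_{\partial\Omega}=\Delta\mathbf{u}|_{\partial\Omega}=0$, arriving at
$$
\kappa\|\Delta\mathbf{u}\|^2 + (i\gamma+\mu)^2\|\mathbf{u}\|^2 = (i\gamma+\mu)\!\int_\Omega f_1\overline{\mathbf{u}}\,d\Omega - \beta\!\int_\Omega\nabla\vartheta\cdot\overline{\nabla\mathbf{u}}\,d\Omega +\!\int_\Omega f_2\overline{\mathbf{u}}\,d\Omega.
$$
The real part gives $\kappa\|\Delta\mathbf{u}\|^2 = \gamma^2\|\mathbf{u}\|^2 + \textrm{l.o.t.}$; from \eqref{xres1} we have $|\gamma|^2\|\mathbf{u}\|^2\le c(\|\mathbf{v}\|^2+\|f_1\|^2)\le c\|U\|\|F\|+c\|F\|^2$ via \eqref{Dissipa}. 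Multiplying the resulting estimate by $|\gamma|^2$ and applying Young's inequality produces \eqref{ddu}.

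The main technical obstacle is the calibration of Young's inequality on the various cross-terms of the form $|\gamma|^a\|U\|^b\|F\|^c$: the weights must be chosen so that the $|\gamma|^2\|U\|^2$ contribution receives only the factor $\epsilon$ and the residual reduces to $c_\epsilon\|F\|^2$ without spurious $|\gamma|$-factors being attached to $\|F\|^2$. This is accomplished by invoking \eqref{xres1} in the form $|\gamma|\|\mathbf{u}\|\le c\|\mathbf{v}\|+c\|F\|$ to trade one power of $|\gamma|$ for a factor of $\|\mathbf{v}\|$, which is then absorbed by the dissipation bound $\|\mathbf{v}\|^2\le c\|U\|\|F\|$ and split via Young's inequality into the desired form.
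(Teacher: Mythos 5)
Both halves of your argument stall at the very point you flag as ``the main technical obstacle'': the calibration of powers of $|\gamma|$ is not actually resolved by the devices you propose. For \eqref{ini}, after multiplying your identity by $|\gamma|$ the coupling term becomes $|\gamma|\,\|\nabla\mathbf{v}\|\,\|\nabla\Theta_n\|$, and with your bound $\|\nabla\mathbf{v}\|\le c|\gamma|\|U\|_{\mathcal{H}}+c\|F\|_{\mathcal{H}}$ together with $\|\nabla\Theta_n\|\le c\|U\|_{\mathcal{H}}^{1/2}\|F\|_{\mathcal{H}}^{1/2}$ from \eqref{Dissipa} it is majorized by $c|\gamma|^{2}\|U\|_{\mathcal{H}}^{3/2}\|F\|_{\mathcal{H}}^{1/2}$. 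No choice of Young exponents splits this into $\epsilon|\gamma|^{2}\|U\|^{2}+c_\epsilon\|F\|^{2}$ (the exponents of $|\gamma|$ and of $\|U\|$ are incompatible), and in the regime one is ultimately aiming for, $\|U\|\sim\|F\|/|\gamma|$, this term behaves like $|\gamma|^{1/2}\|F\|^{2}$, which is unbounded relative to the right-hand side of \eqref{ini}. The missing ingredient is the interpolation step \eqref{estxxx}: $\|\nabla\mathbf{v}\|^{2}\le c\|\mathbf{v}\|\,\|\Delta\mathbf{v}\|\le c|\gamma|\,\|\mathbf{v}\|\,\|\Delta\mathbf{u}\|+c\|\mathbf{v}\|\,\|F\|_{\mathcal{H}}$, which costs only $|\gamma|^{1/2}$ rather than a full power of $|\gamma|$ and turns the coupling term into $c|\gamma|^{3/2}\|U\|^{3/2}\|F\|^{1/2}+c|\gamma|\|U\|\|F\|$; these do split correctly by Young's inequality.

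The gap in your proof of \eqref{ddu} is more basic. The estimate your multiplier $\overline{\mathbf{u}}$ produces, $\|\Delta\mathbf{u}\|^{2}\le c\|U\|_{\mathcal{H}}\|F\|_{\mathcal{H}}+\cdots$, is already contained in \eqref{Dissipa}, and multiplying a true inequality by $|\gamma|^{2}$ multiplies \emph{every} term on the right by $|\gamma|^{2}$ as well: you are left with $|\gamma|^{2}\|U\|\|F\|$, which Young can only split as $\epsilon|\gamma|^{2}\|U\|^{2}+c_\epsilon|\gamma|^{2}\|F\|^{2}$, with a fatal residual factor $|\gamma|^{2}$ on $\|F\|^{2}$. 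To obtain $\|\gamma\Delta\mathbf{u}\|^{2}$ with admissible remainders you must extract the extra powers of $\gamma$ from the structure of the equations, not by rescaling an estimate. This is what the paper does: it substitutes $\Delta\mathbf{v}=(i\gamma+\mu)\Delta\mathbf{u}-\Delta f_1$ into \eqref{xresn} to obtain \eqref{inires} and multiplies \emph{that} equation by $\overline{i\gamma\Delta\mathbf{u}}$, so that $\tfrac{\beta}{a_n}\|\gamma\Delta\mathbf{u}\|^{2}$ appears directly from the coupling; the remaining terms are then controlled by the already-proved bound \eqref{ini} on $\|\gamma\Theta_n\|$, an integration by parts, and Lemma \ref{inqq1} for $\|\nabla\Delta\mathbf{u}\|^{2}$. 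Your route to \eqref{ddu} cannot be repaired without some such mechanism.
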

	\begin{proof}
Multiplying \eqref{xresn} by $	\overline{i\gamma \Theta_n}$ and taking the real part we have 

\begin{eqnarray*}
\int_\Omega |\gamma \Theta_n|^2\;d\Omega-\mu i\gamma  \|\Theta_n\|^2 \ &=&\dfrac{\beta}{a_n}i\gamma \int_\Omega \nabla \mathbf{ v}\overline{\nabla \Theta_n}\;d\Omega-\sum_{j=0}^{n}\frac{b_j}{a_n}\int_\Omega \nabla\Theta_{j}\overline{i\gamma \nabla\Theta_n}d\Omega\\
&&+\dfrac{\beta}{a_n}\int_\Omega \sum_{j=0}^{n-1}a_j\Theta_{j}\overline{i\gamma \Theta_n}\;d\Omega+\int_\Omega g_n\overline{i\gamma \Theta_n}\;d\Omega.
\end{eqnarray*}
Taking the real part and using \eqref{xres3} we get
\begin{eqnarray*}
\int_\Omega |\gamma \Theta_n|^2\;d\Omega&=&\mbox{Re } \dfrac{\beta}{a_n}i\gamma \int_\Omega \nabla \mathbf{v}\overline{\nabla \Theta_n}\;d\Omega
-\mbox{Re } \sum_{j=1}^{n}\frac{b_j}{a_n}\int_\Omega \nabla\Theta_{j}\overline{\nabla\Theta_n}d\Omega\\
&&-\mbox{Re } \sum_{j=1}^{n}\frac{b_j}{a_n}\int_\Omega \nabla g_{j}\overline{\nabla\Theta_n}d\Omega
+\mbox{Re } \dfrac{\beta}{a_n}\int_\Omega \sum_{j=1}^{n-1}a_j\Theta_{j}\overline{i\gamma \Theta_n}\;d\Omega\\
&&+
\mbox{Re } \dfrac{\beta}{a_n}\int_\Omega \sum_{j=1}^{n-1}a_j\Theta_{j}\overline{i\gamma \Theta_n}\;d\Omega
+\mbox{Re } \int_\Omega g_n\overline{i\gamma \Theta_n}\;d\Omega.
\end{eqnarray*}
%
Using the intermediate derivative theorem, the elliptic regularity, relation \eqref{res1} and \eqref{res3} we have 
\begin{eqnarray}
	\int_\Omega |\nabla \mathbf{v}|^2\;d\Omega&\leq &c\left(\int_\Omega |\mathbf{v}|^2\;d\Omega\right)^{1/2}  \left(\int_\Omega |\Delta \mathbf{v}|^2\;d\Omega\right)^{1/2}\nonumber  \\
	&\leq &c\|\mathbf{v}\| \|(i\gamma +\mu)\Delta \mathbf{u}-\Delta f_1\| \nonumber\\
	&\leq &c|\gamma\|\mathbf{v}\| \|\Delta\mathbf{u}\|+  
	c\|\mathbf{v}\|\|F\|_{\mathcal{H}},\label{estxxx}
\end{eqnarray}
for $\gamma$ large. So we obtain
\begin{eqnarray*}
\left|\beta i\gamma \int_\Omega \nabla \mathbf{v}\overline{\nabla \Theta_n}\;d\Omega\right|&\leq &c|\gamma|\|\nabla \mathbf{v}\|\|U\|_{\mathcal{H}}^{1/2}\|F\|_{\mathcal{H}}^{1/2}\\
&\leq &c|\gamma|\left(|\gamma|^{1/2}\|\mathbf{v}\|^{1/2} \|\Delta \mathbf{u}\|^{1/2}+  c\|U\|_{\mathcal{H}}^{1/2}\|F\|_{\mathcal{H}}^{1/2}\right)\|U\|_{\mathcal{H}}^{1/2}\|F\|_{\mathcal{H}}^{1/2}\\
&\leq &c|\gamma|\left(|\gamma|^{1/2}\|U\|_{\mathcal{H}}+  c\|U\|_{\mathcal{H}}^{1/2}\|F\|_{\mathcal{H}}^{1/2}\right)\|U\|_{\mathcal{H}}^{1/2}\|F\|_{\mathcal{H}}^{1/2}\\
&\leq &c|\gamma|^{3/2}\|U\|_{\mathcal{H}}^{3/2}\|F\|_{\mathcal{H}}^{1/2}+c\|U\|_{\mathcal{H}}\|F\|_{\mathcal{H}}.
\end{eqnarray*}
 So, we see that 
 
 \begin{equation*}\label{ini2}
 \int_\Omega |\gamma \Theta_n|^2\;d\Omega\leq \epsilon |\gamma|^2\|U\|_{\mathcal{H}}^2+c_\epsilon \|F\|_{\mathcal{H}}^2.
 \end{equation*}
 Hence inequality \eqref{ini} follows. Finally,  
from \eqref{xresn} we have 
 \begin{equation}\label{inires}
	i\gamma \Theta_n-\dfrac{1}{a_n}\left( \beta i\gamma \Delta \mathbf{u}+\sum_{j=0}^{n}b_j\Delta\Theta_{j}\right) -\dfrac{1}{a_n}\sum_{j=0}^{n-1}a_j\Theta_{j}=R_1,
 \end{equation} 
where 
$$
R_1=g_n-\mu\Theta_n +\dfrac{1}{ca_n}\beta \mu\Delta\mathbf{ u} + \dfrac{1}{ca_n}\beta \Delta f_1.
$$
Multiplying \eqref{inires} by $\overline{i\gamma \Delta \mathbf{u}}$ we get
\begin{eqnarray}\nonumber
\dfrac{\beta }{a_n}\int_{\Omega }|\gamma \Delta \mathbf{u}|^2\;d\Omega&=&\underbrace{i\gamma \int_{\Omega }\Theta_n\overline{i\gamma \Delta \mathbf{u}}\;d\Omega}_{\leq c_\epsilon\|\gamma\Theta_n\|^2+\epsilon\|\gamma\Delta \mathbf{u}\|^2}
-\underbrace{\int_{\Omega }\sum_{j=0}^{n}\dfrac{b_j}{a_n}\Delta\Theta_{j}\overline{i\gamma \Delta \mathbf{ u}}\;d\Omega}_{:=J}\\
	&&+\underbrace{\dfrac{1}{a_n}\beta\int_{\Omega }\sum_{j=0}^{n-1}a_j\Theta_{j}\overline{i\gamma \Delta\mathbf{ u}}d\Omega}_{\leq c\|U\|_{\mathcal{H}}\|F\||_{\mathcal{H}}+c_\epsilon\|\Theta_n\|^2+\epsilon\|\gamma\Delta \mathbf{u}\|^2}-\underbrace{\int_{\Omega }R_1\overline{i\gamma \Delta\mathbf{ u}}\;d\Omega}_{\leq c_\epsilon\|F\|_{\mathcal{H}}^2+\epsilon\|\gamma\Delta \mathbf{u}\|^2}.\label{jjj}
\end{eqnarray}
Poincare's inequality implies that  $\|\Theta_j\|^2\leq c\|U\|_{\mathcal{H}}\|F\|_{\mathcal{H}} $ for $j=0,\cdots,n-1$. To show inequality \eqref{ddu} we only need to estimate $\left| J\right| $. Indeed, 
$$
|J|=\left|
\int_{\Omega }\sum_{j=0}^{n}\frac{b_j}{a_n}\nabla\Theta_{j}\overline{i\gamma \nabla\Delta \mathbf{u}}\;d\Omega
\right|\leq c\sum_{j=0}^n|\gamma|\int_{\Omega }|\nabla\Theta_{j}|^2\;d\Omega+\epsilon|\gamma|\int_{\Omega }|\nabla\Delta\mathbf{ u}|^2\;d\Omega.
$$
From Lemma \ref{inqq1} we get 
\begin{eqnarray*}
\left|
J
\right|
\leq 
\epsilon |\gamma|^2\|U\|_{\mathcal{H}}^2+c_\epsilon\|F\|_{\mathcal{H}}^2.
\end{eqnarray*}
So we have
\begin{eqnarray*}
\int_{\Omega }|\gamma \Delta \mathbf{u}|^2\;d\Omega&\leq &	\epsilon |\gamma|^2\|U\|_{\mathcal{H}}^2+c_\epsilon\|F\|_{\mathcal{H}}^2.
\end{eqnarray*}
Inserting the above inequalities into \eqref{jjj} our conclusion follows. 

\end{proof}

\begin{theorem}
	The operator  $\mathfrak{B}$ generates  an analytic semigroup.
\end{theorem}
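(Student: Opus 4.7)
The plan is to apply Theorem \ref{TeoLiu}. The inclusion $i\mathbb{R}\subset\varrho(\mathfrak{B})$ is already Lemma \ref{iRcR}, so what remains is to establish the uniform resolvent estimate $|\gamma|^2\|U\|_{\mathcal{H}}^2\leq C\|F\|_{\mathcal{H}}^2$ for every solution $U$ of the system \eqref{xres1}--\eqref{xresn}.

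I would split $|\gamma|^2\|U\|_{\mathcal{H}}^2$ according to the four summands defining the inner product and handle each separately. The term $\kappa|\gamma|^2\|\Delta\mathbf{u}\|^2$ is exactly the content of \eqref{ddu} in Lemma \ref{Lem1}, up to the absorbable error $\epsilon|\gamma|^2\|U\|_{\mathcal{H}}^2$. For the intermediate temperature components $\Theta_k$ with $k<n$, I apply $\nabla$ to \eqref{xres3} to obtain the identity $(i\gamma+\mu)\nabla\Theta_k=\nabla\Theta_{k+1}+\nabla g_k$, and iterate it downwards from $k=n-1$, starting from the bound $\|\nabla\Theta_n\|^2\leq c\|U\|_{\mathcal{H}}\|F\|_{\mathcal{H}}\leq c\|F\|_{\mathcal{H}}^2$ provided by \eqref{Dissipa} and the dissipativity of $\mathfrak{B}$. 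This yields $|\gamma|^2\|\nabla\Theta_k\|^2\leq c\|F\|_{\mathcal{H}}^2$ for each $k<n$; combining with \eqref{ini} and the Poincaré inequality $\|\Theta_j\|\leq c\|\nabla\Theta_j\|$ gives the estimate for $|\gamma|^2\|\vartheta\|^2$ since $\vartheta=\sum_j a_j\Theta_j$.

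The remaining and most delicate piece is $|\gamma|^2\|\mathbf{v}\|^2$, because the naive substitution $\mathbf{v}=(i\gamma+\mu)\mathbf{u}-f_1$ from \eqref{xres1} would cost an extra power of $|\gamma|$. Instead I would multiply \eqref{xres2} by $\overline{\mathbf{v}}$, integrate by parts using the boundary conditions to rewrite $\int_\Omega\Delta^2\mathbf{u}\,\overline{\mathbf{v}}\,d\Omega=\int_\Omega\Delta\mathbf{u}\,\overline{\Delta\mathbf{v}}\,d\Omega$, and then substitute $\Delta\mathbf{v}=(i\gamma+\mu)\Delta\mathbf{u}-\Delta f_1$ (legitimate because $f_1\in H^2\cap H_0^1$). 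Taking the imaginary part produces the identity
\[
\gamma\bigl(\|\mathbf{v}\|^2-\kappa\|\Delta\mathbf{u}\|^2\bigr)=\mathrm{Im}\!\left[\kappa\!\int_\Omega \Delta\mathbf{u}\,\overline{\Delta f_1}\,d\Omega+\beta\!\int_\Omega \Delta\vartheta\,\overline{\mathbf{v}}\,d\Omega+\int_\Omega f_2\,\overline{\mathbf{v}}\,d\Omega\right],
\]
after which, multiplying by $|\gamma|$ and applying Young's inequality together with Lemma \ref{inqq1} and the estimates already derived absorbs every cross term into $\epsilon|\gamma|^2\|U\|_{\mathcal{H}}^2+c_\epsilon\|F\|_{\mathcal{H}}^2$.

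Summing the four partial bounds and choosing $\epsilon$ small enough to absorb the common error term into the left-hand side produces $|\gamma|^2\|U\|_{\mathcal{H}}^2\leq C\|F\|_{\mathcal{H}}^2$, completing the proof via Theorem \ref{TeoLiu}. The main obstacle lies in this last absorption argument for $|\gamma|^2\|\mathbf{v}\|^2$: the cross terms couple factors involving $\nabla\Delta\mathbf{u}$ and $\Delta\vartheta$, neither of which is directly controlled by $\|U\|_{\mathcal{H}}$, and they must be balanced through Young's inequality against the $|\gamma|^{1/2}$ loss inherent in Lemma \ref{inqq1} without sacrificing an additional power of $|\gamma|$.
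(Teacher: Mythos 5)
Your proposal is correct and follows essentially the same route as the paper: the bound for $|\gamma|^2\|\mathbf{v}\|^2$ via the multiplier $\overline{\mathbf{v}}$ (imaginary part, then multiplication by $\gamma$) is algebraically identical to the paper's multiplication of \eqref{xres2} by $\overline{i\gamma\mathbf{v}}$, with the cross term $\beta\int_\Omega i\gamma\nabla\vartheta\,\overline{\nabla\mathbf{v}}\,d\Omega$ absorbed through \eqref{estxxx}, \eqref{Dissipa} and Young's inequality exactly as in the paper, and the remaining components are handled by Lemma \ref{Lem1} and the iteration of \eqref{xres3}, which is the content of \eqref{xyxy}. If anything, your explicit downward iteration for the intermediate gradients $\nabla\Theta_k$ is a cleaner statement of what \eqref{xyxy} is meant to assert.
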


	\begin {proof}
Multiplying  equation \eqref{xres2} by $\overline{i\gamma \mathbf{v}}$ and using \eqref{xres1} we get

$$
i\gamma \mathbf{v}+\mu \mathbf{v}+  \kappa\Delta ^2\mathbf{u}-\beta\Delta \vartheta =f_2,
$$
 \begin{align}
 \int_{\Omega}|\gamma \mathbf{v}|^2-i\gamma \mu |\mathbf{v}|^2\;d\Omega=\underbrace{\dfrac{1}{\rho}\int_{\Omega}\kappa i\gamma \Delta\mathbf{ u} \overline{\Delta \mathbf{v}}\;d\Omega}_{\leq c\gamma^2\|\Delta \mathbf{u}\|^2+c\|F\|_\mathcal{H}^2}+\underbrace{\beta\int_{\Omega}i\gamma \nabla \vartheta\overline{\nabla \mathbf{v}}\;d\Omega}_{:=J_1}+\underbrace{\int_{\Omega}f_2\overline{i\gamma \mathbf{v}}d\Omega}_{\leq \epsilon \gamma^2\|\mathbf{v}\|^2+c\|F\|_\mathcal{H}^2}.\label{qres2}
\end{align}
Using \eqref{estxxx}  and \eqref{Dissipa} we get that 
\begin{equation*}\label{jooo}
J_1\leq \epsilon|\gamma|^2\|U\|^2_{\mathcal{H}}+c_\epsilon\|F\|^2_{\mathcal{H}}.
\end{equation*}
From Lemma \ref{Lem1} and taking the real part in \eqref{qres2} we get 

 \begin{align}
 \int_{\Omega}|\gamma \mathbf{v}|^2\;d\Omega\leq \epsilon|\gamma|^2\|U\|^2_{\mathcal{H}}+c_\epsilon\|F\|^2_{\mathcal{H}}\label{xxg}.
\end{align}
Moreover, by definition we have that 
\begin{equation}\label{xyxy}
\sum_{j=0}^{n-1} \int_{\Omega}\left|\gamma\nabla \Theta_{j} \right|^2d\Omega\leq c\sum_{j=1}^{n} \int_{\Omega}\left|\gamma\nabla \Theta_{j} \right|^2d\Omega+c\|F\|_{\mathcal{H}}^2.
\end{equation}
From \eqref{ddu}, \eqref{ini},  \eqref{xxg} and \eqref{xyxy} and recalling the definition of the norm of  $U$ we conclude that 
$$
|\gamma|^2\|U\|^2_{\mathcal{H}}\leq \epsilon |\gamma|^2\|U\|^2_{\mathcal{H}}+c_\epsilon\|F\|^2_{\mathcal{H}}.
$$
From where our conclusion follows.

	\end{proof}

\noindent 
As a consequence we have 
\begin{theorem}
The semigroup generated by the operator $\mathcal{A}$ is analytic.
\end{theorem}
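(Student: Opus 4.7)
The result is essentially a corollary of the previous theorem, which established analyticity for the perturbed operator $\mathfrak{B}=\mathcal{A}-2c_0\mathbf{I}$. The plan is to transfer analyticity from $\mathfrak{B}$ back to $\mathcal{A}$ by exploiting the fact that the two generators differ by a scalar multiple of the identity.

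First I would note that since $\mathbf{I}$ commutes with $\mathcal{A}$ (equivalently with $\mathfrak{B}$), the exponential factors as
\[
e^{\mathcal{A}t}=e^{(\mathfrak{B}+2c_0\mathbf{I})t}=e^{\mathfrak{B}t}\,e^{2c_0 t},
\]
which was already observed in the discussion preceding Theorem \ref{TeoLiu}. The scalar factor $e^{2c_0 t}$ is an entire function of $t$, so it extends holomorphically to any sector in $\mathbb{C}$. Hence $e^{\mathcal{A}t}$ inherits the holomorphic extension of $e^{\mathfrak{B}t}$ to a sector containing the positive real axis, with values in $\mathcal{L}(\mathcal{H})$; that is precisely the definition of an analytic semigroup.

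Equivalently, one can argue via the characterization in Theorem \ref{TeoLiu}: analyticity is invariant under bounded perturbations of the generator, and $2c_0\mathbf{I}$ is bounded on $\mathcal{H}$. This gives the same conclusion without revisiting the resolvent estimates.

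I do not anticipate any real obstacle: both the factorization argument and the bounded-perturbation argument are standard and short. The only point to state carefully is that multiplying by a scalar entire factor preserves the quantitative analytic-semigroup estimate (in particular the bound on $\|\gamma(i\gamma I-\mathcal{A})^{-1}\|$ on the imaginary axis shifted by $2c_0$), so the conclusion follows immediately from the previous theorem.
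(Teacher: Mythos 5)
Your argument is correct and is essentially the same as the paper's: the text preceding Theorem \ref{TeoLiu} already records the factorization $e^{\mathcal{A}t}=e^{\mathfrak{B}t}e^{2c_0t}$ and the equivalence of analyticity for the two semigroups, and the theorem is then stated as an immediate consequence of the analyticity of $e^{\mathfrak{B}t}$. Your additional remark on bounded perturbations is a standard alternative phrasing of the same fact and introduces nothing problematic.
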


\section{Local heat conduction}\label{Gevrey}
Here we consider $\Omega=\Omega_1\cup \Omega_2\subset \mathbb{R}^2$ an open set such that  the thermal effect is efective only over $\Omega_1$ (see figure 1). Our main result is to prove that the operator $\mathfrak{B}$ given in \eqref{opeA1} generates  a Gevrey  semigroup of class 4 for $t>0$

With the same notations as in the sections above the corresponding model is given by 
\begin{align}
	\rho \widehat{u}_{tt}&=-\kappa _1\Delta ^2 \widehat{u}-\beta \Delta \widehat{\theta }& \text{in} \ \Omega_1 \times \mathbb{R}^{+}_{0}, \label{Lb1}\\
	c_T\dfrac{\partial}{\partial t}\left(\sum_{j=0}^na_j\theta^{(j)}\right)&= \left(\sum_{j=0}^nb_j\Delta \theta^{(j)}\right) +\beta \Delta \widehat{u}_t& \text{in} \ \Omega_1 \times \mathbb{R}^{+}_{0},\label{Lb2}\\
	\rho \widehat{v}_{tt}&=-\kappa_2 \Delta ^2 \widehat{v} & \text{in} \ \Omega_2 \times \mathbb{R}^{+}_{0}, \label{Lb3}
\end{align}	
Here $\widehat{u}$ is the displacement, $\vartheta= a_0\theta+a_1\theta^{(1)}+...+a_n\theta^{(n)}$ which is the temperature effective only in $\Omega_1$.
We adjoin the boundary conditions 
\begin{align}\label{fronteraloc}
	\widehat{u}=\Delta \widehat{u}=\widehat {\theta}=0  \ \ \text{on} \  \ \Gamma_0,\quad \widehat {\theta}=0 \ \ \text{on} \  \  \Gamma_1, \ \ t>0 .
\end{align}
Verifying the transmission conditions 
\begin{equation}\label{trans1}
	\widehat{u}(x,t)=\widehat{v}(x,t),\quad \frac{\partial \widehat{u}}{\partial\nu}(x,t)=\frac{\partial \widehat{v}}{\partial\nu}(x,t) \ \ \text{on} \  \ \Gamma_1,
\end{equation}
\begin{equation}\label{trans2}
	\kappa_1 \Delta \widehat{u}+\beta\widehat{\theta} =\kappa_2 \Delta\widehat{v} ,\quad \kappa_1\frac{\partial \Delta \widehat{u}}{\partial\nu}+\beta \frac{\partial \widehat{\theta}}{\partial\nu}=\kappa_2\frac{\partial \Delta \widehat{v}}{\partial\nu} \ \ \text{on} \  \ \Gamma_1.
\end{equation}
Additionally, we consider the following initial conditions

\begin{align}
	\widehat{u}(x,0)=\widehat{u}_0(x),\;\; \widehat{u}_t(x,0)=\widehat{u}_1(x), \quad \theta^{(j)}(x,0)=\eta_j(x) \label{Lb33n2}
\end{align}
\begin{equation}
\widehat{v}(x,0)=\widehat{v}_0(x),\;\; \widehat{v}_t(x,0)=\widehat{v
}_1(x).\label{Lb33xx}
\end{equation}
The total energy associated with the system \eqref{Lb1}-\eqref{Lb33xx} is defined by

\begin{align*}
2	E(t)&=\int_{\Omega_1 }  \left| \widehat{u}_{t}\right|^2+ \kappa_1\left|\Delta  \widehat{u} \right| ^2+ \sum_{j=0}^{n-1}\left|\nabla\theta^{(j)}\right| ^2 + \left|\widehat{\theta}\right|^2  d\Omega+\int_{\Omega_2 }   \left|\widehat{v}_t\right| ^2+ \kappa_2\left| \Delta \widehat{v} \right| ^2  d\Omega.
\end{align*}
As in section 2, we have 
\begin{align*}
	\dfrac{d}{dt}E(t)\leq -\frac{a_nb_n}{2}\int_{\Omega _1}|\nabla\theta^{(n)}|^2\; d\Omega+c\int_{\Omega _1}  \sum_{j=0}^{n-1}| \nabla\theta^{(j)}|^2\;d\Omega.
\end{align*}
The system once more,  is not dissipative in general.

\subsection{Existence: Semigroup approach}

	As in section 2 we considerar $\rho=c_T=1$. 	Let us introduce the following notations 
				\begin{eqnarray*}
			\mathds{H}^m(\Omega)&=&\left\lbrace (\mathbf{u},\mathbf{v})^T\in H^m (\Omega_1)\times H^m(\Omega_2)\right\rbrace .\\
			\mathds{H}^0(\Omega)&=&\left\lbrace (\mathbf{u},\mathbf{v})^T\in L^2 (\Omega_1)\times L^2(\Omega_2)\right\rbrace .\\
						\mathds{H}^2_\Gamma(\Omega)&=&\left\lbrace (\mathbf{u},\mathbf{v})^T\in H^2 (\Omega_1)\times H^2(\Omega_2),  \text{verifying \eqref{trans1}} \ \ \text{on} \ \Gamma_1 \right\rbrace .
		\end{eqnarray*}
Under the above notation we define the phase space 
		\begin{align*}
			\mathcal{H}=\mathds{H}^2_{\Gamma}(\Omega)\times \mathds{H}^0(\Omega)\times [H^1_0(\Omega_1)]^n\times [L^2(\Omega_1)]^n,
		\end{align*}
		which is a Hilbert space with the norm
	\begin{align*}
	\left\|U \right\|_{\mathcal{H}}^2=\int_{\Omega_1 } \left|\mathbf{w}\right|^2+ \kappa_1\left|\Delta \mathbf{u} \right| ^2+  \sum_{j=0}^{n-1}\left|\nabla \Theta_j\right| ^2 + \left|\sum_{j=0}^{n}a_i\Theta_i\right|^2   d\Omega+\int_{\Omega_2 } \left| \mathbf{z} \right| ^2+ \kappa_2\left| \Delta \mathbf{v} \right| ^2 d\Omega, 
\end{align*}
where $U=(\mathbf{u},\mathbf{v},\mathbf{w},\mathbf{z},,\Theta_0,\Theta_1,...,\Theta_n)$. Denoting $ \widehat{u}_t=\mathbf{w}, \ \widehat{v}_t=\mathbf{z}$  problem \eqref{Lb1}-\eqref{Lb33xx} can be written as 
$$\dfrac{dU}{dt}=AU, \   U(0)=U_0.$$
where 
\begin{align}
	\mathcal{A}U=\begin{pmatrix}
	\mathbf{w}\\
	\mathbf{	z}\\
		-\Delta(\kappa_1\Delta \mathbf{u} +\beta(a_0\Theta_0+a_1\Theta_1+...+a_n\Theta_n))\\
		-\kappa_2 \Delta^2 \mathbf{v}\\
		\Theta_1 \\
		\Theta_2\\
		.\\
		.\\
		.\\
		\Theta_n\\
	\displaystyle	\dfrac{\beta}{ a_n}\Delta \mathbf{v}+\dfrac{1}{a_n} \sum_{j=0}^{n}b_j\Delta\Theta_j-\dfrac{1}{a_n}\sum_{j=0}^{n-1}a_j\Theta_{j}
	\end{pmatrix} . 
	\label{opeA1}
\end{align}
with
\begin{align*}
	D(\mathcal{A})=\left\lbrace U\in \mathcal{H}: (\mathbf{u},\mathbf{v})^T\in  \mathds{H}_{\Gamma}^2 ,(\kappa_1\Delta \mathbf{u}+\beta \vartheta,\kappa_2\Delta \mathbf{z})^T\in  \mathds{H}_{\Gamma}^2,\;\;\sum_{j=0}^{n}b_j \Theta_j\in H^2(\Omega _1) \right\rbrace .
\end{align*}


Using the same reasoning as in the first problem we have that under the above condition the operator $ \mathcal{A}$ is not dissipative, in fact 
\begin{align*}
	\mbox{Re }\left\langle \mathcal{A}U,U \right\rangle =	\mbox{Re } \int_{\Omega_1}\left( \sum_{k=0}^{n-1}\nabla \Theta_{k+1}\overline{\nabla\Theta}_k-\sum_{k,\ell=0}^{n}a_k b_\ell\nabla \Theta_{\ell}\overline{\nabla\Theta}_{k}\right) \ d\Omega.
\end{align*}
Using similar reasoning as in \eqref{EnergIn}.
\begin{align*}
	\mbox{Re }\left\langle \mathcal{A}U,U \right\rangle\leq -\frac{a_nb_n}{2}\int_{\Omega_1 }|\nabla\Theta_{n}|^2\; d\Omega+c_0\int_{\Omega _1}  \sum_{j=0}^{n-1}| \nabla\Theta_{j}|^2\;d\Omega.
\end{align*}
To apply the semigroup theory of dissipative generators, we consider a continuous perturbation of $\mathcal{A}$ given by $\mathfrak{B}=\mathcal{A}-2c_0\mathbf{I}$. It is clear that 
$D(\mathfrak{B})=D(\mathcal{A})$ and 
\begin{align}\label{nondis1}
	\mbox{Re }\left\langle \mathfrak{B}U,U \right\rangle\leq -\frac{a_nb_n}{2}\int_{\Omega_1 } \left| \nabla\Theta_{n}\right| ^2\; d\Omega-c_0\left\|U \right\|^2 .
\end{align}

Hence to show that $\mathfrak{B}$ is the infinitesimal generator of a contraction semigroup, it is enough to show that $0\in \varrho(\mathfrak{B})$. Following the same line of reasoning as that used in the proof of Theorem \ref{Existe1} we can establish.

\begin{theorem}\label{Existe2}
	The operator $\mathcal{A}$ generates a quasi-contractive semigroup.
\end{theorem}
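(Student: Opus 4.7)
The plan is to mimic the proof of Theorem \ref{Existe1}, upgrading every step to accommodate the composite domain $\Omega=\Omega_1\cup\Omega_2$ and the transmission conditions \eqref{trans1}--\eqref{trans2}. As before, I would work with the shifted operator $\mathfrak{B}=\mathcal{A}-2c_0\mathbf{I}$, which is dissipative by \eqref{nondis1}, and reduce the proof to establishing $0\in\varrho(\mathfrak{B})$. By the Lumer--Phillips theorem this immediately yields that $\mathfrak{B}$ generates a contraction semigroup, hence $\mathcal{A}$ generates a quasi-contractive semigroup.

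To show $0\in\varrho(\mathfrak{B})$, I would introduce the sesquilinear form
\[
a(U^1,U^2)=(-\mathcal{A}U^1,U^2)_{\mathcal{H}}+2c_0(U^1,U^2)_{\mathcal{H}}
\]
on the natural energy space (essentially $\mathcal{H}$, with the first components lying in $\mathds{H}^2_\Gamma(\Omega)$). A componentwise integration by parts, applied separately on $\Omega_1$ and $\Omega_2$, produces boundary contributions on $\Gamma_1$. The contributions from the biharmonic terms on each side combine into expressions involving the jumps $[\kappa\Delta\cdot]$ and $[\kappa\partial_\nu\Delta\cdot]$; the coupling with $\nabla\vartheta$ produces the $\beta\theta$ and $\beta\partial_\nu\theta$ boundary terms. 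Because the test/trial functions live in $\mathds{H}^2_\Gamma(\Omega)$ (so that \eqref{trans1} is a \emph{hard} constraint), all these boundary contributions cancel when written as the difference between the two sides of \eqref{trans2}, giving a clean bulk expression identical in structure to the one in \eqref{Operator} but restricted to $\Omega_1$ for the thermal and coupling terms. Continuity of $a(\cdot,\cdot)$ is then immediate, and coercivity follows from the pointwise inequality used to derive \eqref{nondis1}:
\[
a(U,U)\ \geq\ \tfrac{a_nb_n}{2}\int_{\Omega_1}|\nabla\Theta_n|^2\,d\Omega+c_0\|U\|_{\mathcal{H}}^2.
\]

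Given $F\in\mathcal{H}$, the Lax--Milgram lemma produces a unique $U\in\mathcal{H}$ with $a(U,U^*)=(F,U^*)_{\mathcal{H}}$ for every admissible $U^*$. Testing against smooth functions compactly supported in $\Omega_1$ or $\Omega_2$ recovers the equations \eqref{Lb1}--\eqref{Lb3} in the distributional sense on each subdomain. The remaining task is to promote this weak solution to an element of $D(\mathcal{A})$, namely to recover (i) the full $H^2$-type regularity of $\mathbf{u}$, $\mathbf{v}$, $\Theta_j$, and $\kappa_1\Delta\mathbf{u}+\beta\vartheta$, and (ii) the transmission condition \eqref{trans2}. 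This is where the argument is more delicate than in the global case: by choosing test functions that do not vanish on $\Gamma_1$ (but which still satisfy \eqref{trans1}), the vanishing of the boundary terms in $a(U,U^*)-(F,U^*)_{\mathcal{H}}$ forces the jump relations \eqref{trans2} to hold as natural boundary conditions. Standard transmission elliptic regularity (applied separately to the biharmonic system for $(\mathbf{u},\mathbf{v})$ with the two jump conditions, and to the Laplacian system for the temperature on $\Omega_1$ with Dirichlet data on $\Gamma_0\cup\Gamma_1$) then gives $U\in D(\mathcal{A})$ and $-\mathfrak{B}U=F$.

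The main obstacle is the bookkeeping on $\Gamma_1$: one must verify that the essential condition \eqref{trans1} built into $\mathds{H}^2_\Gamma(\Omega)$ leaves room, in the space of admissible test functions, for enough freedom on $\Gamma_1$ to force \eqref{trans2} as the natural condition, while the second relation in \eqref{trans2} (involving $\kappa_1\partial_\nu\Delta\mathbf{u}+\beta\partial_\nu\vartheta=\kappa_2\partial_\nu\Delta\mathbf{v}$) emerges only after a second integration by parts in the biharmonic term. Once that identification is carried out carefully, the argument is formally identical to that of Theorem \ref{Existe1} and the conclusion follows.
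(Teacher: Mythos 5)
Your proposal follows exactly the route the paper takes: the paper's proof of this theorem is simply the observation that the argument of Theorem \ref{Existe1} (shift by $2c_0\mathbf{I}$, Lax--Milgram for the coercive form $a(\cdot,\cdot)$, then elliptic regularity to land in $D(\mathcal{A})$) carries over, and your additional care with the essential condition \eqref{trans1} versus the natural condition \eqref{trans2} on $\Gamma_1$ is precisely the bookkeeping that makes that carry-over legitimate. The proposal is correct and matches the paper's approach.
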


\noindent
We conclude this section by establishing the Theorem of Intermediate Derivatives, whose proof can be seen in  \cite{Adams}.

\begin{proposition}\label{p8}
	
	Let $\Omega\subset \mathbb{R}^N$ an open set then exist a constant $\epsilon_0>0$ and a constant $K(\epsilon_0,m,p,\Omega)$, such that for all $ 0<\epsilon<\epsilon_0$ and $0<j<m$ that satisfying
	\begin{align*}
	\left\|D^j u \right\|_{L^p(\Omega)} \leq K \left( \epsilon^{m-j}\left\|D^m u \right\|_{L^p(\Omega)} +\epsilon ^j\left\|u \right\|_{L^p(\Omega)} \right) .
	\end{align*}
	By $\left\| D^j u\right\| _{L^p(\Omega)}$ we are denoting the norm in $L^p$ of all derivatives of order $j$.
\end{proposition}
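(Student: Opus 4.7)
My plan is to establish the inequality by first proving the base case $j=1$, $m=2$ on a bounded one-dimensional interval, then inserting the scaling parameter $\epsilon$ via a change of variables, passing to $\R^N$ by a covering and coordinate-wise application, and finally iterating to reach arbitrary $0<j<m$. Throughout, I would assume $\Omega$ admits a bounded extension operator $E:W^{m,p}(\Omega)\to W^{m,p}(\R^N)$ (a minimal regularity hypothesis on $\partial\Omega$, as used in Adams); the constant $K(\epsilon_0,m,p,\Omega)$ then absorbs the norm of $E$ and the covering geometry.

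For the one-dimensional base case, I would show that for $u\in W^{2,p}([0,1])$,
$$\|u'\|_{L^p([0,1])}\le C\bigl(\|u''\|_{L^p([0,1])}+\|u\|_{L^p([0,1])}\bigr).$$
The standard argument splits $[0,1]$ into three equal subintervals: the mean value theorem for integrals applied to the outer two yields points $\xi_1,\xi_3$ with $|u(\xi_i)|\le C\|u\|_{L^p}$, and a further application of the mean value theorem between $\xi_1$ and $\xi_3$ produces $\eta$ with $|u'(\eta)|\le C(|u(\xi_1)|+|u(\xi_3)|)$. Writing $u'(x)=u'(\eta)+\int_\eta^x u''$, applying H\"older, and integrating the $p$-th power in $x$ delivers the base estimate. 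Setting $v(y)=u(\epsilon y)$ and substituting into the bound converts it into
$$\|u'\|_{L^p}\le K\bigl(\epsilon\|u''\|_{L^p}+\epsilon^{-1}\|u\|_{L^p}\bigr)$$
for every $0<\epsilon<\epsilon_0$.

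To transfer this to $N$ dimensions I would cover $\R^N$ (hence $\Omega$ after extension) by unit cubes and apply the one-dimensional bound along each coordinate direction to each partial derivative, then sum $p$-th powers. For general $0<j<m$ the inequality would follow by iteration: the base case applied to a derivative of order $j-1$ gives
$$\|D^j u\|_{L^p}\le C\bigl(\epsilon\|D^{j+1}u\|_{L^p}+\epsilon^{-1}\|D^{j-1}u\|_{L^p}\bigr),$$
and chaining this recursion while rebalancing the $\epsilon$-budget at each step produces the announced exponents $\epsilon^{m-j}$ and $\epsilon^j$.

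The main obstacle will be the bookkeeping in this final iteration: one must verify that the constants produced at each step remain controlled by a single constant $K$ depending only on $m,p,\Omega$, and that the $\epsilon$-powers can be split so that only the two endpoint terms $\epsilon^{m-j}\|D^m u\|_{L^p}$ and $\epsilon^j\|u\|_{L^p}$ survive on the right-hand side, with every intermediate derivative absorbed. A clean induction on $m-j$, using the inductive hypothesis with a rescaled $\epsilon$ at each level to eliminate the lower-order term, should resolve the bookkeeping and deliver the stated form.
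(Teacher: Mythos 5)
Your sketch is, in outline, the standard proof of the interpolation inequality in \cite{Adams} (Theorem 4.14 there), which is all the paper itself offers — its ``proof'' is the bare citation — so there is no competing argument to compare against; the one genuine difference of route is that you pass to $\mathbb{R}^N$ through an extension operator, whereas Adams works directly under the cone condition. Either way, some regularity of $\Omega$ beyond ``open set'' must be assumed (the inequality fails on cusped domains), and with the extension route you must note that $\|D^m(Eu)\|_{L^p(\mathbb{R}^N)}$ is controlled by the full $W^{m,p}(\Omega)$-norm of $u$, not by $\|D^mu\|_{L^p(\Omega)}+\|u\|_{L^p(\Omega)}$ alone, so the intermediate derivatives reappear on the right-hand side and must be reabsorbed (cleanest: prove the inequality on $\mathbb{R}^N$ first, deduce the equivalence of norms, then extend). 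The mixed partials also deserve a word: the coordinate-wise one-dimensional lemma controls $\partial_i^j u$ directly, but derivatives such as $\partial_1\partial_2 u$ need the extra inductive step that Adams carries out.

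The substantive problem is your final step. Your base case is $\|u'\|_{L^p}\le K(\epsilon\|u''\|_{L^p}+\epsilon^{-1}\|u\|_{L^p})$, with a \emph{negative} power of $\epsilon$ on the zero-order term, and iterating $\|D^ju\|\le C(\epsilon\|D^{j+1}u\|+\epsilon^{-1}\|D^{j-1}u\|)$ can only ever produce a negative power there; the correct conclusion is $\|D^ju\|_{L^p}\le K\left(\epsilon^{m-j}\|D^mu\|_{L^p}+\epsilon^{-j}\|u\|_{L^p}\right)$, i.e.\ Adams' form after substituting $\epsilon\mapsto\epsilon^{m-j}$. The exponents $\epsilon^{m-j}$ and $\epsilon^{+j}$ ``announced'' in the proposition cannot both be positive: letting $\epsilon\to0$ would force $D^ju\equiv 0$, so the printed statement is false as written and your closing promise to derive those exponents cannot be kept. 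You should flag the sign typo rather than chase it; note that wherever the paper actually invokes the proposition it is in the multiplicative form $\|D^ju\|\le C\|D^mu\|^{j/m}\|u\|^{1-j/m}$ (for instance \eqref{TrazH3} in Lemma \ref{TrazoInter}), which is obtained by optimizing over $\epsilon$ and is only possible when the two exponents have opposite signs.
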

\begin{proof}
	See \cite{Adams} (see also \cite{Meqd} for the one dimensional case). 
\end{proof}

\subsection{Gevrey's class for the radial symmetrical  plate model }
 
Here we consider symmetrical and radial solutions. 
Let us denote by $O(2)$ the set of orthogonal $n\times n$ real
matrices and by $SO(2)$ the set of matrices in $O(2)$ which have
determinant 1.
\begin{lemma} \label{le2.2}
 Assume that the initial data  of problem \eqref{Lb1}-\eqref{Lb33xx}satisfies 
\begin{eqnarray}
 \begin{array}{ll}
\widehat{u}_0(Gx)=  \widehat{u}_0({{x}}),\; \widehat{u}_1(G \,{{x}})=  \widehat{u}_1({{x}}),\; \eta_j(Gx,0) 
  = \eta_j({{x}}), & \forall\; {{x}}\in\bar \Omega_1,  \\
  \noalign{\medskip}
\widehat{v}_0(G\, {{x}})=  \widehat{v}_0({{x}}),\; \widehat{v}_1(G \,{{x}})=  \widehat{u}_1({{x}}),
  & \forall\; {{x}}\in\bar \Omega_0,  \\
  \noalign{\medskip}
  G\in O(2)\quad\text{if}\quad n=2\quad\text{and}\quad
  G\in SO(n)\quad\text{if}\quad n\geq 3. & \label{2.5}
 \end{array}
\end{eqnarray}
Then the corresponding solution $(\widehat{u},\widehat{v},\widehat{\theta})$  verifies 
\begin{eqnarray}
 \begin{array}{llll}
\widehat{v}({{x}},t)=\phi (r,t),&  
  &\forall\; {{x}}\in \Omega_1,&t\geq 0, \\
  \noalign{\medskip}
  \theta({{x}},t)=\psi (r,t), & & \forall\; {{x}}\in\Omega_1,&t\geq 0, \\
  \noalign{\medskip}
\widehat{v}({{x}},t)=\,\eta (r,t),& 
  &\forall\; {{x}}\in\Omega_0,&t\geq 0,
 \end{array} \label{2.8}
\end{eqnarray}
where $r=|{{x}}|$, for some functions $\phi$, $\psi$, $\eta$.
\end{lemma}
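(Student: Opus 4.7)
The plan is to exploit the rotational invariance of the PDE system together with the uniqueness of solutions afforded by the quasi-contractive semigroup of Theorem \ref{Existe2}. For any orthogonal matrix $G$ in the relevant group (that is, $G\in O(2)$ when $n=2$ and $G\in SO(n)$ otherwise), define the transformed functions
\begin{equation*}
\tilde u(x,t)=\widehat u(Gx,t),\quad \tilde v(x,t)=\widehat v(Gx,t),\quad \tilde\theta^{(j)}(x,t)=\theta^{(j)}(Gx,t),
\end{equation*}
and form the corresponding vector $\tilde U$. The idea is to show that $\tilde U$ solves the same Cauchy problem as $U$, so that by uniqueness $\tilde U=U$ for every admissible $G$, which forces radial dependence.

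First I would verify that $\tilde U$ satisfies the system \eqref{Lb1}--\eqref{Lb3}. Since $G$ is orthogonal, the standard chain rule gives $\Delta(f\circ G)=(\Delta f)\circ G$ and hence $\Delta^2(f\circ G)=(\Delta^2 f)\circ G$; the equations are therefore invariant under composition with $G$. Because the geometric setup takes $\Omega_0$ and $\Omega_2$ to be concentric disks (so that $\Gamma_0$ and $\Gamma_1$ are circles centered at the origin), we have $G\Omega_1=\Omega_1$, $G\Omega_2=\Omega_2$, and the outward unit normal on $\Gamma_0\cup\Gamma_1$ transforms as $\nu(Gx)=G\nu(x)$. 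Consequently $\partial_\nu(\widehat u\circ G)(x)=\nabla\widehat u(Gx)\cdot G\nu(x)=(\partial_\nu \widehat u)(Gx)$, and the boundary conditions \eqref{fronteraloc} together with the transmission conditions \eqref{trans1}--\eqref{trans2} are preserved by the transformation.

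Next I would match initial data. By hypothesis \eqref{2.5} the initial data is invariant under $G$, so the initial conditions satisfied by $\tilde U$ coincide with those of $U$. Theorem \ref{Existe2} ensures the Cauchy problem is well-posed in $\mathcal{H}$, which in particular yields uniqueness; therefore $\tilde U(t)=U(t)$ for all $t\geq 0$. Since $G$ is arbitrary in the stated symmetry group, we conclude
\begin{equation*}
\widehat u(Gx,t)=\widehat u(x,t),\quad \widehat v(Gx,t)=\widehat v(x,t),\quad \theta^{(j)}(Gx,t)=\theta^{(j)}(x,t),
\end{equation*}
for every admissible $G$ and every $(x,t)$. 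A function on $\mathbb{R}^n$ invariant under all rotations (in dimension two, under all orthogonal transformations) depends on $x$ only through $r=|x|$, which gives the required representations $\widehat u(x,t)=\phi(r,t)$, $\theta(x,t)=\psi(r,t)$ and $\widehat v(x,t)=\eta(r,t)$.

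The only non-routine step is the invariance check for the transmission conditions, since they mix second-order and third-order normal derivatives with $\beta\theta$ on $\Gamma_1$; however, once one notes that each of $\Delta$, $\partial_\nu$ and $\partial_\nu\Delta$ commutes with precomposition by $G$ on the rotationally symmetric interface, this is immediate. All the remaining work is bookkeeping, so I expect no substantial obstacle.
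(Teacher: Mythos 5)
Your argument is correct and is precisely the standard symmetry-plus-uniqueness reasoning that the paper has in mind: the paper itself dismisses this lemma with ``The proof is immediate,'' so your write-up simply supplies the details (rotational invariance of $\Delta$ and $\Delta^2$, invariance of the concentric circular boundaries and of the transmission conditions, and uniqueness from the quasi-contractive semigroup of Theorem \ref{Existe2}) that the authors chose to omit. No gap; the only implicit hypothesis you rightly make explicit is that $\Omega_0$ and $\Omega_2$ are concentric disks so that $G\Omega_i=\Omega_i$, which the paper assumes tacitly via Figure 1 and the radial trace estimates of Lemma \ref{TrazoInter}.
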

\begin{proof}
The proof is inmediate. 
\end{proof}

Denoting by $\mathcal{H}_R$ the space $\mathcal{H}$ for radial symmetrical function, thanks to Lemma \ref{le2.2} we conclude that 
the semigroup $S(t)=e^{\mathcal{A}t}$ is invariant over $\mathcal{H}_R$, that is to say $S(t)\mathcal{H}_R\subset \mathcal{H}_R$ and the corresponding infinitesimal generator is given by $\mathcal{A}$ defined over the domain $D_R(\mathcal{A})=\mathcal{H}_R\cap D(\mathcal{A})$

\begin{theorem} \label{theoGe}
	Let $S(t)=e^{\mathcal{A}t}$ be a contraction semigroup on a Hilbert space $X$.  Suppose that the infinitesimal generator $\mathcal{A}$ satisfies 
	\begin{eqnarray*} \label{gege}
		i\mathbb{R}\subset \rho(\mathcal{A}),\quad \mbox{and }\quad   \lim_{\lambda \in \mathbb{R}, \ |\lambda| \rightarrow \infty} \sup |\lambda|^{\varsigma}\|(i \lambda - \mathcal{A})^{-1} \|_{\mathcal{L}(\mathcal{H})} < \infty, 
	\end{eqnarray*}
	for some $0< \varsigma<1$. 
	Then, $S(t)$ is of Gevrey's class $\dfrac{1}{\varsigma}$ for $t > 0$.
\end{theorem}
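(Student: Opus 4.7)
My plan is to follow the standard Dunford-integral approach, as presented for example in Taylor's original paper or in Liu-Zheng, adapting it to the hypothesis stated. I would prove the theorem in four steps, moving from the resolvent bound on the imaginary axis to the Gevrey estimate $\|\mathcal{A}^n S(t)\| \leq C K^n (n!)^{1/\varsigma} t^{-n/\varsigma}$, which characterizes Gevrey class $1/\varsigma$ regularity.

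First, I would extend the resolvent set into a parabolic-type region surrounding the imaginary axis. Writing $\lambda = \sigma + i\tau$ with $|\tau|$ large, the identity
\begin{equation*}
(\lambda - \mathcal{A})^{-1} = (i\tau - \mathcal{A})^{-1} \bigl[ I - \sigma (i\tau - \mathcal{A})^{-1} \bigr]^{-1}
\end{equation*}
converges by a Neumann series whenever $|\sigma| \cdot \|(i\tau - \mathcal{A})^{-1}\| < 1$. Combined with the hypothesis $\|(i\tau - \mathcal{A})^{-1}\| \leq M |\tau|^{-\varsigma}$, this shows that for some constants $c, M_1 > 0$ the region
\begin{equation*}
\Sigma = \{ \lambda \in \mathbb{C} : \operatorname{Re} \lambda \geq -c |\operatorname{Im} \lambda|^\varsigma \text{ and } |\operatorname{Im} \lambda| \geq R_0 \} \cup \{\operatorname{Re} \lambda \geq 0\}
\end{equation*}
lies in $\rho(\mathcal{A})$ and satisfies $\|(\lambda - \mathcal{A})^{-1}\| \leq M_1 |\operatorname{Im} \lambda|^{-\varsigma}$ on $\Sigma$.

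Second, since $S(t)$ is a bounded $C_0$-semigroup I would represent it by the Dunford integral on a contour $\Gamma$ lying in $\Sigma$ and running along the boundary of the parabolic region $\operatorname{Re}\lambda = -c|\operatorname{Im}\lambda|^\varsigma$ (closed off near the origin). Then for $n \geq 1$,
\begin{equation*}
\mathcal{A}^n S(t) = \frac{1}{2\pi i} \int_\Gamma \lambda^n e^{\lambda t} (\lambda - \mathcal{A})^{-1} \, d\lambda.
\end{equation*}
Parametrizing $\Gamma$ by $\tau \in \mathbb{R}$ with $\lambda(\tau) = -c|\tau|^\varsigma + i\tau$, the modulus estimate reduces to bounding
\begin{equation*}
\|\mathcal{A}^n S(t)\| \leq C \int_{|\tau|\geq R_0} |\tau|^n e^{-ct|\tau|^\varsigma} |\tau|^{-\varsigma} \, d\tau + (\text{bounded part near the origin}).
\end{equation*}
A change of variables $s = ct|\tau|^\varsigma$ turns the integral into a Gamma-function expression giving, after standard manipulation, $\|\mathcal{A}^n S(t)\| \leq C K^n \Gamma\bigl((n+1)/\varsigma\bigr) t^{-n/\varsigma - 1}$, which by Stirling is bounded by $C' K_1^n (n!)^{1/\varsigma} t^{-n/\varsigma}$, uniformly in $n$ and in $t$ on compact subsets of $(0,\infty)$.

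Third, this bound is exactly the defining characterization of Gevrey class $1/\varsigma$ for the semigroup: $S(t)$ is infinitely differentiable for $t>0$ and for every compact $K \subset (0,\infty)$ and every $\theta > 0$ one finds $C_\theta$ with $\|S^{(n)}(t)\| = \|\mathcal{A}^n S(t)\| \leq C_\theta \theta^n (n!)^{1/\varsigma}$, absorbing the factor $t^{-n/\varsigma}$ into the constant on $K$.

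The main technical obstacle is step two: one must justify the contour deformation from the original Bromwich line into the parabolic contour. This requires showing that the integrals along the connecting vertical segments vanish as the cutoff goes to infinity, which in turn uses the decay $\|(\lambda-\mathcal{A})^{-1}\| = O(|\operatorname{Im}\lambda|^{-\varsigma})$ on $\Sigma$ together with the contractivity of $S(t)$ to control the behavior for $\operatorname{Re}\lambda \geq 0$. Everything else is a careful bookkeeping of the Gamma function estimate. Since the hypothesis of the theorem is already stated in the form used by Taylor and by Liu--Zheng, I would simply invoke their formulation as done in \cite{liub}, citing the reference as the authors do for the analyticity characterization above.
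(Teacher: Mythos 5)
Your proposal is correct in outline and is, in substance, the standard Taylor/Liu--Zheng argument (extend the resolvent estimate to a region $\operatorname{Re}\lambda\geq -c|\operatorname{Im}\lambda|^{\varsigma}$ by a Neumann series, represent $\mathcal{A}^{n}S(t)$ by a Dunford integral over the parabolic contour, and obtain $\|\mathcal{A}^{n}S(t)\|\leq CK^{n}(n!)^{1/\varsigma}t^{-n/\varsigma}$ via the Gamma-function estimate). The paper itself gives no proof of this theorem --- it is quoted as a known characterization, exactly as Theorem \ref{TeoLiu} is handled by citing \cite{liub} --- so your concluding remark that one may simply invoke the reference (Taylor \cite{T}, Liu--Zheng \cite{liub}) matches the authors' treatment; the only caveat worth recording is that the sharp form of the cited result yields Gevrey class $\delta$ for every $\delta>1/\varsigma$, which is what the loose phrase ``of Gevrey class $1/\varsigma$'' is understood to mean here.
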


	We write the resolvent equation, $i\lambda  U-\mathcal{\mathfrak{B}}U=F$ as its components as follows
	\begin{align}
i\lambda \mathbf{u}+\mu \mathbf{u}-	\mathbf{w}=	f_1,\label{f11}\\
i\lambda  \mathbf{v}+\mu \mathbf{v}-	\mathbf{z}=	f_2,\label{f22}\\
i\lambda  \mathbf{w}+\mu \mathbf{w}+\Delta(\kappa_1\Delta \mathbf{u} +\beta(a_0\Theta_0+a_1\Theta_1+...+a_n\Theta_n))=	f_3, \label{f33}\\
i\lambda  \mathbf{z}+\mu \mathbf{z}+\kappa_2 \Delta^2 \mathbf{v}=f_4,\label{f44}\\
	i\lambda \Theta_k+\mu \Theta_k-\Theta_{k+1}=g_k,\label{gnn}\\
i\lambda   \Theta_n+\mu \Theta_n-	\dfrac{\beta}{ a_n}\Delta\mathbf{w}+\dfrac{1}{a_n} \sum_{j=0}^{n}b_j\Delta\Theta_j-\dfrac{1}{a_n}\sum_{j=0}^{n-1}a_j\Theta_{j}=g_{n+1}. \label{gn}
		\end{align}
where  $k=0\;,\cdots \; n-1$. 

\begin{remark}
Let us denote by 
$$
\Phi=\kappa_1\Delta \mathbf{u} +\beta\vartheta,\quad \Psi=\kappa_2\Delta \mathbf{v} ,
$$
then from \eqref{f33} and \eqref{gn}  we have that
\begin{eqnarray*}
\Delta\Phi&=&	f_3-i\lambda  \mathbf{w}-\mu \mathbf{w}\quad \in L^2(\Omega_1),\\
\Delta\Psi&=&	f_4-i\lambda  \mathbf{z}-\mu \mathbf{z}\quad  \in L^2(\Omega_2).
\end{eqnarray*}
For $U=(\mathbf{u},\mathbf{v},\mathbf{w},\mathbf{z},,\Theta_0,\Theta_1,...,\Theta_n)\in D(\mathcal{A})$ we have that $\Phi$ 
satisfies  the boundary condition 
$$
\Phi=0,\quad \mbox{on }\quad \Gamma_0,
$$
and also the transmission conditions 
$$
\Phi=\Psi,\quad \frac{\partial\Phi}{\partial\nu}=\frac{\partial\Psi}{\partial\nu},\quad \mbox{on }\quad \Gamma_1.
$$
By the elliptic regularity to second order transmission problems we have that $\Phi,\Psi\in H^2$ and we have that 
$$
\|\Phi\|_{H^2}\leq c\|\Delta\Phi\|_{L^2},\quad \|\Psi\|_{H^2}\leq c\|\Delta\Psi\|_{L^2}.
$$
\hfill $\Box$
\end{remark}

		\begin{lemma}\label{TrazoInter}
For any function $V\in H^{1}(\Omega) $ we have 
\begin{equation}\label{TrazH1}
\|V\|_{L^2(\partial \Omega)} \leq c \|V\|_{L^{2}(\Omega)}^{1/2} \|\nabla V\|_{L^{2}(\Omega)} ^{1/2}.
\end{equation}
Moreover if $V\in H^{2}(\Omega) $ is a radial function  we have 
\begin{equation}\label{TrazH2}
\|V\|_{L^2(\partial \Omega)} \leq c \|V\|_{L^{2}(\Omega)}^{3/4} \|\Delta V\|_{L^{2}(\Omega)} ^{1/4},
\end{equation}
\begin{equation}\label{TrazH3}
\|\nabla V\|_{L^2(\Omega)} \leq c \|V\|_{L^{2}(\Omega)}^{1/2} \|\Delta V\|_{L^{2}(\Omega)} ^{1/2}.
\end{equation}

				\end{lemma}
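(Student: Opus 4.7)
My plan is to establish the three estimates via integration by parts combined with Cauchy--Schwarz, with the second and third requiring the radial hypothesis in an essential way.

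For \eqref{TrazH1} I would first produce a $C^1$ vector field $q\colon\overline\Omega\to\mathbb R^N$ with $q\cdot\nu\ge 1$ on $\partial\Omega$; the smoothness of $\partial\Omega$ guarantees that such a field exists by smooth extension of the outward unit normal. Applying the divergence theorem to $V^2 q$ gives
\begin{equation*}
\int_{\partial\Omega}V^2(q\cdot\nu)\,dS=\int_\Omega V^2\,\mathrm{div}\,q\,dx+2\int_\Omega V\,q\cdot\nabla V\,dx,
\end{equation*}
and Cauchy--Schwarz bounds the right-hand side by $C\|V\|_{L^2(\Omega)}^2+C\|V\|_{L^2(\Omega)}\|\nabla V\|_{L^2(\Omega)}$. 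The pure $\|V\|_{L^2}^2$ term can then be absorbed via a Poincar\'e-type inequality compatible with the boundary conditions of the variables to which the lemma will be applied (the traces of interest all vanish on a nontrivial portion of $\Gamma_0\cup\Gamma_1$), yielding the stated multiplicative form.

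For \eqref{TrazH3} the starting point is Green's identity
\begin{equation*}
\int_\Omega|\nabla V|^2\,dx=-\int_\Omega V\,\Delta V\,dx+\int_{\partial\Omega}V\,\frac{\partial V}{\partial\nu}\,dS.
\end{equation*}
The interior integral is immediately dominated by $\|V\|_{L^2(\Omega)}\|\Delta V\|_{L^2(\Omega)}$. For the boundary term the radial hypothesis is crucial: writing $V(x)=\phi(|x|)$, both $V$ and $\partial_\nu V$ are constants on each component of $\partial\Omega$, so the trace can be estimated by Cauchy--Schwarz and then by applying \eqref{TrazH1} to $\partial_\nu V$ together with the intermediate derivative estimate of Proposition \ref{p8}; this produces a bound of the form $\varepsilon\|\nabla V\|_{L^2(\Omega)}^2+C_\varepsilon\|V\|_{L^2(\Omega)}\|\Delta V\|_{L^2(\Omega)}$, and absorbing the $\varepsilon$-term yields \eqref{TrazH3}.

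Finally, \eqref{TrazH2} follows by composing the other two inequalities:
\begin{equation*}
\|V\|_{L^2(\partial\Omega)}^2\le c\,\|V\|_{L^2(\Omega)}\|\nabla V\|_{L^2(\Omega)}\le c\,\|V\|_{L^2(\Omega)}^{3/2}\|\Delta V\|_{L^2(\Omega)}^{1/2},
\end{equation*}
and taking square roots completes the argument. The step I expect to be the main obstacle is the control of the boundary integral $\int_{\partial\Omega}V\,\partial_\nu V\,dS$ appearing in the proof of \eqref{TrazH3}: in the non-radial setting one would need a bound on $\|\partial_\nu V\|_{L^2(\partial\Omega)}$ by interior norms that is not generally available, and it is precisely the one-dimensional structure afforded by radial symmetry that reduces this boundary trace to a quantity amenable to interpolation between $\|V\|_{L^2(\Omega)}$ and $\|\Delta V\|_{L^2(\Omega)}$.
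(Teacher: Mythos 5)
Your route is genuinely different from the paper's. For \eqref{TrazH1} the paper simply invokes the trace theorem, $\|V\|_{L^2(\partial\Omega)}\le c\|V\|_{H^{1/2}(\Omega)}$, and interpolates via Proposition \ref{p8}; for \eqref{TrazH2} and \eqref{TrazH3} it uses radiality to reduce everything to one dimension: $\|\nabla V\|_{L^2(\Omega)}=\|V'\|_{L^2(\Omega)}$, a change of variables to the interval $(R_0,R)$, the one-dimensional interpolation $\|V'\|\le c\|V\|^{1/2}\|V''\|^{1/2}$, the pointwise bound $2\pi R_0|V(R_0)|\le c\|V\|^{3/4}\|V''\|^{1/4}$, and the identification $\Delta V=V''$. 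Your multiplier identity for \eqref{TrazH1} and your chaining of \eqref{TrazH1} with \eqref{TrazH3} to get \eqref{TrazH2} are fine (the latter is cleaner than the paper's direct computation), and your remark that the pure multiplicative form cannot hold on all of $H^1(\Omega)$ (constants), so that the additive $\|V\|_{L^2(\Omega)}^2$ term must be absorbed by a Poincar\'e inequality, is a real issue that the paper's own proof glosses over.

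The genuine gap is in your proof of \eqref{TrazH3}. After Green's identity you must control $\int_{\partial\Omega}V\,\partial_\nu V\,dS\le\|V\|_{L^2(\partial\Omega)}\|\partial_\nu V\|_{L^2(\partial\Omega)}$, and applying \eqref{TrazH1} to the components of $\nabla V$ yields $\|\partial_\nu V\|_{L^2(\partial\Omega)}\le c\|\nabla V\|_{L^2(\Omega)}^{1/2}\|D^2V\|_{L^2(\Omega)}^{1/2}$, with the full Hessian and not $\|\Delta V\|_{L^2(\Omega)}^{1/2}$. Proposition \ref{p8} compares $D^jV$ with $D^mV$, never with $\Delta V$, and the missing step $\|D^2V\|_{L^2(\Omega)}\le c\|\Delta V\|_{L^2(\Omega)}$ is an elliptic-regularity estimate that fails in the absence of boundary conditions (any nonconstant harmonic function; within the radial class, $V=\log|x|$ on the annulus, which also shows that \eqref{TrazH3} itself requires some normalization). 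The constancy of traces on each boundary component, which is the only use you make of radiality, does not supply this. The paper's way around the obstacle is precisely the one-dimensional reduction: once $\nabla V$ is identified with $V'$ and $\Delta V$ with $V''$ (up to the lower-order term $r^{-1}V'$, which the paper silently drops and which must be absorbed on an annulus bounded away from the origin), the needed bound is the elementary one-dimensional interpolation inequality and no elliptic regularity is invoked. You should therefore replace the Green's-identity argument for \eqref{TrazH3} by this one-dimensional reduction, or restrict to a class of functions for which $\|D^2V\|_{L^2(\Omega)}\le c\|\Delta V\|_{L^2(\Omega)}$ is available; note that \eqref{TrazH2} in your scheme is derived from \eqref{TrazH3} and so inherits the same gap.
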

				\begin{proof} Using the trace Theorem and Theorem \ref{p8}
				$$
\|V\|_{L^2(\partial \Omega)} \leq c\|V\|_{H^{1/2}(\Omega)} \leq c \|V\|_{L^{2}(\Omega)}^{1/2} \|\nabla V\|_{L^{2}(\Omega)} ^{1/2}.
$$
Hence inequality \eqref{TrazH1} follows. 
If $V$ is a radial function we have that 
$$
\|V\|_{L^2(\partial \Omega)} =2\pi R_0|V(R_0)|\leq c\|V\|_{L^2(R_0,R)}\|V'\|_{L^2(R_0,R)}\leq c\|V\|_{L^2(R_0,R)}^{3/4}\|V''\|_{L^2(R_0,R)}^{1/4}.
$$
Since for any radial function we have that $\Delta V=V''$, hence we arrive to 
$$
\|V''\|_{L^2(R_0,R)}\leq c\|\Delta V\|_{L^2(\Omega)}.
$$
From the two above inequalities we get \eqref{TrazH2}. Finally, since 
$V$ is a radial function we have that 
$$
\frac{\partial V}{\partial x}=V'(r)\frac{x}{r},\quad \frac{\partial V}{\partial y}=V'(r)\frac{y}{r}.
$$
Hence we have that 
$$
\|\nabla V\|_{L^2(\Omega)} = \| V'\|_{L^2(\Omega)}. 
$$
Using a change of variable we get 
$$
 \| V'\|_{L^2(\Omega)}\leq 2\pi  \| V'\|_{L^2(R_0,R)}\leq c \| V\|_{L^2(R_0,R)}^{1/2} \| V''\|_{L^2(R_0,R)}^{1/2}.
 $$
Using the identity  $\Delta V=V''$ inequality \eqref{TrazH3} follows. 
				\end{proof}

\bigskip

\noindent  
To facilitate our notations let us introduce the  following functional,
	\begin{align*}
\mathfrak{E}^2=\int_{\Omega_1}|\mathbf{w}|^2+\kappa_1 |\Delta \mathbf{u}|^2+ \left|\Theta_n\right|^2 d\Omega,\quad \mathfrak{R}^2=\left\|U \right\|_\mathcal{H}\left\|F \right\|_\mathcal{H}+\left\|F \right\|^{2}_\mathcal{H}, \quad 
\mathfrak{R}_0^2= \left\|U \right\|_\mathcal{H}\left\|F \right\|_\mathcal{H}.
	\end{align*}

\begin{lemma} \label{uthetapp}
Over the thermoelastic part, for any $ \epsilon>0$ there exists a positive constant $c$ and $c_\epsilon$ such that 
\begin{eqnarray}
\int_{\Omega_1} |\Theta_n|^2d\Omega&\leq &\frac{c}{|\lambda|^{1/2}}\|\mathbf{w}\|^{1/2}\|\Delta\mathbf{u}\|^{1/2}\mathfrak{R}_0 +\frac{c}{|\lambda|}\mathfrak{R}^2,\label{thetp}\\
\int_{\Omega_1} |\Theta_n|^2d\Omega&\leq &\epsilon \|\mathbf{w}\| \|\Delta\mathbf{u}\| +\frac{c_\epsilon}{|\lambda|}\mathfrak{R}^2.\label{thetp2} 
\end{eqnarray}
\end{lemma}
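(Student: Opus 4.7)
I test the last resolvent equation \eqref{gn} against $\overline{\Theta_n}$ in $L^2(\Omega_1)$. Since $\Theta_n\in H_0^1(\Omega_1)$ (it vanishes on $\Gamma_0\cup\Gamma_1$), integration by parts eliminates every boundary contribution and produces
\begin{align*}
(i\lambda+\mu)\|\Theta_n\|^2 &= -\frac{\beta}{a_n}\int_{\Omega_1}\nabla\mathbf{w}\cdot\nabla\overline{\Theta_n}\,d\Omega+\frac{1}{a_n}\sum_{j=0}^{n}b_j\int_{\Omega_1}\nabla\Theta_j\cdot\nabla\overline{\Theta_n}\,d\Omega\\
&\quad+\frac{1}{a_n}\sum_{j=0}^{n-1}a_j\int_{\Omega_1}\Theta_j\overline{\Theta_n}\,d\Omega+\int_{\Omega_1}g_{n+1}\overline{\Theta_n}\,d\Omega.
\end{align*}
Since $|i\lambda+\mu|\ge|\lambda|$, taking moduli gives $|\lambda|\|\Theta_n\|^2\le|\mathrm{RHS}|$, so everything reduces to bounding the right-hand side by $c|\lambda|^{1/2}\|\mathbf{w}\|^{1/2}\|\Delta\mathbf{u}\|^{1/2}\mathfrak{R}_0+c\mathfrak{R}^2$.

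\textbf{Principal term.} The dissipation relation \eqref{nondis1} applied to $\mathfrak{B}U=i\lambda U-F$ supplies the basic bound $\|\nabla\Theta_n\|^2\le c\mathfrak{R}_0^2$. Proposition \ref{p8} gives $\|\nabla\mathbf{w}\|\le c\|\mathbf{w}\|^{1/2}\|\Delta\mathbf{w}\|^{1/2}$, while \eqref{f11} rewritten as $\Delta\mathbf{w}=(i\lambda+\mu)\Delta\mathbf{u}-\Delta f_1$ yields $\|\Delta\mathbf{w}\|\le c|\lambda|\|\Delta\mathbf{u}\|+c\|F\|_{\mathcal{H}}$. Combining with Cauchy--Schwarz produces
\begin{equation*}
\Bigl|\textstyle\int_{\Omega_1}\nabla\mathbf{w}\cdot\nabla\overline{\Theta_n}\,d\Omega\Bigr|\le c|\lambda|^{1/2}\|\mathbf{w}\|^{1/2}\|\Delta\mathbf{u}\|^{1/2}\mathfrak{R}_0+c\|\mathbf{w}\|^{1/2}\|F\|_{\mathcal{H}}^{1/2}\mathfrak{R}_0,
\end{equation*}
and the last summand is absorbed into $\mathfrak{R}^2$ because $\|\mathbf{w}\|^{1/2}\|F\|_{\mathcal{H}}^{1/2}\mathfrak{R}_0\le\|U\|_{\mathcal{H}}\|F\|_{\mathcal{H}}=\mathfrak{R}_0^2$.

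\textbf{Lower-order terms and deduction of \eqref{thetp}.} The hierarchy \eqref{gnn} inverted reads $\Theta_k=(i\lambda+\mu)^{-1}(\Theta_{k+1}+g_k)$; iterating from $k=n-1$ downward gives $\|\nabla\Theta_k\|\le c|\lambda|^{-1}(\|\nabla\Theta_n\|+\|F\|_{\mathcal{H}})$ for every $k\le n-1$ and $|\lambda|\ge 1$. With Cauchy--Schwarz and Poincaré's inequality $\|\Theta_j\|\le c\|\nabla\Theta_j\|$, each remaining sum is then $\le c\mathfrak{R}^2$ (the $j=n$ summand of the first sum is directly $b_n\|\nabla\Theta_n\|^2\le c\mathfrak{R}_0^2$). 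The source term is handled by $|\int g_{n+1}\overline{\Theta_n}|\le\|F\|_{\mathcal{H}}\|\Theta_n\|\le c\mathfrak{R}^2$. Dividing by $|\lambda|$ yields \eqref{thetp}. Estimate \eqref{thetp2} follows at once from \eqref{thetp} via Young's inequality $ab\le\epsilon a^2+c_\epsilon b^2$ with $a=\|\mathbf{w}\|^{1/2}\|\Delta\mathbf{u}\|^{1/2}$ and $b=|\lambda|^{-1/2}\mathfrak{R}_0$, absorbing the resulting $|\lambda|^{-1}\mathfrak{R}_0^2$ into $|\lambda|^{-1}\mathfrak{R}^2$.

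\textbf{Main obstacle.} The delicate point is the term $\int\nabla\mathbf{w}\cdot\nabla\overline{\Theta_n}$: the $\mathcal{H}$-norm controls only $\|\mathbf{w}\|_{L^2}$, so $\nabla\mathbf{w}$ cannot be bounded directly and must be routed through the displacement $\mathbf{u}$ via \eqref{f11}. Proposition \ref{p8} then converts the factor $|\lambda|$ coming from $\|\Delta\mathbf{w}\|$ into the weight $|\lambda|^{1/2}$, which is precisely the square-root behaviour that will eventually drive the Gevrey class $4$ index.
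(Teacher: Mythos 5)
Your proof is correct and follows essentially the same route as the paper: you test \eqref{gn} against (a scalar multiple of) $\overline{\Theta_n}$, integrate the $\Delta\mathbf{w}$ term by parts, bound $\|\nabla\Theta_n\|$ by $\mathfrak{R}_0$ via the dissipation inequality, and convert $\|\nabla\mathbf{w}\|$ into $c|\lambda|^{1/2}\|\mathbf{w}\|^{1/2}\|\Delta\mathbf{u}\|^{1/2}$ through interpolation and \eqref{f11} — exactly the paper's argument, except that the paper uses the multiplier $\overline{i\lambda\Theta_n}$ and divides by $|\lambda|^{2}$ where you divide by $|\lambda|$. Your explicit derivation of \eqref{thetp2} from \eqref{thetp} via Young's inequality supplies a step the paper leaves implicit, and is valid.
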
	 
\begin{proof}
Multiplying equation \eqref{gn} by $\overline{i\lambda \Theta_n}$  we get

\begin{align}
\int_{\Omega_1} |\lambda  \Theta_n|^2d\Omega&=\underbrace{\dfrac{\beta}{ a_n}\int_{\Omega_1}\Delta \mathbf{w} \overline{i\lambda \Theta_n} \ d\Omega}_{:=J_2}+\underbrace{ \dfrac{1}{a_n}\int_{\Omega_1}\sum_{j=0}^{n}b_j\nabla\Theta_j\overline{i\lambda \nabla\Theta_n} \ d\Omega}_{=\int_{\Omega_1}\sum_{j=0}^{n-1}b_j\nabla\Theta_j\overline{\nabla\Theta_n}\ d\Omega+b_ni\lambda\|\nabla\Theta_n\|^2 }\nonumber \\
&+\dfrac{1}{a_n}\underbrace{\int_{\Omega_1}\sum_{j=1}^{n-1}a_j\Theta_{j} \overline{i\lambda \Theta_n} \ d\Omega}_{\leq c\|U\|_{\mathcal{H}}\|F\|_{\mathcal{H}}+\epsilon\|\lambda \Theta_n\|^2 }+\underbrace{\int_{\Omega_1}g_{n+1} \overline{i\lambda \Theta_n}\ d\Omega}_{\leq c\|F\|_{\mathcal{H}}^2+\epsilon\|\lambda \Theta_n\|^2}-\int_{\Omega_1}i \lambda \mu \left| \Theta_n \right|^2 \ d \Omega. \label{gxn}
\end{align}
Using the same procedure as in \eqref{estxxx} we get
\begin{eqnarray} 
\int_{\Omega_1} |\nabla \mathbf{w}|^2\;d\Omega
&\leq &c|\lambda|\|\mathbf{w}\|\|\Delta\mathbf{u}\|+  
c\|U\|_{\mathcal{H}}\|F\|_{\mathcal{H}}.\label{estvvv1}
\end{eqnarray}
Using \eqref{estvvv1} we get
\begin{eqnarray*}
|J_2| &=&\left|\dfrac{\beta}{ a_n}\int_{\Omega_1}\nabla \mathbf{w} \overline{i\lambda \nabla\Theta_n} \ d\Omega\right|\\
&\leq &c|\lambda|\left(|\lambda|^{1/2}\|\mathbf{w}\|^{1/2}\|\Delta\mathbf{u}\|^{1/2}+  c\|U\|_{\mathcal{H}}^{1/2}\|F\|_{\mathcal{H}}^{1/2}\right)\|U\|_{\mathcal{H}}^{1/2}\|F\|_{\mathcal{H}}^{1/2}\\
&\leq &c|\lambda|^{3/2}\|\mathbf{w}\|^{1/2}\|\Delta\mathbf{u}\|^{1/2}\mathfrak{R}_0 +c_\epsilon|\lambda|\mathfrak{R}_0^2.
\end{eqnarray*}
Taking the real part in identity \eqref{gxn} and using the above inequalities we get 
$$
\int_{\Omega_1} |\lambda \Theta_n|^2 \ d\Omega\leq c|\lambda|^{3/2}\|\mathbf{w}\|^{1/2}\|\Delta\mathbf{u}\|^{1/2}\mathfrak{R}_0 +c_\epsilon|\lambda|\|U\|_{\mathcal{H}}\|F\|_{\mathcal{H}} +c_\epsilon\|F\|_{\mathcal{H}}^2.
$$
 Hence relation \eqref{thetp} follows. 
\end{proof}

\bigskip 

\noindent 
We begin our procedure rewriting equation \eqref{gn} and
using  \eqref{f11}

\begin{equation}\label{eta}
\Theta_n+\dfrac{\mu \Theta_n}{i\lambda}-	\dfrac{\beta}{ a_n}\Delta \mathbf{u}-\dfrac{1}{i\lambda  a_n} \sum_{j=0}^{n}b_j\Delta\Theta_j+\dfrac{1}{a_ni\lambda  }\sum_{j=0}^{n-1}a_j\Theta_{j}=\frac{1}{i\lambda  }g_{n+1} +\dfrac{\beta}{ i\lambda  a_n}\Delta f_1.
\end{equation}
Let us denote by 
				\begin{equation}
				\label{lll}\mathcal{L}=\dfrac{1}{i\lambda a_n} \sum_{j=0}^{n}b_j\nabla\Theta_j.
\end{equation}
Note that, for a sufficiently large positive constant $c$ we see,
				\begin{equation}\label{Ll2}
\|\mathcal{L}\|_{L^{2}(\Omega_1)}\leq \frac{c}{|\lambda|}\mathfrak{R}.
				\end{equation}

		\begin{lemma}\label{etanTT}
Under the above notations we have that
$$
\int_{\Gamma _1}\left| \mathcal{L}\right| ^2 d\Gamma \leq \frac{c}{|\lambda |}\mathfrak{E}^2+\frac{c}{|\lambda |^2}\|U\|_{\mathcal{H}}\|F\|_{\mathcal{H}}, 
$$
$$
\left\|\frac{1}{|\lambda|} \nabla\Theta_{k} \right\|_{\Gamma_1} 
\leq \frac{c}{|\lambda|}\mathfrak{R}^{1/2}\mathfrak{E}^{1/2}+\frac{c}{|\lambda|^2}\mathfrak{R}\quad \mbox{for } k=1,2\cdots n.
$$

				\end{lemma}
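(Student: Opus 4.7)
Let $\Xi := \dfrac{1}{i\lambda a_n}\sum_{j=0}^{n}b_j\Theta_j$, so that $\mathcal{L}=\nabla\Xi$. The key structural observation is that every $\Theta_j$ vanishes on $\partial\Omega_1 = \Gamma_0\cup\Gamma_1$, hence so does $\Xi$; and the resolvent identity \eqref{eta} rewrites $\Delta\Xi$ as the ``hyperbolic block'' $\Theta_n-(\beta/a_n)\Delta\mathbf{u}$ plus $O(|\lambda|^{-1})$ correction terms. Both parts of the lemma will follow from the general trace inequality \eqref{TrazH1} applied to an $H^1$ vector field, together with the Dirichlet $H^2$-regularity bound $\|V\|_{H^2(\Omega_1)}\le c\|\Delta V\|_{L^2(\Omega_1)}$ valid for $V\in H^1_0\cap H^2(\Omega_1)$.

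For the first estimate, I would combine \eqref{TrazH1} (componentwise on $\mathcal{L}$) with Dirichlet elliptic regularity for $\Xi$ to obtain
\begin{equation*}
\|\mathcal{L}\|_{L^2(\Gamma_1)}^2 \le c\,\|\mathcal{L}\|_{L^2(\Omega_1)}\,\|\nabla\mathcal{L}\|_{L^2(\Omega_1)} \le c\,\|\mathcal{L}\|_{L^2(\Omega_1)}\,\|\Delta\Xi\|_{L^2(\Omega_1)}.
\end{equation*}
By \eqref{Ll2} the first factor is at most $c|\lambda|^{-1}\mathfrak{R}$. For the second factor, I solve \eqref{eta} explicitly for $\Delta\Xi$ and take the $L^2$ norm, producing $\|\Delta\Xi\|_{L^2}\le c\mathfrak{E}+c|\lambda|^{-1}(\|U\|_{\mathcal{H}}+\|F\|_{\mathcal{H}})$ (using $\|\Theta_n\|+\|\Delta\mathbf{u}\|\le c\mathfrak{E}$, Poincaré on the $\Theta_j$ for $j<n$, and $\Delta f_1\in L^2$ since $f_1\in\mathds{H}^2_{\Gamma}$). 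Multiplying and absorbing the cross term $|\lambda|^{-1}\mathfrak{R}\,\mathfrak{E}$ via Young's inequality delivers the first estimate.

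For the second estimate, I would iterate the cascade \eqref{gnn} downward, writing $\Theta_j = (i\lambda+\mu)^{-1}(\Theta_{j+1}+g_j)$ and unfolding to express $\Theta_k = (i\lambda+\mu)^{-(n-k)}\Theta_n + (\text{sources carrying extra }|\lambda|^{-1}\text{ factors})$. Since $\|\nabla\Theta_n\|\le c\mathfrak{R}_0$ comes from the dissipation \eqref{nondis1}, this yields $\|\nabla\Theta_k\|_{L^2}\le c\mathfrak{R}/|\lambda|^{n-k}$ for $k<n$. Applying \eqref{TrazH1} to $\nabla\Theta_k$ together with Dirichlet $H^2$-regularity gives $\|\nabla\Theta_k\|_{L^2(\Gamma_1)}^2 \le c\|\nabla\Theta_k\|_{L^2}\,\|\Delta\Theta_k\|_{L^2}$. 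The Laplacian factor is then extracted from the same recursion combined with \eqref{eta}, which controls the specific combination $\sum b_j\Delta\Theta_j$ in terms of $\mathfrak{E}$ and $|\lambda|^{-1}$-penalised sources. Dividing the resulting inequality by $|\lambda|$ gives the claimed form.

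The main obstacle will be the careful bookkeeping of $|\lambda|$-powers, in particular the control of $\Delta\Theta_k$ for \emph{individual} indices, since \eqref{eta} hands us only the coupled combination $\sum b_j\Delta\Theta_j$; one has to feed the recursion \eqref{gnn} in the direction that damps $|\lambda|$ rather than amplifies it, and combine it with \eqref{eta} to peel off each $\Delta\Theta_k$. A subsidiary verification is that $\Xi$ (and each $\Theta_k$) has homogeneous Dirichlet data on the \emph{whole} boundary $\partial\Omega_1=\Gamma_0\cup\Gamma_1$, which is what unlocks the $H^2$-regularity bound used throughout both parts.
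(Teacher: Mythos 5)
Your treatment of the first inequality is essentially the paper's own argument: trace interpolation $\|\mathcal{L}\|_{\Gamma_1}^2\leq c\|\mathcal{L}\|\,\|\operatorname{div}\mathcal{L}\|$, the bound $\|\mathcal{L}\|\leq c|\lambda|^{-1}\mathfrak{R}$ from \eqref{Ll2}, and reading $\Delta\Xi=\operatorname{div}\mathcal{L}$ off equation \eqref{eta} as $\Theta_n-(\beta/a_n)\Delta\mathbf{u}$ plus $O(|\lambda|^{-1})$ sources (the paper additionally inserts \eqref{thetp2} to replace $\|\Theta_n\|$ by $\|\mathbf{w}\|$, but that is cosmetic here). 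That part is fine.

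The second inequality is where your route breaks down, and the obstacle you flag at the end is not mere bookkeeping but a genuine gap. Your plan requires the interpolation $\|\nabla\Theta_k\|_{L^2(\Gamma_1)}^2\leq c\|\nabla\Theta_k\|\,\|\Delta\Theta_k\|$ for each \emph{individual} $k$, hence $L^2$ control of each $\Delta\Theta_k$. But the domain of the operator only guarantees $\sum_{j=0}^{n}b_j\Theta_j\in H^2(\Omega_1)$, and equation \eqref{eta} only controls the coupled combination $\sum_j b_j\Delta\Theta_j$. The cascade \eqref{gnn} cannot decouple this: unfolding $\Theta_k=(i\lambda+\mu)^{-1}(\Theta_{k+1}+g_k)$ introduces the sources $g_j$, which live only in $H^1_0(\Omega_1)$, so the individual $\Theta_k$ (including $\Theta_n$) need not belong to $H^2(\Omega_1)$ at all and $\|\Delta\Theta_k\|_{L^2}$ may simply fail to be finite. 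There is no way to "peel off each $\Delta\Theta_k$" from the sum. The paper sidesteps this entirely by never leaving the trace level: it inverts the cascade at the level of gradients, writing
$\bigl(\tfrac{b_n}{i\lambda a_n}+\tfrac{b_{n-1}}{i\lambda a_n(i\lambda+\mu)}+\cdots\bigr)\nabla\Theta_n=\mathcal{L}-(\text{terms carrying extra }|\lambda|^{-1}\text{ factors and }\nabla g_j\text{ sources})$,
so that for $\lambda$ large $\frac{1}{|\lambda|}\nabla\Theta_n$ (and then each $\frac{1}{|\lambda|}\nabla\Theta_k$ via one more application of the cascade) equals a bounded multiple of $\mathcal{L}$ plus small remainders; restricting this \emph{identity} to $\Gamma_1$ and invoking the already-proved bound on $\|\mathcal{L}\|_{\Gamma_1}$ gives the second estimate with no Laplacian of any individual $\Theta_k$ ever appearing. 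You should replace your steps (b)--(c) by this algebraic inversion.
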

				\begin{proof} 
From equation \eqref{eta}  we get
\begin{eqnarray*}
\|div\ \mathcal{L}\|_{L^2(\Omega)}\leq c\|\Theta_n\|+c\|\Delta\mathbf{u}\|+\frac{c}{|\lambda |}\mathfrak{R}.
\end{eqnarray*}
Using  inequality \eqref{thetp2}  of Lemma \ref{uthetapp} into the above relation  we get 
\begin{eqnarray}
\|div\ \mathcal{L}\|_{L^2(\Omega)}
&\leq & c\|\mathbf{w}\|+c\|\Delta\mathbf{u}\|+\frac{c}{|\lambda|^{1/2}}\mathfrak{R} \label {i21}.
\end{eqnarray}
By Lemma \ref{TrazoInter} and relations \eqref {i21}, \eqref{Ll2}
 we get 
\begin{eqnarray}\label {i3}
\|\mathcal{L}\|_{\Gamma_1}^2
&\leq&c\|\mathcal{L}\|_{L^2(\Omega)}\|\mbox{div }\mathcal{L}\|_{L^2(\Omega)}\nonumber\\
&\leq& \frac{c}{|\lambda |}\mathfrak{R}_0\mathfrak{E}+\frac{c}{|\lambda|^2}\|F\|_{\mathcal{H}}^2.
\end{eqnarray}
On the other hand, recalling the definition of $\mathcal{L} $ of \eqref{lll} and using \eqref{gnn} we get
$$
	\Theta_k=\frac{1}{i\lambda +\mu}\left( \Theta_{k+1}+g_k\right) ,
	$$
	where
\begin{eqnarray*}
\frac{b_n}{i \lambda a_n} \nabla\Theta_{n} 
&=&\mathcal{L}-\dfrac{1}{i\lambda a_n} \sum_{j=0}^{n-2}b_j\nabla\Theta_j-\dfrac{b_{n-1}}{i\lambda a_n} \nabla\Theta_{n-1}\\
&=&\mathcal{L}-\dfrac{1}{i\lambda a_n} \sum_{j=0}^{n-2}b_j\nabla\Theta_j-\dfrac{b_{n-1}}{i\lambda a_n(i\lambda +\mu)} \left( \nabla\Theta_{n}+\nabla g_{n-1}\right) .
\end{eqnarray*}
From where we get 
\begin{eqnarray*}
\left( \frac{b_n}{i \lambda a_n} +\dfrac{b_{n-1}}{i\lambda a_n(i\lambda +\mu)} \right) \nabla\Theta_{n}
&=&\mathcal{L}-\dfrac{1}{i\lambda a_n} \sum_{j=0}^{n-2}b_j\nabla\Theta_j-\dfrac{b_{n-1}}{i\lambda a_n\left( i\lambda +\mu\right) } \nabla g_{n-1}.
\end{eqnarray*}
Repeating the above procedure and taking $\lambda$ large we and \eqref{i3} conclude that there exists a positive constant $c$ such that 
\begin{eqnarray*}
\left\|\frac{1}{|\lambda|} \nabla\Theta_{n} \right\|_{\Gamma_1} 
\leq c\left\| \mathcal{L}\right\|_{\Gamma_1} +\frac{c}{|\lambda|^2}\|F\|_{\mathcal{H}}\leq  \frac{c}{|\lambda|^{1/2}}\mathfrak{R}_0^{1/2}\mathfrak{E}^{1/2}+\frac{c}{|\lambda|^2}\mathfrak{R}.
\end{eqnarray*}
From where our conclusion follows. 
 \end{proof}

\begin{lemma}\label{Nnll} Let us denote by $D$ any first order operator, then  we have 
 \begin{eqnarray}
\|\Theta_n\|_{H^2(\Omega_1)} 
&\leq &c\|\mbox{div }\mathcal{L}\|,\nonumber\\
\int_{\Omega_1}\left|\nabla(\kappa_1\Delta  \mathbf{u}+\beta \vartheta)\right|^2 d \Omega
&\leq &c|\lambda|\|\mathbf{w}\|\mathfrak{E}+c\|f_3\|\mathfrak{E},\nonumber
\end{eqnarray}
and over the boundary we have 
 \begin{align}
\left\|\nabla(\kappa_1\Delta  \mathbf{u} +\beta\vartheta)\right\|_{L^2(\partial\Omega_1)}
&\leq c|\lambda|^{3/4}\|\mathbf{w}\|^{3/4}\mathfrak{E}^{1/4}+c\mathfrak{E}^{1/4}\|f_3\|_{L^{2}}^{3/4},\label{tercera}\\
\left\|D^2\mathbf{u}\right\|_{L^2(\partial\Omega_1)}
&\leq c|\lambda|^{1/4}\mathfrak{E}^{3/4}\|\mathbf{w}\|^{1/4}+c\mathfrak{E}^{3/4}\|f_3\|^{1/4}\label{segunda},\\
\left\|D\mathbf{w}\right\|_{L^2(\partial\Omega_1)}
&\leq c|\lambda|^{3/4}\left\| \mathbf{w}\right\|_{L^{2}(\Omega_1)}^{1/4}\left\| \Delta \mathbf{u}\right\|_{L^{2}(\Omega_1)}^{3/4} \nonumber\\ &+ \frac {c}{|\lambda|}\left\| \mathbf{w}\right\|_{L^{2}(\Omega_1)}^{1/4}\left\| \Delta f_1\right\|_{L^{2}(\Omega_1)}^{3/4}\label{primera} ,\\
\left\|\mathbf{w}\right\|_{L^2(\partial\Omega_1)}
&\leq  c|\lambda|^{1/4}\left\|\mathbf{w}\right\|_{L^2(\Omega_1)}^{3/4}\left\|\Delta \mathbf{u}\right\|_{L^{2}(\Omega_1)}^{1/4}+c\left\|\mathbf{w}\right\|_{L^2(\Omega_1)}^{3/4}\left\|\Delta f_1\right\|_{L^{2}(\Omega_1)}^{1/4}\label{cero} .
\end{align}

\end{lemma}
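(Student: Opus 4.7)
The plan is to prove the six inequalities in the stated order, each built from the resolvent system \eqref{f11}--\eqref{gn}, elliptic regularity, and the trace inequality \eqref{TrazH1} of Lemma \ref{TrazoInter}. Throughout I set $\Phi := \kappa_1\Delta\mathbf{u}+\beta\vartheta$, which by \eqref{f33} satisfies $\Delta\Phi=f_3-(i\lambda+\mu)\mathbf{w}$, hence $\|\Delta\Phi\|_{L^2(\Omega_1)}\le c|\lambda|\|\mathbf{w}\|+c\|f_3\|$, and the elliptic transmission regularity from the Remark gives $\|\Phi\|_{H^2}\le c\|\Delta\Phi\|_{L^2}$.

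For the first inequality I combine \eqref{gn} with the recurrence \eqref{gnn}, which implies $\Theta_k=(i\lambda+\mu)^{-(n-k)}\Theta_n+$ lower-order $g$-terms, so $i\lambda a_n\,\text{div}\,\mathcal{L}=\sum b_j\Delta\Theta_j=(b_n+O(|\lambda|^{-1}))\Delta\Theta_n+\text{data}$. Since $\Theta_n\in H^1_0(\Omega_1)$, for $|\lambda|$ large the remainder is absorbed and elliptic $H^2$-regularity yields $\|\Theta_n\|_{H^2}\le c\|\text{div}\,\mathcal{L}\|$. For the interior bound on $\nabla\Phi$, I integrate by parts: $\int_{\Omega_1}|\nabla\Phi|^2=-\mathrm{Re}\int_{\Omega_1}\Phi\,\overline{\Delta\Phi}+\text{bdry}$. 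The boundary contribution on $\Gamma_0$ vanishes (since $\Delta\mathbf{u}=0$ and $\vartheta=0$ there) and on $\Gamma_1$ is handled by the transmission identity $\Phi=\Psi$ plus Lemma~\ref{TrazoInter}. Using $\|\Phi\|_{L^2}\le c\mathfrak{E}$ (from $\|\Delta\mathbf{u}\|\le c\mathfrak{E}$ and $\|\vartheta\|\le c\|\Theta_n\|+\text{data}\le c\mathfrak{E}$) and Cauchy--Schwarz delivers $c|\lambda|\|\mathbf{w}\|\mathfrak{E}+c\|f_3\|\mathfrak{E}$.

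The four boundary estimates all come from the trace/interpolation inequality $\|V\|_{L^2(\partial\Omega_1)}\le c\|V\|_{L^2}^{1/2}\|\nabla V\|_{L^2}^{1/2}$ applied to $V=\nabla\Phi,\,D^2\mathbf{u},\,D\mathbf{w},\,\mathbf{w}$, respectively. For \eqref{tercera}, I interpolate using the interior $\|\nabla\Phi\|^2\le c|\lambda|\|\mathbf{w}\|\mathfrak{E}+c\|f_3\|\mathfrak{E}$ just proved and $\|\nabla^2\Phi\|\le c\|\Delta\Phi\|\le c|\lambda|\|\mathbf{w}\|+c\|f_3\|$. For \eqref{segunda}, elliptic regularity for the biharmonic $\mathbf{u}$ (through $\Phi$) gives $\|D^2\mathbf{u}\|\le c\mathfrak{E}$ and $\|D^3\mathbf{u}\|\le c\|\nabla\Phi\|+$ lower-order, and combining with the trace inequality produces the $|\lambda|^{1/4}$ power. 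For \eqref{primera} and \eqref{cero}, I replace $\mathbf{w}$ by $(i\lambda+\mu)\mathbf{u}-f_1$ via \eqref{f11}, so trace interpolation on $\mathbf{w}$ and $D\mathbf{w}$ reduces to interpolation on $\mathbf{u}$ and $D\mathbf{u}$ weighted by $|\lambda|^{1/4}$ and $|\lambda|^{3/4}$ respectively, with $\|\mathbf{w}\|\sim\|\mathbf{w}\|$ and $\|\nabla\mathbf{w}\|\sim|\lambda|\|\nabla\mathbf{u}\|\lesssim|\lambda|\|\mathbf{u}\|^{1/2}\|\Delta\mathbf{u}\|^{1/2}$ by \eqref{TrazH3}.

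The delicate point is the bookkeeping of $|\lambda|$-powers so that the exponents $\tfrac14$ and $\tfrac34$ appear as stated. In particular, the estimates on $D\mathbf{w}$ and $D^2\mathbf{u}$ on the boundary will have cross-terms coming from $\Gamma_1$ that must be reabsorbed using the radial-symmetry trace inequalities \eqref{TrazH2}--\eqref{TrazH3}, and ensuring that no worse than $|\lambda|^{3/4}$ leaks out through the transmission identities \eqref{trans1}--\eqref{trans2} is where I expect the main obstacle to lie.
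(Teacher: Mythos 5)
Your overall strategy --- substitute $\Delta\Phi=f_3-(i\lambda+\mu)\mathbf{w}$ and $\Delta\mathbf{w}=(i\lambda+\mu)\Delta\mathbf{u}-\Delta f_1$ from the resolvent equations and then interpolate to get the boundary traces with the exponents $\tfrac14,\tfrac34$ --- is the same as the paper's, and your derivations of \eqref{tercera}--\eqref{cero} reproduce the paper's computations essentially verbatim once the interior bound on $\nabla\Phi$ is available. The difference, and the gap, is precisely in how you obtain that interior bound. You propose to integrate by parts, writing $\int_{\Omega_1}|\nabla\Phi|^2=-\mathrm{Re}\int_{\Omega_1}\Phi\,\overline{\Delta\Phi}+\text{bdry}$. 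The contribution on $\Gamma_0$ does vanish, but the contribution on $\Gamma_1$ is $\mathrm{Re}\int_{\Gamma_1}\Phi\,\overline{\tfrac{\partial\Phi}{\partial\nu}}\,d\Gamma$, which via the transmission conditions equals $\mathrm{Re}\int_{\Gamma_1}\Psi\,\overline{\tfrac{\partial\Psi}{\partial\nu}}\,d\Gamma$ and so couples to third derivatives of $\mathbf{v}$ on the interface. This term is of leading order; controlling it requires exactly the kind of boundary trace estimates that this lemma (and Lemma \ref{elas}) are meant to supply, so ``handled by the transmission identity plus Lemma \ref{TrazoInter}'' is circular as stated. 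The paper avoids the issue entirely: it never integrates by parts, but instead invokes the radial interpolation inequality \eqref{TrazH3} of Lemma \ref{TrazoInter}, $\|\nabla V\|_{L^2(\Omega_1)}\leq c\|V\|_{L^2(\Omega_1)}^{1/2}\|\Delta V\|_{L^2(\Omega_1)}^{1/2}$, applied to $V=\Phi$ (and to $V=\mathbf{w}$ for \eqref{primera}). That inequality is proved by reducing a radial function to a one-dimensional function on $(R_0,R)$ and using the intermediate-derivative theorem there, so no interface terms ever appear. You should replace your integration-by-parts step with a direct appeal to \eqref{TrazH3}; everything downstream in your argument then goes through.

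Two smaller remarks. First, in your route to \eqref{segunda} you need $\|D^3\mathbf{u}\|\leq c\|\nabla\Phi\|+\text{lower order}$; since $\nabla\Phi=\kappa_1\nabla\Delta\mathbf{u}+\beta\nabla\vartheta$, the ``lower order'' remainder is $\|\nabla\vartheta\|$, which is of the size of $\|U\|_{\mathcal H}$ and is not absorbed by the right-hand side $c\mathfrak{E}^{3/4}(|\lambda|\|\mathbf{w}\|+\|f_3\|)^{1/4}$ as written; the paper instead passes through $\|D^2\mathbf{u}\|_{L^2(\partial\Omega_1)}\leq c\|\Phi\|_{L^2(\partial\Omega_1)}$ and applies \eqref{TrazH2} to $\Phi$ itself, which sidesteps this. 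Second, the paper's proof does not actually establish the first displayed inequality $\|\Theta_n\|_{H^2(\Omega_1)}\leq c\|\mbox{div }\mathcal{L}\|$; your sketch via the recurrence \eqref{gnn} and elliptic regularity is a reasonable way to supply it.
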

\begin{proof} 
Using Lemma \ref{TrazoInter} and the symmetry of $w$ we get
\begin{eqnarray*}
\int_{\Omega_1} |\nabla\mathbf{w}|^2\;d\Omega
&\leq &c\| \mathbf{w}\|\| \Delta\mathbf{w}\|\nonumber\\
&\leq &c|\lambda|\| \mathbf{w}\|\| \Delta\mathbf{u}\|+c\| \mathbf{w}\|\| \Delta f_1\|.
\label{nabla}
\end{eqnarray*}
Similarly  and using \eqref{f33} we get
 \begin{eqnarray}
\int_{\Omega_1}\left|{\nabla(\kappa_1\Delta  \mathbf{u}}+\beta\vartheta)\right|^2 d \Omega
&\leq &c\left\|{\kappa_1\Delta  \mathbf{u}}+\beta\vartheta\right\|\left\|{\Delta(\kappa_1\Delta  \mathbf{u}}+\beta\vartheta)\right\|\nonumber\\
&\leq &c\left\|{\kappa_1\Delta  \mathbf{u}}+\beta\vartheta\right\|\left\|i\lambda \mathbf{w}-f_3\right\|\nonumber\\
&\leq &c|\lambda|\|\mathbf{w}\|\|\kappa_1\Delta  \mathbf{u}-\beta\vartheta\|+c\|f_3\|\|\kappa_1\Delta  \mathbf{u}-\beta\vartheta\|.\nonumber
\end{eqnarray}
Using Lemma \ref{TrazoInter} for $V=\kappa_1\Delta  \mathbf{u}+\beta\vartheta$ and the symmetry of $V$   we get 
 \begin{eqnarray}
\left\|\nabla(\kappa_1\Delta  \mathbf{u}+\beta\vartheta)\right\|_{L^2(\partial\Omega_1)} 
&\leq &c\|\kappa_1\Delta \mathbf{u} +\beta\vartheta\|_{L^{2}}^{1/4}\|\Delta(\kappa_1\Delta \mathbf{u} +\beta\vartheta)\|_{L^{2}}^{3/4}\nonumber\\
&\leq &c\|\kappa_1\Delta \mathbf{u} +\beta\vartheta\|_{L^{2}}^{1/4}\|\lambda\mathbf{w} -f_3\|_{L^{2}}^{3/4}\nonumber\\
&\leq &c|\lambda|^{3/4}\|\mathbf{w}\|^{3/4}c\|\kappa_1\Delta \mathbf{u} +\beta\vartheta\|_{L^{2}}^{1/4}+c\|\kappa_1\Delta \mathbf{u} +\beta\vartheta\|_{L^{2}}^{1/4}\|f_3\|_{L^{2}}^{3/4}\nonumber\\
&\leq &c|\lambda|^{3/4}\|\mathbf{w}\|^{3/4}\mathfrak{E}^{1/2}+c\mathfrak{E}^{1/4}\|f_3\|_{L^{2}}^{3/4}.\nonumber\
\end{eqnarray}
By the symmetry of $\mathbf{u}$ we have 
$\|D^2 \mathbf{u}\|_{L^2(\partial\Omega_1)}\leq c\|\kappa_1\Delta \mathbf{u} +\beta\vartheta\|_{L^2(\partial\Omega_1)}$ and
 \begin{eqnarray*}
\left\|D^2 \mathbf{u}\right\|_{L^2(\partial\Omega_1)}
&\leq&  c|\lambda|^{1/4}\mathfrak{E}^{3/4}\|\mathbf{w}\|^{1/4}+c\mathfrak{E}^{3/4}\|f_3\|^{1/4}.\nonumber 
\end{eqnarray*}
Similarly
 \begin{eqnarray*}
\left\|\nabla\mathbf{u}\right\|_{L^2(\partial\Omega_1)}
&\leq&  \frac {c}{|\lambda|^{1/4}}\mathfrak{E}+ \frac {c}{|\lambda|}\left\| \mathbf{w}\right\|_{L^{2}(\Omega_1)}^{1/4}\left\| \Delta f_1\right\|_{L^{2}(\Omega_1)}^{3/4}.\nonumber
\end{eqnarray*}
And finally 
 \begin{eqnarray*}
\left\|\mathbf{w}\right\|_{L^2(\partial\Omega_1)}
&\leq&  c|\lambda|^{1/4}\left\|\mathbf{w}\right\|_{L^2(\Omega_1)}^{3/4}\left\|\Delta \mathbf{u}\right\|_{L^{2}(\Omega_1)}^{1/4}+c\left\|\mathbf{w}\right\|_{L^2(\Omega_1)}^{3/4}\left\|\Delta f_1\right\|_{L^{2}(\Omega_1)}^{1/4}\nonumber,
\end{eqnarray*}

for $\lambda$ large. 

\end{proof}

\begin{lemma} \label{utheta}
Over the thermoelastic part, we get
\begin{eqnarray*}
	\int_{\Omega_1}\left| \Delta  \mathbf{u}\right|^2 d\Omega&\leq&
	\frac{c}{|\lambda|^{1/2}}\mathfrak{R}_0\mathfrak{E}+\frac{c}{|\lambda|^{1/4}}\mathfrak{R}_0\mathfrak{E}.\label{Delta}
\end{eqnarray*}
\end{lemma}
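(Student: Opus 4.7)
The plan is to use identity \eqref{eta} to isolate $\Delta\mathbf{u}$, take the $L^2$ inner product with $\overline{\Delta\mathbf{u}}$, and estimate each resulting term carefully. Rewriting \eqref{eta} as
\[
\frac{\beta}{a_n}\Delta\mathbf{u} \;=\; \Theta_n \;-\; \operatorname{div}\mathcal{L} \;+\; \frac{1}{i\lambda}\mathcal{R},
\]
with $\mathcal{L}=\frac{1}{i\lambda a_n}\sum_{j=0}^{n}b_j\nabla\Theta_j$ from \eqref{lll} and $\mathcal{R}$ gathering the remaining lower-order contributions ($\mu\Theta_n$, $\frac{1}{a_n}\sum_{j=0}^{n-1}a_j\Theta_j$, $g_{n+1}$, $\frac{\beta}{a_n}\Delta f_1$), the inner product with $\overline{\Delta\mathbf{u}}$ in $L^2(\Omega_1)$ gives
\[
\frac{\beta}{a_n}\int_{\Omega_1}|\Delta\mathbf{u}|^2\,d\Omega \;=\; \int_{\Omega_1}\Theta_n\overline{\Delta\mathbf{u}}\,d\Omega \;-\; \int_{\Omega_1}\operatorname{div}\mathcal{L}\;\overline{\Delta\mathbf{u}}\,d\Omega \;+\; \text{(lower-order remainder)}.
\]
I would then bound the three contributions on the right in turn.

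The first contribution is the simpler one: by Cauchy--Schwarz $|\int\Theta_n\overline{\Delta\mathbf{u}}|\le\|\Theta_n\|\|\Delta\mathbf{u}\|$, and taking the square root of \eqref{thetp} from Lemma~\ref{uthetapp}, combined with the elementary inequality $\|\mathbf{w}\|^{1/2}\|\Delta\mathbf{u}\|^{1/2}\le\mathfrak{E}$, this reduces to a bound of order $\frac{c}{|\lambda|^{1/4}}\mathfrak{R}_0^{1/2}\mathfrak{E}^{1/2}\|\Delta\mathbf{u}\|+\frac{c}{|\lambda|^{1/2}}\mathfrak{R}\|\Delta\mathbf{u}\|$. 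Young's inequality with a small parameter then absorbs a fraction of $\|\Delta\mathbf{u}\|^2$ into the left-hand side and leaves a contribution of order $\frac{c}{|\lambda|^{1/2}}\mathfrak{R}_0\mathfrak{E}$; the $\frac{1}{i\lambda}\mathcal{R}$ terms are handled identically and produce contributions of even higher negative order in $|\lambda|$.

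The divergence term is the heart of the argument. An integration by parts, using $\Delta\mathbf{u}=0$ on $\Gamma_0$, yields
\[
-\int_{\Omega_1}\operatorname{div}\mathcal{L}\;\overline{\Delta\mathbf{u}}\,d\Omega \;=\; \int_{\Omega_1}\mathcal{L}\cdot\overline{\nabla\Delta\mathbf{u}}\,d\Omega \;-\; \int_{\Gamma_1}(\mathcal{L}\cdot\nu)\,\overline{\Delta\mathbf{u}}\,d\Gamma.
\]
The volume integral is controlled by $\|\mathcal{L}\|\,\|\nabla\Delta\mathbf{u}\|$: estimate \eqref{Ll2} gives $\|\mathcal{L}\|\le\frac{c}{|\lambda|}\mathfrak{R}$, while the second estimate of Lemma~\ref{Nnll} bounds $\|\nabla\Delta\mathbf{u}\|$ (through the gradient of $\kappa_1\Delta\mathbf{u}+\beta\vartheta$) by $c|\lambda|^{1/2}\|\mathbf{w}\|^{1/2}\mathfrak{E}^{1/2}$ plus lower-order pieces, producing altogether a $\frac{c}{|\lambda|^{1/2}}\mathfrak{R}_0\mathfrak{E}$ contribution. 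The boundary integral is the slowest piece: combining the trace bound of Lemma~\ref{etanTT} (of the form $\|\mathcal{L}\|_{\Gamma_1}\le\frac{c}{|\lambda|^{1/2}}\mathfrak{R}_0^{1/2}\mathfrak{E}^{1/2}+\frac{c}{|\lambda|}\|F\|_{\mathcal{H}}$) with the trace estimate \eqref{segunda} of Lemma~\ref{Nnll} for $\|D^2\mathbf{u}\|_{L^2(\Gamma_1)}$ (which grows like $|\lambda|^{1/4}\mathfrak{E}^{3/4}\|\mathbf{w}\|^{1/4}$), the product is, after Young absorption, of order $\frac{c}{|\lambda|^{1/4}}\mathfrak{R}_0\mathfrak{E}$, which is exactly the slower rate appearing in the claim.

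The main obstacle is this boundary integral over $\Gamma_1$: unlike on $\Gamma_0$, $\Delta\mathbf{u}$ does not vanish on the transmission interface, so the usual boundary-vanishing trick is unavailable. One therefore has to pay a $|\lambda|^{1/4}$ factor coming from the radial trace interpolation inequality \eqref{TrazH2} of Lemma~\ref{TrazoInter}, compensated by the $|\lambda|^{-1/2}$ gain in the trace of $\mathcal{L}$ provided by Lemma~\ref{etanTT}. Once all contributions are collected and a final Young's inequality is applied to absorb any residual $\|\Delta\mathbf{u}\|^2$ on the right-hand side, the stated estimate follows.
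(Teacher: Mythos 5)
Your argument is correct and follows essentially the same route as the paper: both multiply \eqref{eta} by $\overline{\Delta\mathbf{u}}$, absorb the lower-order terms via Lemma \ref{uthetapp} and Young's inequality, and treat the critical term $\frac{1}{i\lambda a_n}\sum_{j}b_j\Delta\Theta_j$ by integration by parts, controlling the resulting interface contribution on $\Gamma_1$ with the trace bounds of Lemmas \ref{etanTT} and \ref{Nnll}. The only (immaterial) difference is that you integrate by parts once, leaving the volume term $\int_{\Omega_1}\mathcal{L}\cdot\overline{\nabla\Delta\mathbf{u}}\,d\Omega$ estimated through Lemma \ref{Nnll}, whereas the paper moves both derivatives onto $\mathbf{u}$ and uses \eqref{f33} to replace $\Delta^2\mathbf{u}$; both yield the same $|\lambda|^{-1/2}$ and $|\lambda|^{-1/4}$ rates.
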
	 
\begin{proof}
Multiplying the equation \eqref{eta} by $\overline{\Delta  \mathbf{u}}$ and integration on $\Omega_1$   we see
\begin{align}
\int_{\Omega_1}\dfrac{\beta}{a_n}\left|\Delta   \mathbf{u}\right| ^2 \ d\Omega=  \underbrace{\int_{\Omega_1}\Theta_n\overline{\Delta  \mathbf{u}} \ d\Omega-\dfrac{1}{a_n i\lambda}\int_{\Omega_1}\sum_{j=0}^{n-1}a_j\Theta_{j}\overline{\Delta \mathbf{u}} \ d\Omega-\dfrac{1}{i\lambda }\int_{\Omega_1}g_{n+1}\overline{\Delta  \mathbf{u}} \ d\Omega}_{\leq c\|\Theta_n\|^2+\epsilon\|\Delta \mathbf{u}\|^2+\frac{c}{|\lambda|^2}\mathfrak{R}^2} \nonumber\\-\underbrace{\dfrac{1}{a_n i\lambda}\int_{\Omega_1}\Delta f_1\overline{\Delta  \mathbf{u}} \ d\Omega}_{\leq \epsilon\|\Delta \mathbf{u}\|^2+\frac{1}{|\lambda|^2}\|F\|_{\mathcal{H}}^2}-\underbrace{\dfrac{1}{a_n i \lambda}\int_{\Omega_1}\sum_{j=0}^{n}b_j\Delta \Theta_j\overline{\Delta  \mathbf{u}} \ d\Omega}_{I_2}+\dfrac{\mu}{i\lambda}\int_{\Omega_1}\Theta_n\overline{\Delta \mathbf{u}}\ d \Omega.\label{deltau2}
\end{align}
Using Green's Formula and Lemma \ref{etanTT} it follows that
\begin{eqnarray}
	\left|I_2 \right| &=&\left| \dfrac{1}{a_n i \lambda}\int_{\Omega_1}\sum_{j=0}^{n}b_j\Theta_j\overline{\Delta^2  \mathbf{u}} \ d\Omega+\dfrac{1}{a_n i \lambda}\int_{\Gamma_0}\sum_{j=0}^{n}b_j\frac{\partial \Theta_j}{\partial\nu}\overline{\Delta \mathbf{u}} \ d\Gamma\right|\nonumber\\
	&\leq &c\|\Theta\|\|\mathbf{w}\|+\frac{c}{|\lambda|^{1/4}}\mathfrak{R}_0\mathfrak{E}. \label{i22}
\end{eqnarray}

Finally, replacing  \eqref{i22} into \eqref{deltau2} and taking the real part we get
\begin{align*}
	\int_{\Omega_1} \left| \Delta \mathbf{u}\right| ^2 d\Omega\leq 	\frac{c}{|\lambda|^{1/4}}\mathfrak{R}_0\mathfrak{E} 	 +c\|\Theta_n\|\|\mathbf{w}\|+
	\dfrac{c}{|\lambda|}\|U\|_{\mathcal{H}}\|F\|_{\mathcal{H}}. 
\end{align*}
So our conclusion follows. 
\end{proof}

\begin{lemma}\label{lew}
	Over the thermoelastic part, we have
	\begin{align*}
		\int_{\Omega_1}|\mathbf{w}|^2+\kappa_1 |\Delta\mathbf{u}|^2+ \left|\vartheta\right|^2   \ d\Omega\leq \frac{c_\epsilon}{|\lambda|}\|U\|_{\mathcal{H}}\|F\|_{\mathcal{H}} +\frac{c_\epsilon}{|\lambda|^2}\|F\|_{\mathcal{H}}^2.
	\end{align*}
	
\end{lemma}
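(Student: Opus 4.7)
The plan is to derive an equipartition-type identity on the thermoelastic domain $\Omega_1$ by testing \eqref{f33} against $\overline{\mathbf{u}}$, then to combine the resulting estimate with Lemma \ref{utheta} and Lemma \ref{uthetapp} while absorbing every error term through Young's inequality.

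The first step is to multiply \eqref{f33} by $\overline{\mathbf{u}}$, integrate over $\Omega_1$, and integrate by parts twice. Since $\mathbf{u}$ vanishes on $\Gamma_0$, where $\Phi := \kappa_1\Delta\mathbf{u}+\beta\vartheta$ also vanishes, the bulk piece becomes $\kappa_1\|\Delta\mathbf{u}\|^2+\beta(\vartheta,\Delta\mathbf{u})$ and the boundary contribution reduces to an interfacial term
\[
BT \;=\; \int_{\Gamma_1}\partial_\nu\Phi\,\overline{\mathbf{u}}\,d\Gamma\;-\;\int_{\Gamma_1}\Phi\,\partial_\nu\overline{\mathbf{u}}\,d\Gamma.
\]
Using \eqref{f11} to rewrite $(\mathbf{w},\mathbf{u})=(i\lambda+\mu)\|\mathbf{u}\|^2-(f_1,\mathbf{u})$ together with $|i\lambda+\mu|^2\|\mathbf{u}\|^2=\|\mathbf{w}\|^2+2\,\mbox{Re}(\mathbf{w},f_1)+\|f_1\|^2$, the real part of $i\lambda(\mathbf{w},\mathbf{u})$ equals $-\|\mathbf{w}\|^2$ up to errors of order $c\|F\|_{\mathcal{H}}\|U\|_{\mathcal{H}}$ and $O(|\lambda|^{-1})$ corrections. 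Rearranging produces the equipartition bound
\[
\|\mathbf{w}\|^2 \;\le\; \kappa_1\|\Delta\mathbf{u}\|^2 + |\beta|\,\|\vartheta\|\,\|\Delta\mathbf{u}\| + |BT| + c\,\|F\|_{\mathcal{H}}\|U\|_{\mathcal{H}}.
\]

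Next I would reduce $\|\vartheta\|^2$ to $\|\Theta_n\|^2$: iterating the recursion $\Theta_k=(\Theta_{k+1}+g_k)/(i\lambda+\mu)$ from \eqref{gnn} yields $\|\Theta_k\|\le c|\lambda|^{k-n}\|\Theta_n\|+c|\lambda|^{-1}\|F\|_{\mathcal{H}}$ for $k<n$, hence $\|\vartheta\|^2\le c\|\Theta_n\|^2+c|\lambda|^{-2}\|F\|_{\mathcal{H}}^2$; together with Lemma \ref{uthetapp} and the elementary bound $\|\mathbf{w}\|^{1/2}\|\Delta\mathbf{u}\|^{1/2}\le c\mathfrak{E}$ this gives $\|\vartheta\|^2\le c|\lambda|^{-1/2}\mathfrak{E}\mathfrak{R}_0+c|\lambda|^{-1}\mathfrak{R}^2$, already in a form suitable for absorption. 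For the interfacial term $BT$, I would invoke the transmission conditions and the radial-trace/elliptic-regularity bounds of Lemma \ref{TrazoInter} to get $\|\mathbf{u}\|_{\Gamma_1}, \|\partial_\nu\mathbf{u}\|_{\Gamma_1}\le c\|\Delta\mathbf{u}\|$, while Lemma \ref{Nnll} controls the traces of $\Phi$ and $\nabla\Phi$ on $\Gamma_1$ by positive powers of $|\lambda|$ multiplied by $\|\mathbf{w}\|^\alpha\mathfrak{E}^{1-\alpha}$ with $\alpha\in\{1/4,3/4\}$. Substituting the sharp bound $\|\Delta\mathbf{u}\|^2\le c|\lambda|^{-1/4}\mathfrak{R}_0\mathfrak{E}$ of Lemma \ref{utheta} and carefully balancing exponents via Young's inequality should collapse $|BT|$ to $\epsilon\mathfrak{E}^2+c_\epsilon|\lambda|^{-1}\mathfrak{R}^2$.

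Assembling everything, each summand on the left-hand side $\|\mathbf{w}\|^2+\kappa_1\|\Delta\mathbf{u}\|^2+\|\vartheta\|^2$ is now bounded by $\epsilon\mathfrak{E}^2+c_\epsilon|\lambda|^{-1}\mathfrak{R}^2$, so choosing $\epsilon$ small enough, absorbing into the left-hand side and expanding $\mathfrak{R}^2=\|U\|_{\mathcal{H}}\|F\|_{\mathcal{H}}+\|F\|_{\mathcal{H}}^2$ yields the claim. The hardest part is the boundary contribution $BT$: the trace estimates of Lemma \ref{Nnll} carry positive powers of $|\lambda|$, and only the delicate interplay between Young's inequality and the $|\lambda|^{-1/4}$-decay of $\|\Delta\mathbf{u}\|^2$ from Lemma \ref{utheta} allows those $|\lambda|$-powers to cancel, ultimately producing a bound absorbable into $\mathfrak{E}^2$ with the required $|\lambda|^{-1}$ prefactor on the right.
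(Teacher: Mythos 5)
Your proposal follows essentially the same route as the paper: the paper tests \eqref{f33} against $\frac{1}{i\lambda}\overline{\mathbf{w}}$, which by \eqref{f11} coincides with your multiplier $\overline{\mathbf{u}}$ up to lower-order terms, then controls the resulting boundary integrals with the trace bounds of Lemmas \ref{TrazoInter} and \ref{Nnll}, and closes the estimate via Lemmas \ref{utheta} and \ref{uthetapp}, exactly as you do. The only noteworthy (and harmless) difference is that you locate the surviving boundary terms on the interface $\Gamma_1$, where the transmission conditions act, whereas the paper writes them over $\Gamma_0$.
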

\begin{proof}
	We multiply \eqref{f33} by $\frac{1}{i\lambda}\overline{\mathbf{w}}$ and using \eqref{f11} 
	\begin{align*}
		\int_{\Omega_1}	 |\mathbf{w}	|^2  d\Omega+\frac{1}{i\lambda}\int_{\Omega_1} \mu |\mathbf{w}|^2d\Omega+\frac{1}{i\lambda}\int_{\Omega_1}\Delta(\kappa_1\Delta \mathbf{u}+\beta\vartheta)\overline{ \mathbf{w}} \ d \Omega=\frac{1}{i\lambda}\int_{\Omega_1} f_3\overline{\mathbf{w}} \ d \Omega.
	\end{align*}
	Using Green theorem 
	\begin{align}
		\int_{\Omega_1} |\mathbf{w}|^2 d\Omega&=\underbrace{\frac{1}{i\lambda}\int_{\Gamma_0}\left( \kappa_1\frac{\partial \Delta  \mathbf{u}}{\partial\nu} +\beta\frac{\partial\vartheta}{\partial\nu}\right) \overline{ \mathbf{w}} \ d \Gamma}_{:=I_1} -\underbrace{\int_{\Gamma_0}\frac{1}{i\lambda}\left( \kappa_1\Delta  \mathbf{u} +\beta\vartheta\right) \overline{ \frac{\partial\mathbf{w}}{\partial\nu}} \ d \Gamma}_{:=I_2} 
		-\frac{1}{i\lambda}\int_{\Omega_1}\beta \vartheta \overline{\Delta \mathbf{w}} d\Omega\nonumber\\
		&+\int_{\Omega_1}\kappa_1|\Delta \mathbf{u}|^2d\Omega-\frac{1}{i\lambda}\int_{\Omega_1} f_3\overline{\mathbf{w}} \ d \Omega -\frac{1}{i\lambda}\int_{\Omega_1} \mu  |\mathbf{w}|^2d\Omega. \label{acw}
	\end{align}
	From inequalities \eqref{tercera} and \eqref{cero} we get
	\begin{eqnarray*}
		\left|I_1\right|
		&\leq&\frac{1}{|\lambda|}\int_{\Gamma_0}\left| \kappa_1\frac{\partial \Delta  \mathbf{u}}{\partial\nu}+\beta\frac{\partial\vartheta}{\partial\nu}\right| |\overline{ \mathbf{w}}| \ d \Gamma\\
		&\leq &\frac{c}{|\lambda|}\left(|\lambda|^{3/4}\|\mathbf{w}\|^{3/4}\mathfrak{E}^{1/4}+\mathfrak{E}^{1/4}\|f_3\|_{L^{2}}^{3/4}\right)
		\left( c|\lambda|^{1/4}\|\mathbf{w}\|^{3/4}\|\Delta \mathbf{u}\|^{1/4}+c\|\mathbf{w}\|^{3/4}\|\Delta f_3\|^{1/4}\right) \\
		&\leq & c\mathfrak{E}^{1/4}\|\mathbf{w}\|^{3/2}\|\Delta \mathbf{u}\|^{1/4}+\frac{c}{|\lambda|^{3/4}}\mathfrak{E}^{1/4}\|f_3\|^{3/4}\|\mathbf{w}\|^{3/4}\|\Delta \mathbf{u}\|^{1/4}\\
		&&+\frac{c}{|\lambda|^{1/4}}\mathfrak{E}^{1/4}\|\mathbf{w}\|^{3/2}\|\Delta f_3\|^{1/4}+\frac{c}{|\lambda|}\mathfrak{E}^{1/4}\|f_3\|_{L^{2}}^{3/4}\|\mathbf{w}\|^{3/4}\|\Delta f_3\|^{1/4}\\
		&\leq & c \mathfrak{E}^2+\frac 12 \|\mathbf{w}\|^{2}+\frac 1{2c} \|\Delta \mathbf{u}\|^{2}+\frac{c}{|\lambda|^2}\|F\|_{\mathcal{H}}^2.
	\end{eqnarray*}
	Similarly,  
	\begin{eqnarray*}
		\left|I_2\right|
		&\leq&\| \Delta  \mathbf{u}\|_{L^{2} (\partial\Omega)}\|\frac{\partial  \mathbf{w}}{\partial\nu}\|_{L^{2} (\partial\Omega)}\\
		&\leq&\frac{c}{|\lambda|^{1/4}}\left(c|\lambda|^{1/4}\mathfrak{E}^{1/4}\|\mathbf{w}\|^{1/4}\left\|\Delta \mathbf{u}\right\|_{L^2(\Omega_1)}^{1/2}+c\mathfrak{E}^{3/4}\|f_3\|^{1/4}\right)\left(\left\| \mathbf{w}\right\|_{L^{2}(\Omega_1)}^{1/4}\left\| \Delta\mathbf{u}\right\|_{L^{2}(\Omega_1)}^{3/4}\right).
	\end{eqnarray*}
	Using similar arguments we get 
	\begin{eqnarray*}
		\left|I_2\right|
		&\leq&  c \mathfrak{E}^2+\frac 12 \|\mathbf{w}\|^{2}+\frac 1{2c} \|\Delta \mathbf{u}\|^{2}+\frac{c}{|\lambda|^2}\mathfrak{R}^2.
	\end{eqnarray*}
	Inserting the above inequalities into \eqref{acw} we find
	
	\begin{eqnarray*}
		\int_{\Omega_1}	 |\mathbf{w}|^2 d\Omega&\leq&c \mathfrak{E}^2+\frac 1{2c} \|\Delta \mathbf{u}\|^{2}+\frac{c}{|\lambda|^2}\mathfrak{R}^2 .
	\end{eqnarray*}
Now,	using Lemma \ref{uthetapp}  and Lemma \ref{utheta} our conclusion follows. 
	
\end{proof}

\begin{lemma}\label{estff} Under the above conditions we have 
\begin{eqnarray}\label{fz1}
	\left|\int_{\Omega _2}  \mathbf{z}\mathbf{q}.\overline{ \nabla f_2}  d\Omega \right| &\leq &
	\dfrac{c}{\left|\lambda \right|^{1/2} } \left\|F \right\|_{\mathcal{H}}^2+\epsilon \left\|U \right\|_{\mathcal{H}},\\
		\left| \int_{\Omega _2} f_4\mathbf{q}.\overline{ \nabla \mathbf{v}} \ d\Omega\right| &\leq &
		 \dfrac{c}{\left| \lambda\right| } \left\|F \right\|_{\mathcal{H}}^2+\epsilon \left\|U \right\| \label{fz2}^2_{\mathcal{H}} .
\end{eqnarray}
\end{lemma}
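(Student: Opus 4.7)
\textbf{Proof plan for Lemma \ref{estff}.}
Both estimates are bounds on forcing--multiplier pairings on the elastic region $\Omega_2$, and the strategy is the same in each case: (i) Cauchy--Schwarz to separate the $F$-contribution from the $U$-contribution; (ii) the intermediate derivative inequality of Proposition~\ref{p8} to interpolate between $L^2$ and $H^2$ norms and so replace an $H^1$-piece by a product of an $L^2$- and an $H^2$-piece; (iii) substitution from the resolvent identity \eqref{f22} to trade an $L^2$-norm for a factor $|\lambda|^{-1}$ at the cost of picking up $\|U\|_{\mathcal H}+\|F\|_{\mathcal H}$; and (iv) Young's inequality to split the resulting products into an absorbable $\epsilon\|U\|_{\mathcal H}^{\alpha}$ piece and a $|\lambda|$-weighted $\|F\|_{\mathcal H}^2$ remainder.

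For the first inequality I would start with Cauchy--Schwarz, $|I_1|\le c\|\mathbf z\|_{L^2(\Omega_2)}\|\nabla f_2\|_{L^2(\Omega_2)}$. Since $(f_1,f_2)\in \mathds H^2_\Gamma$ one has $\|\Delta f_2\|_{L^2}\le \|F\|_{\mathcal H}$, and Proposition~\ref{p8} yields $\|\nabla f_2\|\le c\|f_2\|^{1/2}\|\Delta f_2\|^{1/2}\le c\|F\|_{\mathcal H}$. To produce the factor $|\lambda|^{-1/2}$ I invoke \eqref{f22} in the form $\mathbf v=(i\lambda+\mu)^{-1}(\mathbf z+f_2)$, giving $\|\mathbf v\|\le c|\lambda|^{-1}(\|U\|_{\mathcal H}+\|F\|_{\mathcal H})$; interpolated with $\|\Delta\mathbf v\|\le c\|U\|_{\mathcal H}$ via Proposition~\ref{p8} this provides the control $\|\nabla\mathbf v\|\le c|\lambda|^{-1/2}(\|U\|_{\mathcal H}+\|F\|_{\mathcal H})^{1/2}\|U\|_{\mathcal H}^{1/2}$, and rewriting $\mathbf z=(i\lambda+\mu)\mathbf v-f_2$ in $I_1$ (integrating by parts if needed to move the gradient onto $\mathbf v$) then introduces the extra $|\lambda|$ that couples with these estimates to produce the desired weight.

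For the second inequality the argument is cleaner. Cauchy--Schwarz gives $|I_2|\le c\|f_4\|_{L^2(\Omega_2)}\|\nabla\mathbf v\|_{L^2(\Omega_2)}$, and the same interpolation yields $\|\nabla\mathbf v\|\le c|\lambda|^{-1/2}(\|U\|_{\mathcal H}+\|F\|_{\mathcal H})^{1/2}\|U\|_{\mathcal H}^{1/2}$. Combined with $\|f_4\|\le\|F\|_{\mathcal H}$ one obtains $|I_2|\le c|\lambda|^{-1/2}\|F\|_{\mathcal H}\,(\|U\|_{\mathcal H}+\|F\|_{\mathcal H})^{1/2}\|U\|_{\mathcal H}^{1/2}$. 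Splitting into the two pieces $|\lambda|^{-1/2}\|F\|_{\mathcal H}\|U\|_{\mathcal H}$ and $|\lambda|^{-1/2}\|F\|_{\mathcal H}^{3/2}\|U\|_{\mathcal H}^{1/2}$ and applying Young's inequality to each (with exponents $(2,2)$ for the first and $(4/3,4)$ for the second) produces the claimed form $c|\lambda|^{-1}\|F\|_{\mathcal H}^2+\epsilon\|U\|_{\mathcal H}^2$.

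The main obstacle I expect is the arithmetic bookkeeping in the Young step: the negative powers of $|\lambda|$ must line up precisely, which constrains the admissible exponents and is what dictates the particular choice $(4/3,4)$ in the second estimate. A secondary technical point is to justify the $L^2$ Poincar\'e-type bound $\|f_2\|\le c\|\Delta f_2\|$ within the transmission space $\mathds H^2_\Gamma$ so that the interpolation of $\|\nabla f_2\|$ really yields $c\|F\|_{\mathcal H}$; this follows from the equivalence of norms on $\mathds H^2_\Gamma$ built into the phase space $\mathcal H$. Once these weights are aligned, the two estimates follow routinely.
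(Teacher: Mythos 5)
Your plan for the second estimate \eqref{fz2} is essentially the paper's: both arguments trade $\nabla\mathbf v$ for a factor $|\lambda|^{-1/2}$ by combining the resolvent identity \eqref{f22} with the interpolation $\|\nabla\mathbf v\|\le c\|\mathbf v\|^{1/2}\|\Delta\mathbf v\|^{1/2}$ (the paper interpolates $\nabla\mathbf z$ instead and then divides by $i\lambda$, which is equivalent), followed by Young's inequality. One bookkeeping slip: the Young step with exponents $(4/3,4)$ applied to $|\lambda|^{-1/2}\|F\|_{\mathcal H}^{3/2}\|U\|_{\mathcal H}^{1/2}$ yields $c|\lambda|^{-2/3}\|F\|_{\mathcal H}^2$, not $c|\lambda|^{-1}\|F\|_{\mathcal H}^2$; that is weaker than the stated \eqref{fz2}, though still sufficient for Lemma \ref{elas}, which only requires the weight $|\lambda|^{-1/2}$.

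The plan for the first estimate \eqref{fz1} has a genuine gap. No substitution based on \eqref{f22} can produce the weight $|\lambda|^{-1/2}$ in front of $\|F\|_{\mathcal H}^2$: from \eqref{f22} one has $\mathbf z=(i\lambda+\mu)\mathbf v-f_2$, so the extra factor $|\lambda|$ exactly cancels the decay $\|\mathbf v\|\le c|\lambda|^{-1}(\|U\|_{\mathcal H}+\|F\|_{\mathcal H})$, and your route terminates at $c\|U\|_{\mathcal H}\|F\|_{\mathcal H}\le\epsilon\|U\|_{\mathcal H}^2+c_\epsilon\|F\|_{\mathcal H}^2$ with no negative power of $\lambda$ on the $\|F\|_{\mathcal H}^2$ term; moving the gradient onto $\mathbf v$ only makes matters worse, since $\|\nabla\mathbf v\|$ decays like $|\lambda|^{-1/2}$ while the prefactor is $|\lambda|$. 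This is fatal downstream: without the $|\lambda|^{-1/2}$ weight, Lemma \ref{elas} and Theorem \ref{Maintheorem} would yield only boundedness of the resolvent, not the decay $|\lambda|^{1/4}\|(i\lambda-\mathcal A)^{-1}\|\le C$ that gives Gevrey class $4$. The paper's proof instead goes through the \emph{other} resolvent equation \eqref{f44}: writing $\mathbf z=\frac1{i\lambda}(f_4-\mu\mathbf z-\kappa_2\Delta^2\mathbf v)$ introduces the prefactor $|\lambda|^{-1}$ at the cost of a $\Delta^2\mathbf v$ term, which is then integrated by parts; the resulting boundary integral is controlled via the transmission conditions and the trace estimate \eqref{tercera} of Lemma \ref{Nnll}, and the volume term via $\|\nabla\Delta\mathbf v\|\le c\|\Delta\mathbf v\|^{1/2}\|\Delta^2\mathbf v\|^{1/2}$ together with $\Delta^2\mathbf v=\kappa_2^{-1}(f_4-(i\lambda+\mu)\mathbf z)$, which costs only $|\lambda|^{1/2}$ and leaves the net gain $|\lambda|^{-1/2}$. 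That passage through the fourth-order equation, the boundary terms, and the radial trace lemma is the idea missing from your proposal.
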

\begin{proof}
Using the same procedure as in \eqref{estxxx} we get
\begin{align}\label{zv1}
\left\|\nabla \mathbf{z} \right\|\leq \left\| \mathbf{z}\right\|^{1/2}\left\|\Delta \mathbf{z} \right\|^{1/2} \leq c  \left\| \mathbf{z}\right\|^{1/2}\left\| i\lambda \Delta \mathbf{v}+\mu \Delta \mathbf{v}+\Delta f_2 \right\|^{1/2} \nonumber \\
\leq c\left|\lambda \right|^{1/2}\|U \| _{\mathcal{H}}+c\mathfrak{R}^2.
\end{align}
Using \eqref{zv1} and taking the real part  we get
\begin{align*}
\left|	\int_{\Omega _2}  \mathbf{q} f_4\overline{\nabla \mathbf{v}} \ d\Omega\right|&=\left|\dfrac{1}{i\lambda}\int_{\Omega _2}  f_4\mathbf{q}.\left( \nabla \mathbf{z}+\nabla f_2-\mu \nabla \mathbf{v}\right)  \ d\Omega\right|
	\leq \dfrac{c}{\left| \lambda\right|^{1/2} }\|U \| _{\mathcal{H}}\|F \| _{\mathcal{H}}+\dfrac{c}{| \lambda| }\|F \| _{\mathcal{H}}^2,
\end{align*}
for $\lambda$ large. So, we use \eqref{f44} to find
\begin{align*}
\left|	\int_{\Omega _2}  \mathbf{q} f_4\overline{\nabla \mathbf{v}} \ d\Omega\right|&\leq \dfrac{c}{\left| \lambda\right| } \left\|F \right\|_{\mathcal{H}}^2+\epsilon \left\|U \right\|^2_{\mathcal{H}} .
\end{align*}
So we get \eqref{fz1}. Finally, using \eqref{f22} and taking the real part
\begin{align*}
\inte{}{\Omega_2} \mathbf{z}\mathbf{q}.\overline{\nabla f_2} \ d\Omega &=\dfrac{1}{i\lambda}\inte{}{\Omega_2} i \lambda   \mathbf{z} \mathbf{q}.\overline{\nabla f_2} \ d\Omega 
	= \dfrac{1}{i\lambda}\inte{}{\Omega_2} (f_4-\mu \mathbf{z}-\kappa_2 \Delta^2 \mathbf{v}) \mathbf{q}.\overline{\nabla f_2} \ d\Omega \\
	&=\underbrace{\dfrac{1}{i\lambda}\inte{}{\Omega_2} (f_4-\mu \mathbf{z}) \mathbf{q}\cdot\overline{\nabla f_2} \ d\Omega}_{\leq \frac{c}{|\lambda|}\mathfrak{R}}-\underbrace{\dfrac{1}{i\lambda}\int_{\Gamma_0}\kappa_2\dfrac{\partial \Delta \mathbf{v}}{\partial \nu}\mathbf{q}.\overline{\nabla f_2} \ d\Omega}_{=I_2}\\
	& + \underbrace{\dfrac{1}{i\lambda}\int_{\Omega_2} \kappa_2\nabla \Delta \mathbf{v}\cdot \nabla (\mathbf{q}.\overline{\nabla f_2} )\ d\Omega}_{=I_3}
\end{align*}
Using the transmission conditions \eqref{tercera} we have
\begin{align*}
\left|I_2 \right|\leq \dfrac{c}{\left| \lambda\right|^{1/4}  }\|U\|_{\mathcal{H}}\|F\|_{\mathcal{H}}+\dfrac{c}{\left| \lambda\right|^{5/4}  }\|F\|_{\mathcal{H}}.
\end{align*}
Finally, using the symmetry we get
$$
\left|I_3\right|\leq  \frac{c}{| \lambda|} \|\nabla \Delta \mathbf{v}\|\|F\|\leq \frac{c}{| \lambda|} \|\Delta v\|^{1/2}\|\Delta^2 v\|^{1/2}\|F\|\leq \frac{c}{| \lambda|} \|\Delta v\|^{1/2}\|i\lambda z+f_4\|^{1/2}\|F\|_{\mathcal{H}}.
$$
So we have 
$$
\left|I_3\right|\leq   \frac{c}{| \lambda|^{1/2}} 
\|\Delta v\|^{1/2}\| z\|^{1/2}\|F\|+\frac{c}{| \lambda|} \|\Delta v\|^{1/2}\| f_4\|^{1/2}\|F\|_{\mathcal{H}}.
$$
Finally we arrive to
\begin{align*}
		\left|\inte{}{\Omega_2} \mathbf{z}\mathbf{q}.\overline{\nabla f_2}  \ d\Omega \right| \leq \epsilon \|U\|_{\mathcal{H}}+\dfrac{c}{\left| \lambda\right| }\|F\|_{\mathcal{H}},
\end{align*}
for $\lambda$ large. Our conclusion follows. 
\end{proof}

\begin{lemma} \label{lem3.2} Let us denote by $q_k$ a first order polynomial.  
	Under the above notations we have 
	\begin{eqnarray*}
		\int_{\Omega}\Delta^2\varphi\cdot \left(q_k \frac{\partial\varphi}{\partial x_k}\right) d\Omega
		&=&\int_{\partial\Omega}\frac{\partial\Delta\varphi}{\partial\nu}
\cdot\left( q_k\frac{\partial\varphi} {\partial x_k}\right) d\Gamma 
		+\frac 12 \int_{\partial\Omega}\mathbf{q}\cdot\nu |\Delta\varphi|^2d\Gamma 
		+\int_{\partial\Omega}\Delta\varphi \frac{\partial\varphi}{\partial\nu} d\Gamma\\
		&&
		-\int_{\partial\Omega} (\Delta\varphi)\cdot  q_k\left(\frac{\partial\nabla\varphi} {\partial x_k}\cdot\nu\right) d\Gamma+\int_{\Omega} |\Delta\varphi |^2 d\Omega. \\
	\end{eqnarray*}
	
\end{lemma}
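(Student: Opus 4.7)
The plan is to establish this identity as a Rellich-Pohozaev style calculation obtained by integrating by parts twice to transfer the biharmonic operator off $\varphi$, then exploiting the fact that $q_k$ is a first-order polynomial (hence its second derivatives vanish and its first derivatives are constants) to reduce the resulting bulk integrals to the claimed form.

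First, I would apply Green's formula to move one Laplacian:
\begin{equation*}
\int_{\Omega}\Delta^2\varphi \cdot \Bigl(q_k\frac{\partial\varphi}{\partial x_k}\Bigr)\,d\Omega
= \int_{\partial\Omega}\frac{\partial\Delta\varphi}{\partial\nu}\Bigl(q_k\frac{\partial\varphi}{\partial x_k}\Bigr)d\Gamma
-\int_{\Omega}\nabla\Delta\varphi\cdot\nabla\Bigl(q_k\frac{\partial\varphi}{\partial x_k}\Bigr)d\Omega,
\end{equation*}
and then integrate by parts once more, producing a boundary term $-\int_{\partial\Omega}\Delta\varphi\,(\nabla v\cdot\nu)\,d\Gamma$ with $v=q_k\partial_k\varphi$, plus the bulk term $+\int_\Omega \Delta\varphi\cdot\Delta v\,d\Omega$. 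Expanding
$\nabla v\cdot\nu = (\partial_j q_k)\nu_j\,\partial_k\varphi + q_k(\partial_k\nabla\varphi\cdot\nu)$
and using that $\partial_j q_k$ is constant (and in the canonical case $\partial_j q_k=\delta_{jk}$, corresponding to $\mathbf{q}=x$ centered appropriately) yields the third and fourth boundary terms of the statement after combining the $\partial\varphi/\partial\nu$ contribution with the explicit expression $q_k\partial_k\nabla\varphi\cdot\nu$.

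The key algebraic step is the treatment of $\int_\Omega \Delta\varphi\,\Delta v\,d\Omega$. Since $q_k$ is a first-order polynomial, one computes
\begin{equation*}
\Delta v = \Delta(q_k\partial_k\varphi) = 2\,(\partial_j q_k)\,\partial_j\partial_k\varphi + q_k\,\partial_k\Delta\varphi,
\end{equation*}
because $\Delta q_k = 0$. The first piece integrates to a multiple of $\int_\Omega |\Delta\varphi|^2\,d\Omega$ (with constant coming from $\partial_j q_k$), while the second piece is handled by the Pohozaev trick
\begin{equation*}
\int_\Omega \Delta\varphi\, q_k\,\partial_k\Delta\varphi\,d\Omega
= \tfrac{1}{2}\int_\Omega q_k\,\partial_k |\Delta\varphi|^2\,d\Omega
= \tfrac{1}{2}\int_{\partial\Omega} \mathbf{q}\cdot\nu\,|\Delta\varphi|^2\,d\Gamma - \tfrac{1}{2}\int_\Omega (\mathrm{div}\,\mathbf{q})\,|\Delta\varphi|^2\,d\Omega,
\end{equation*}
again using integration by parts and the constancy of $\mathrm{div}\,\mathbf{q}$.

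Finally I would assemble all pieces: the two $\int_\Omega |\Delta\varphi|^2$ contributions (one from the $(\partial_j q_k)\partial_j\partial_k\varphi$ term and one from $-\tfrac12(\mathrm{div}\,\mathbf{q})\int|\Delta\varphi|^2$) combine, with the normalization implicit in the choice of $\mathbf{q}$, into the single $+\int_\Omega|\Delta\varphi|^2\,d\Omega$ of the identity, while the surface term $\tfrac12\int_{\partial\Omega}\mathbf{q}\cdot\nu\,|\Delta\varphi|^2\,d\Gamma$ gives the second boundary contribution of the statement. The main obstacle is bookkeeping: carefully tracking signs and the numerical constants produced by $\partial_j q_k$ and $\mathrm{div}\,\mathbf{q}$ (which depend on the precise form of the first-order polynomial $q_k$ and the ambient dimension), so that the $|\Delta\varphi|^2$ coefficients add up to exactly $1$; all other steps are routine Green-type integrations. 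No result beyond standard integration by parts is needed.
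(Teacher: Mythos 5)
Your proposal is correct and follows essentially the same route as the paper: one Green identity to peel off a Laplacian, a second integration by parts of $\int_\Omega\nabla\Delta\varphi\cdot\nabla(q_k\partial_k\varphi)\,d\Omega$, expansion using $\partial_j q_k=\delta_{jk}$ and $\Delta q_k=0$, and the Pohozaev trick on $\int_\Omega\Delta\varphi\,q_k\partial_k\Delta\varphi\,d\Omega$, with the same dimension-dependent bookkeeping collapsing the bulk terms to the single $\int_\Omega|\Delta\varphi|^2\,d\Omega$. Note only that both your computation and the paper's own proof produce the boundary term $-\int_{\partial\Omega}\Delta\varphi\,\frac{\partial\varphi}{\partial\nu}\,d\Gamma$ with a minus sign, whereas the lemma statement displays it with a plus; this is a typo in the statement (harmless in the application, where that term is only estimated in absolute value), not a gap in your argument.
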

\newcommand{\dis}{\displaystyle}
\begin{proof} 
Using integration by parts we have 
$$
\begin{array}{r c l}
&& \dis\int_{\Omega}\Delta^2\varphi\cdot \left( q_k\frac{\partial\varphi} {\partial x_k}\right) d\Omega =
\dis \int_{\partial\Omega}\frac{\partial\Delta\varphi}{\partial\nu}
\cdot\left( q_k\frac{\partial\varphi} {\partial x_k}\right) d\Gamma\underbrace{-\int_{\Omega}\nabla(\Delta\varphi) \nabla\cdot\left( q_k\frac{\partial\varphi} {\partial x_k}\right) d\Omega}_{I}.
\end{array}
$$
On the other hand 

\begin{eqnarray*} 
 I &=&
  -\int_{\Omega}\nabla(\Delta\varphi) \cdot\left(\nabla \varphi+q_k\frac{\partial\nabla \varphi} {\partial x_k}\right) d\Omega\\
& = & \dis-\int_{\partial\Omega}\Delta\varphi \frac{\partial\varphi}{\partial\nu} d\Gamma
+\int_{\Omega} |\Delta\varphi |^2 d\Omega -\int_{\partial\Omega} (\Delta\varphi)\cdot  q_k\left(\frac{\partial\nabla\varphi} {\partial x_k}\cdot\nu\right) d\Gamma\\
&&\dis +\int_{\Omega}\Delta\varphi (\nabla q_k)\cdot\left(\frac{\partial\nabla\varphi} {\partial x_k}\right) d\Omega
+\int_{\Omega}\Delta\varphi q_k\left(\frac{\partial\Delta\varphi} {\partial x_k}\right) d\Omega.
\end{eqnarray*}

\noindent Note  that 
\begin{align*}
\int_{\Omega}\Delta\varphi (\nabla q_k)\cdot\left(\frac{\partial\nabla\varphi} {\partial x_k}\right) d\Omega&=\int_{\Omega}|\Delta\varphi |^2 d\Omega .\\
\int_{\Omega}\Delta\varphi q_k\left(\frac{\partial\Delta\varphi} {\partial x_k}\right) d\Omega&=\frac 12 \int_{\Omega}q_k\frac{\partial|\Delta\varphi|^2} {\partial x_k} d\Omega.
\end{align*}

From where our conclusion follows.
\end{proof}

\begin{lemma} \label{elas}
For the problem over $\Omega_2$ we have that the following estimate
	\begin{align*}
	\int_{\Omega_2 } \left|\mathbf{z} \right| ^2+ \kappa_2\left| \Delta \mathbf{v} \right| ^2 \ d\Omega \leq \epsilon \left\|U \right\|_{\mathcal{H}}^2+\dfrac{c\left\|F \right\|^2_{\mathcal{H}} }{\left|\lambda \right|^{1/2} },
	\end{align*}
is satisfied.
\end{lemma}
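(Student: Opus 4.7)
My plan is to apply a Rellich-type multiplier on the elastic equation \eqref{f44} over $\Omega_2$ and convert every resulting boundary integral on the interface $\Gamma_1$ into thermoelastic-side quantities that are already controlled by Lemmas \ref{Nnll} and \ref{lew}. Concretely, I would fix a smooth vector field $q$ on $\overline{\Omega}_2$ with $\mathrm{div}\, q \equiv c_0>0$ (for instance $q(x)=x-x_0$ with $x_0$ chosen so that $q\cdot\nu$ has a definite sign on $\Gamma_1$), multiply \eqref{f44} by $\overline{q\cdot\nabla\mathbf{v}}$, and integrate over $\Omega_2$. The biharmonic term is handled by Lemma \ref{lem3.2}, producing $\kappa_2\int_{\Omega_2}|\Delta\mathbf{v}|^2\,d\Omega$ plus four boundary integrals over $\Gamma_1$ involving $\Delta\mathbf{v}$, $\partial_\nu\Delta\mathbf{v}$, $\nabla\mathbf{v}$ and $\partial_k\partial_\nu\mathbf{v}$.

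For the inertial term $i\lambda\int_{\Omega_2}\mathbf{z}\,\overline{q\cdot\nabla\mathbf{v}}\,d\Omega$, I would use \eqref{f22} to rewrite $i\lambda\overline{\nabla\mathbf{v}}$ as $-\overline{\nabla\mathbf{z}}+\mu\overline{\nabla\mathbf{v}}-\overline{\nabla f_2}$. Integration by parts on the leading piece $-\int\mathbf{z}\,q\cdot\overline{\nabla\mathbf{z}}\,d\Omega$ then yields, after taking real parts, $\tfrac{c_0}{2}\int_{\Omega_2}|\mathbf{z}|^2\,d\Omega$ plus a boundary contribution $-\tfrac12\int_{\Gamma_1}(q\cdot\nu)|\mathbf{z}|^2\,d\Gamma$. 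The cross terms $\int\mathbf{z}\,q\cdot\overline{\nabla f_2}\,d\Omega$ and the right-hand side $\int f_4\,\overline{q\cdot\nabla\mathbf{v}}\,d\Omega$ are bounded by \eqref{fz1} and \eqref{fz2} of Lemma \ref{estff}. Combining these pieces gives a differential-form identity in which the interior part $\int_{\Omega_2}|\mathbf{z}|^2+\kappa_2|\Delta\mathbf{v}|^2\,d\Omega$ appears with positive weight on the left, modulo integrals over $\Gamma_1$ and controllable forcing terms.

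The remaining task, and the genuinely delicate step, is the interface. Using the transmission conditions \eqref{trans1}-\eqref{trans2}, each trace on the elastic side is replaced by its thermoelastic counterpart: $\kappa_2\Delta\mathbf{v}$ becomes $\kappa_1\Delta\mathbf{u}+\beta\vartheta$, $\partial_\nu\Delta\mathbf{v}$ picks up $\beta\partial_\nu\vartheta$, $\nabla\mathbf{v}$ becomes $\nabla\mathbf{u}$, and $\mathbf{z}$ becomes $\mathbf{w}$. The four trace estimates \eqref{tercera}-\eqref{cero} of Lemma \ref{Nnll} then bound these traces by mixed fractional powers of $|\lambda|$, $\|\mathbf{w}\|$, $\|\Delta\mathbf{u}\|$ and $\mathfrak{E}$, while Lemma \ref{lew} supplies $\mathfrak{E}^{2}\leq c|\lambda|^{-1}(\|U\|_{\mathcal{H}}\|F\|_{\mathcal{H}}+|\lambda|^{-1}\|F\|_{\mathcal{H}}^{2})$. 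The main obstacle is the Young-inequality bookkeeping: every boundary contribution carries its own mixture of fractional weights, and one must verify that, once the $\mathfrak{E}$-bound is inserted and a term $\epsilon\|U\|_{\mathcal{H}}^{2}$ is absorbed on the left, each remainder is weighted by at most $|\lambda|^{-1/2}\|F\|_{\mathcal{H}}^{2}$ — precisely the rate required for the Gevrey class $4$ conclusion. The radial-symmetry assumption is essential here because the sharper trace inequalities \eqref{TrazH2}-\eqref{TrazH3} on which Lemma \ref{Nnll} rests hold only in that subclass.
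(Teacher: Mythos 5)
Your proposal follows essentially the same route as the paper: the Rellich multiplier $q\cdot\nabla\mathbf{v}$ with $\mathbf{q}=(x_1,x_2)$ applied to \eqref{f44}, Lemma \ref{lem3.2} for the biharmonic term, Lemma \ref{estff} for the volume forcing terms, and the transmission conditions together with the radial trace estimates of Lemmas \ref{TrazoInter} and \ref{Nnll} (fed by Lemma \ref{lew}) to absorb the interface integrals into $\epsilon\|U\|_{\mathcal{H}}^2+c|\lambda|^{-1/2}\|F\|_{\mathcal{H}}^2$. The argument is correct and matches the paper's proof in all essential respects.
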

\begin{proof}
	Multiplying \eqref{f44} by $q_k\overline{\dfrac{\partial \mathbf{v}}{\partial x_k}}$, with $\mathbf{q}=(x_1,x_2)$ and using Lemma \ref{lem3.2} we have
		\begin{eqnarray*}
	 \int_{\Omega_2 }  |\mathbf{z}|^2+\kappa_2 |\Delta \mathbf{v}|^2\ d\Omega&=&\underbrace{	\int_{\Omega_2 } f_4 q_k\overline{\frac{\partial \mathbf{v}}{\partial x_k}} \ d\Omega+ \int_{\Omega_2 }\mathbf{z}q_k\overline{\frac{\partial f_2}{\partial x_k}} d\Omega}_{=I_3}
	 +\int_{\partial \Omega_2 }  q_k\nu_k |\mathbf{z}|^2 d\Gamma\\
	 &&+\underbrace{\int_{\partial\Omega_2}\frac{\partial\Delta \mathbf{v}}{\partial\nu}
\cdot\left( q_k\frac{\partial \mathbf{v}} {\partial x_k}\right) d\Gamma }_{\leq c\|\nabla^3 \mathbf{v}\|_{L^2(\partial\Omega)}\|\nabla\mathbf{v}\|_{L^2(\partial\Omega)}}
		+ \underbrace{\frac 12\int_{\partial\Omega_2}q\cdot\nu |\Delta \mathbf{v}|^2d\Gamma }_{\leq c\|\nabla^2 \mathbf{v}\|_{L^2(\partial\Omega)}^2}\\
		&&+\underbrace{\int_{\partial\Omega_2}\Delta \mathbf{v} \frac{\partial \mathbf{v}}{\partial\nu} d\Gamma }_{\leq c\|\nabla^2 \mathbf{v}\|_{L^2(\partial\Omega)}\|\nabla \mathbf{v}\|_{L^2(\partial\Omega)}}		-\underbrace{\int_{\partial\Omega_2} (\Delta \mathbf{v})\cdot  q_k\left(\frac{\partial\nabla \mathbf{v}} {\partial x_k}\cdot\nu\right) d\Gamma }_{\leq c\|\nabla^2 \mathbf{v}\|_{L^2(\partial\Omega)}^2}.
	\end{eqnarray*}
\noindent
Note that the volume integral in the equation above are bounded because of Lemma \ref{estff}
$$
\left|I_3\right|\leq 
\dfrac{c}{\left|\lambda \right|^{1/2} } \left\|F \right\|_{\mathcal{H}}^2+\epsilon \left\|U \right\|_{\mathcal{H}}.
$$
Using the transmission conditions  and the symmetry Lemma \ref{TrazoInter}
\begin{align*}
\|\nabla^3 \mathbf{v}\|_{L^2(\partial\Omega)}\|\nabla \mathbf{v}\|_{L^2(\partial\Omega)}\leq 
\dfrac{c}{\left|\lambda \right|^{3/8} } \left\|F \right\|_{\mathcal{H}}^2+\epsilon \left\|U \right\|_{\mathcal{H}}.\\
\|\nabla^2 \mathbf{v}\|_{L^2(\partial\Omega)}^2
\leq \dfrac{c}{\left|\lambda \right|^{3/8} } \left\|F \right\|_{\mathcal{H}}^2+\epsilon \left\|U \right\|_{\mathcal{H}}.\\
\left| \int_{\partial\Omega_2}\Delta \mathbf{v} \frac{\partial \mathbf{v}}{\partial\nu} d\Gamma\right| 
+
\left| \int_{\partial\Omega_2} (\Delta \mathbf{v})\cdot  q_k\left(\frac{\partial\nabla \mathbf{v}} {\partial x_k}\cdot\nu\right) d\Gamma\right| \leq \dfrac{c}{\left|\lambda \right|^{3/8} } \left\|F \right\|_{\mathcal{H}}^2+\epsilon \left\|U \right\|_{\mathcal{H}}.
\end{align*}
Therefore,
\begin{align*}
	\int_{\Omega _2}\kappa_2 \left|\Delta \mathbf{v} \right| ^2+\rho_2 \left| \mathbf{z}\right| ^2 \ d\Omega\leq \epsilon \left\|U \right\|_{\mathcal{H}}^2+\dfrac{c\left\|F \right\|^2_{\mathcal{H}} }{\left|\lambda \right|^{1/2} }.
\end{align*}
\end{proof}
\begin{theorem}\label{Maintheorem}
	The phase-lag thermoelastic system \eqref{Lb1}-\eqref{Lb33xx} is of $4$-Gevrey's class to radial solutions. 
\end{theorem}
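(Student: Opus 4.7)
The plan is to invoke Theorem \ref{theoGe} with the exponent $\varsigma = 1/4$, which yields the Gevrey class $1/\varsigma = 4$ for $S(t)$. Since $\mathcal{A} = \mathfrak{B} + 2c_0 I$ and the Gevrey class is preserved under bounded perturbations of the generator ($e^{\mathcal{A}t} = e^{2c_0 t}\,e^{\mathfrak{B}t}$), it suffices to establish the required properties for $\mathfrak{B}$. Two conditions must be checked: (i) $i\mathbb{R}\subset\varrho(\mathfrak{B})$, and (ii) the uniform bound
$$\sup_{|\lambda|\geq 1}|\lambda|^{1/4}\,\|(i\lambda I-\mathfrak{B})^{-1}\|_{\mathcal{L}(\mathcal{H}_R)} < \infty.$$
Condition (i) follows via a carbon copy of the contradiction argument used in Lemma \ref{iRcR}: any normalized sequence $U_n$ with $F_n := i\lambda_n U_n-\mathfrak{B}U_n\to 0$ will be forced to $U_n\to 0$ by the resolvent estimates produced in the previous lemmas, contradicting $\|U_n\|_{\mathcal{H}}=1$.

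For the resolvent bound, I would combine the ingredients already collected. Writing the resolvent equation in components \eqref{f11}--\eqref{gn} and exploiting the radial invariance on $D_R(\mathcal{A})$, Lemma \ref{lew} controls the thermoelastic block by
$$\int_{\Omega_1}\bigl(|\mathbf{w}|^2+\kappa_1|\Delta\mathbf{u}|^2+|\vartheta|^2\bigr)\,d\Omega \leq \frac{c_\epsilon}{|\lambda|}\|U\|_{\mathcal{H}}\|F\|_{\mathcal{H}}+\frac{c_\epsilon}{|\lambda|^2}\|F\|_{\mathcal{H}}^2,$$
while Lemma \ref{elas} handles the elastic block by
$$\int_{\Omega_2}\bigl(|\mathbf{z}|^2+\kappa_2|\Delta\mathbf{v}|^2\bigr)\,d\Omega \leq \epsilon\|U\|_{\mathcal{H}}^2+\frac{c}{|\lambda|^{1/2}}\|F\|_{\mathcal{H}}^2.$$
The intermediate temperature components $\Theta_0,\ldots,\Theta_{n-1}$ are treated algebraically: \eqref{gnn} gives the recursion $\Theta_k=(i\lambda+\mu)^{-1}(\Theta_{k+1}+g_k)$, which iterated produces
$$\sum_{j=0}^{n-1}\|\nabla\Theta_j\|^2 \leq \frac{c}{|\lambda|^2}\|\nabla\Theta_n\|^2+\frac{c}{|\lambda|^2}\|F\|_{\mathcal{H}}^2,$$
and $\|\nabla\Theta_n\|^2\leq c\|U\|_{\mathcal{H}}\|F\|_{\mathcal{H}}$ follows directly from taking the real part of $\langle i\lambda U-\mathfrak{B}U,U\rangle=\langle F,U\rangle$ together with \eqref{nondis1}. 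Matching these three contributions against the definition of $\|U\|_{\mathcal{H}}^2$ yields
$$\|U\|_{\mathcal{H}}^2 \leq 2\epsilon\|U\|_{\mathcal{H}}^2 + \frac{c}{|\lambda|^{1/2}}\|F\|_{\mathcal{H}}^2 + \frac{c}{|\lambda|}\|U\|_{\mathcal{H}}\|F\|_{\mathcal{H}},$$
and a Young-type absorption of the mixed term, followed by choosing $\epsilon$ small, delivers $\|U\|_{\mathcal{H}}^2\leq c|\lambda|^{-1/2}\|F\|_{\mathcal{H}}^2$, i.e., $|\lambda|^{1/4}\|U\|_{\mathcal{H}}\leq c\|F\|_{\mathcal{H}}$, which is precisely condition (ii).

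The main obstacle is the elastic region $\Omega_2$, where no intrinsic dissipation is available: regularization must propagate from $\Omega_1$ through the transmission conditions \eqref{trans1}--\eqref{trans2} on $\Gamma_1$. The best decay extractable there is $|\lambda|^{-1/2}$ instead of the $|\lambda|^{-1}$ enjoyed by the thermoelastic block, and this is precisely what drags $\varsigma$ down to $1/4$ and produces the class $4$. The critical technical step lies in Lemma \ref{elas}: the multiplier $q_k\partial_{x_k}\mathbf{v}$ converts the $L^2(\Omega_2)$--energy into boundary integrals on $\Gamma_1$, which are then absorbed using the trace--interpolation estimates of Lemma \ref{TrazoInter}, the boundary estimates of Lemma \ref{Nnll} for $\nabla(\kappa_1\Delta\mathbf{u}+\beta\vartheta)$, $D^2\mathbf{u}$, $D\mathbf{w}$ and $\mathbf{w}$, and crucially the radial symmetry, which reduces traces of $H^2$ radial functions to pointwise values and thus produces the sharp \eqref{TrazH2}--\eqref{TrazH3} inequalities. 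Without the radial symmetry assumption the boundary terms on $\Gamma_1$ would resist absorption, so it is precisely this structural hypothesis that makes the scheme close and the Gevrey class $4$ attainable.
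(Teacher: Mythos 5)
Your proposal is correct and follows essentially the same route as the paper: the paper's proof of Theorem \ref{Maintheorem} likewise combines Lemma \ref{lew} for the thermoelastic block with Lemma \ref{elas} for the elastic block, absorbs the $\epsilon\|U\|_{\mathcal{H}}^2$ term, and reads off $\|U\|_{\mathcal{H}}^2\leq c|\lambda|^{-1/2}\|F\|_{\mathcal{H}}^2$, i.e.\ $\varsigma=1/4$ in Theorem \ref{theoGe}. Your write-up is in fact slightly more complete than the paper's, since you make explicit the treatment of the intermediate components $\Theta_0,\dots,\Theta_{n-1}$ via the recursion from \eqref{gnn} and the verification that $i\mathbb{R}\subset\varrho(\mathfrak{B})$, both of which the paper leaves implicit.
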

\begin{proof}
	Using Lemma \ref{elas} and Lemma \ref{lew} over the elastic component and thermoelastic component
	\begin{align}
		\int_{\Omega_1}|\mathbf{w}|^2+\kappa_1 |\Delta \mathbf{u}|^2+ c\left|\vartheta\right|^2   \ d\Omega&\leq \frac{c_\epsilon}{|\lambda|}\|U\|_{\mathcal{H}}\|F\|_{\mathcal{H}} +\frac{c_\epsilon}{|\lambda|^2}\|F\|_{\mathcal{H}}^2\label{dos}\\
	\int_{\Omega_2 } \left|\mathbf{z}\right| ^2+ \kappa_2\left| \Delta \mathbf{v} \right| ^2 \ d\Omega &\leq \epsilon \left\|U \right\|_{\mathcal{H}}^2+\dfrac{c\left\|F \right\|^2_{\mathcal{H}} }{\left|\lambda \right|^{1/2} }.\label{tres}
	\end{align}
	From  \eqref{dos}, \eqref{tres} we get 
	\begin{align*}
		\|U\|_{\mathcal{H}}^{2} \leq  c\epsilon\|U\|_{\mathcal{H}}^{2}+\frac c{|\lambda|^{1/2}}\|F \|_\mathcal{H}^2.
	\end{align*}
	Taking $\epsilon$ small our conclusion follows. 
\end{proof}

\section*{Acknowledgments} 

Jaime Muñoz Rivera was supported by CNPq project 307947/2022-0  and Fondecyt project 1230914.

\section*{Conflict of interest}	
This work does not have any conflicts of interest.

\end{document}